\documentclass{amsart}
\usepackage{amsfonts}
\usepackage{amsmath}
\usepackage{amssymb}
\usepackage{amscd}
\usepackage{amstext}
\usepackage{amsthm}
\usepackage{mathrsfs}
\usepackage{enumerate}
\usepackage{color}

\theoremstyle{plain}
\newtheorem{lem}{Lemma}[section]
\newtheorem{prop}[lem]{Proposition}
\newtheorem{thm}[lem]{Theorem}
\newtheorem{cor}[lem]{Corollary}
\newtheorem{fact}[lem]{Fact}

\newtheorem{mainthm}{Theorem}

\theoremstyle{definition}
\newtheorem{defn}[lem]{Definition}

\theoremstyle{remark}

\newtheorem{rem}[lem]{Remark}

\begin{document}
\title[A diameter bound for Sasaki manifolds]
{A diameter bound for Sasaki manifolds \\ with application to uniqueness for Sasaki-Einstein structure}
\author{Yasufumi Nitta \and Ken'ichi Sekiya}
\subjclass[2000]{Primary~53C55, Secondary~53D10}
\address{Department of Mathematics, Graduate school of science, Osaka University. 1-1 Machikaneyama, Toyonaka, Osaka 560-0043 Japan.}
\address{Department of Mathematics, Graduate school of science, Osaka University. 1-1 Machikaneyama, Toyonaka, Osaka 560-0043 Japan.}
\date{}
\maketitle
\thispagestyle{empty}

\begin{abstract}
In this paper we give a diameter bound for Sasaki manifolds with positive transverse Ricci curvature. 
As an application, we obtain the uniqueness of
Sasaki-Einstein metrics on compact Sasaki manifolds modulo the action
of the identity component of the automorphism group for the transverse holomorphic structure. 
\end{abstract}

%%%%%%%%%%%%%%%%%%%%%%%%%%%%%%%%%%%%%%%%%%%%%%%%%%%%%%%%%%%%%%%%%%%%%%%%%%%%%%%
\section{Introduction}
A {\it Sasaki manifold} is a Riemannian manifold $(S, g)$ whose cone metric $\bar g = dr^{2} + r^{2}g$ on $C(S) = S \times \mathbb{R}_{+}$ is K${\rm \ddot a}$hler. 
Then Sasakian geometry sits naturally in two aspects of K${\rm \ddot a}$hler geometry, since for one thing, $(S, g)$ is the base of the {\bf cone manifold} $(C(S), \bar g)$ which is K${\rm \ddot a}$hler, and for another thing any Sasaki manifold is contact, and the one dimensional foliation associated to the characteristic Reeb vector field admits a {\bf transverse K${\rm \bf \ddot a}$hler structure}.

The main purpose of this paper is to prove a Myers' type theorem for Sasaki manifolds and give a diameter bound for complete Sasaki manifolds with positive transverse Ricci curvature. 
Our main result is stated as follows. 

\begin{mainthm}\label{Myers2}
Let $(S, g)$ be a $(2n + 1)$ dimensional complete Sasaki manifold with Sasakian structure $\mathcal{S} = \{g, \xi, \eta, \Phi\}$. 
Suppose $Ric^{T} \geq \tau g^{T}$ for some constant $\tau > 0$. 
Then 
\begin{equation*}
{\rm diam}(S, g) \leq 2\pi \sqrt{\frac{2n - 1}{\tau}}. 
\end{equation*}
\end{mainthm}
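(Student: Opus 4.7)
The plan is to adapt the classical Bonnet--Myers argument via the second variation of arc length, arranging the test vector fields along a minimizing geodesic so that only the transverse Ricci curvature enters the estimate. First I would fix a unit-speed minimizing geodesic $\gamma:[0,L]\to S$ realizing the diameter of $S$, and assume for contradiction that $L>2\pi\sqrt{(2n-1)/\tau}$. A standard computation based on the Sasakian identity $\nabla_X\xi=-\Phi X$ shows that the Reeb component $c:=\eta(\dot\gamma(t))$ is constant along $\gamma$, so one can decompose $\dot\gamma=c\xi+H$ with $H\in\ker\eta$ of constant length $\sqrt{1-c^{2}}$; the extreme case $c=\pm1$, in which $\gamma$ is a Reeb orbit, would be handled separately.

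Next I would construct $2n-1$ test variations along $\gamma$. Choose a pointwise orthonormal frame $E_{1},\dots,E_{2n-1}$ for the orthogonal complement of $\mathrm{span}\{\xi,H\}$ inside the contact distribution, obtained by parallel transport in $S$ modified by an explicit $\Phi$-correction that keeps each $E_{i}$ orthogonal to $\xi$ and $H$. Setting $V_{i}(t)=f(t)E_{i}(t)$ with $f$ vanishing at the endpoints, minimality forces the summed index form
\[
\sum_{i=1}^{2n-1} I(V_{i},V_{i})=\sum_{i}\int_{0}^{L}\bigl(|\nabla_{\dot\gamma}V_{i}|^{2}-g(R(V_{i},\dot\gamma)\dot\gamma,V_{i})\bigr)\,dt
\]
to be nonnegative. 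Using the identity relating the Riemann tensor of $g$ to the transverse Riemann tensor $R^{T}$ (which differ by terms of Boothby--Wang type involving $\eta$ and $\Phi$), the summed curvature integrand rewrites as $f^{2}\,\mathrm{Ric}^{T}(H,H)$ plus lower-order corrections in $c$, and the hypothesis then gives $\mathrm{Ric}^{T}(H,H)\ge\tau(1-c^{2})$.

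With the standard choice $f(t)=\sin(\pi t/L)$, the index form inequality reduces to a scalar inequality of schematic shape
\[
(2n-1)\frac{\pi^{2}}{2L}\ge \frac{L}{2}\,\tau(1-c^{2})+(\text{corrections in }c),
\]
which after absorbing the correction terms and optimizing over $c\in[-1,1]$ should yield $L\le 2\pi\sqrt{(2n-1)/\tau}$. The factor $2\pi$ rather than the naive $\pi$ reflects the degeneration of the horizontal test variations as $|c|\to 1$; a probably cleaner implementation is to instead take $f$ vanishing additionally at the midpoint $t=L/2$, effectively applying a Myers-type estimate on each half-geodesic and picking up the extra factor $2$ in that way.

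The main obstacle will be the careful bookkeeping of the Sasakian correction terms, both in $|\nabla_{\dot\gamma}V_{i}|^{2}$ (coming from the non-parallelism of the $\Phi$-corrected frame) and in the curvature term (coming from the Riemann-to-transverse-Riemann comparison). Ensuring that after summing over $i$ these corrections either cancel or are uniformly dominated by the main $\tau(1-c^{2})$ contribution, so that the estimate closes uniformly in $c$, is the delicate step and is what ultimately pins down the constant. The degenerate case $c=\pm1$, where $H=0$ and the horizontal test variations are useless, would have to be treated by a direct argument showing that positivity of $\mathrm{Ric}^{T}$ precludes arbitrarily long minimizing Reeb-orbit segments.
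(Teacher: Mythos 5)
Your plan is a genuinely different route from the paper's, and it has a gap that is not mere bookkeeping. The paper does not use Riemannian minimizing geodesics at all: it proves the stronger bound ${\rm diam}(S,d_{D})\leq 2\pi\sqrt{(2n-1)/\tau}$ for the Carnot--Carath\'eodory distance of the sub-Riemannian structure $(S,D,g_{D})$, using a length-minimizing \emph{normal geodesic} (which exists by Strichartz's Hopf--Rinow theorem because $D$ is strong bracket generating) and the second variation of the energy over \emph{horizontal} curves with admissible variation fields; the Riemannian statement then follows from $d\leq d_{D}$. The whole point of this detour is that a minimizing normal geodesic is automatically horizontal, i.e.\ your Reeb component $c=\eta(\dot\gamma)$ is identically zero. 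In your setup $c$ is an uncontrolled constant in $[-1,1]$, and the hypothesis $Ric^{T}\geq\tau g^{T}$ feeds only $\tau(1-c^{2})$ into the index form. Your own schematic inequality then yields $L\lesssim 2\pi\sqrt{(2n-1)/\big(\tau(1-c^{2})\big)}$, which blows up as $|c|\to 1$. No rearrangement of the $2n-1$ horizontal test fields can repair this, because the hypothesis simply carries no information about curvature in planes close to containing $\xi$ beyond the universal sectional curvature $1$ of $\xi$-planes; to close the estimate for intermediate $c$ you would have to exploit that positivity quantitatively for \emph{all} $c$, not only in the degenerate case $c=\pm 1$, and even an optimal interpolation of the two mechanisms produces a constant strictly worse than $2\pi\sqrt{(2n-1)/\tau}$. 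So the step you defer as ``delicate'' is precisely where the approach breaks down.

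Two further concrete corrections. First, the Sasakian correction terms do \emph{not} cancel in your frame: for a $\Phi$-corrected (transversally parallel) frame of $H^{\perp}\cap D$, the direction $\Phi H$ contributes an extra $+f^{2}|H|^{2}$ to $\sum_{i}|\nabla_{\dot\gamma}V_{i}|^{2}$, while the Riemann-to-transverse comparison contributes $-3|H|^{2}$ to the summed curvature, so the naive effective curvature is $\mathrm{Ric}^{T}(H,H)-4|H|^{2}$, which is useless for small $\tau$. The paper handles the $\Phi\dot\gamma$-direction with a separate two-component field $V=h\,\Phi(\dot\gamma)+k\,\xi$ subject to the admissibility constraint $k^{\prime}=2h$, under which all the extra terms cancel exactly. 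Second, your explanation of the factor $2\pi$ is not the right one: it has nothing to do with degeneration as $|c|\to 1$ or with forcing $f$ to vanish at the midpoint per se. It arises because the constraint $k^{\prime}=2h$ together with $k(0)=k(l)=0$ forces $\int_{0}^{l}h\,dt=0$, so the lowest admissible mode in the $\Phi\dot\gamma$-direction is $\sin(2\pi t/l)$ rather than $\sin(\pi t/l)$; the same profile is then used in the remaining $2(n-1)$ directions for uniformity. If you want to salvage a classical-index-form argument, you must first explain why you may assume the minimizing curve is horizontal --- and that is exactly what passing to $d_{D}$ and normal geodesics accomplishes.
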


As an application of Theorem A, we have uniqueness of Sasaki-Einstein metrics up to the action of the identity component of the automorphism group for the transverse holomorphic structure. 
For toric cases, the uniqueness of Sasaki-Einstein metrics was recently obtained by Cho, Futaki and Ono \cite{CFO} by showing that the argument of Guan \cite{G} is valid also for the space of K${\rm \ddot a}$hler potentials for the transverse K${\rm \ddot a}$hler structure. 

In this paper, we shall prove such uniqueness without toric assumption by applying Theorem A and the argument of Bando and Mabuchi in \cite{BM}. 
\begin{mainthm}\label{Maintheorem}
Let $(S, g)$ be a compact Sasaki manifold with Sasakian structure $\mathcal{S} = \{g, \xi, \eta, \Phi\}$. 
Assume that the set $\mathscr{E}$ of all Sasaki-Einstein metrics which is compatible with $g$ is non-empty. 
Then the identity component of the automorphism group for the transverse holomorphic structure acts transitively on $\mathscr{E}$. 
\end{mainthm}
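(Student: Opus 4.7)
The plan is to adapt the continuity method of Bando and Mabuchi \cite{BM} to the transverse K\"ahler setting, with Theorem A playing the role that Myers' theorem plays in the original K\"ahler--Einstein argument. First, given $g_0, g_1 \in \mathscr{E}$, compatibility with $\mathcal{S}$ forces both metrics to share the Reeb field $\xi$ and the transverse holomorphic structure, so their transverse K\"ahler forms $\omega_0^T, \omega_1^T$ lie in the same basic $(1,1)$-class and differ by $\sqrt{-1}\partial_B\bar\partial_B\varphi_1$ for some smooth basic function $\varphi_1$. I would then introduce the transverse Aubin continuity family
\begin{equation*}
(\omega_0^T + \sqrt{-1}\partial_B\bar\partial_B \varphi_t)^n = e^{h_0 - t\tau\varphi_t}\,(\omega_0^T)^n,\qquad t\in [0,1],
\end{equation*}
where $h_0$ is a transverse Ricci potential for $g_0$ and $\tau>0$ is the transverse Einstein constant. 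Solutions at $t=1$ correspond precisely to transverse K\"ahler--Einstein potentials, that is, to the members of $\mathscr{E}$ written relative to $g_0$.

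The goal is then to show that the set $T\subset [0,1]$ of parameters admitting a smooth basic solution is open, closed, and contains a neighborhood of $t=1$ whose $t=1$ limits sweep out the orbit of $g_0$ under the identity component of the transverse automorphism group. For openness I would linearize the equation: at a solution the linearization reduces to $\Delta_B + t\tau$ acting on basic functions, and a transverse Matsushima-type decomposition identifies the kernel at $t=1$ with the space of basic holomorphy potentials for transversely holomorphic vector fields. An implicit function theorem modulo this kernel should produce, for $t$ near $1$, a family of solutions whose variation is generated by the flow of such a vector field, matching the Bando--Mabuchi picture.

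Closedness is where Theorem A is used decisively. Along the family one obtains a uniform lower bound of the form $\mathrm{Ric}^T(g_t) \geq c\,\tau\,g_t^T$ whenever $t$ stays away from $0$, and Theorem A then furnishes a uniform diameter bound on $(S,g_t)$ independent of $t$. This controls the Sobolev and Poincar\'e constants on basic functions and drives the Moser iteration for the $C^0$ a priori estimate of $\varphi_t$; standard Yau-type second-order and Calabi $C^3$ estimates, adapted to the transverse setting, then give uniform $C^{2,\alpha}$ bounds. The step I expect to be the main obstacle is precisely verifying that these analytic ingredients---the basic Sobolev inequality, the Moser iteration restricted to basic functions, and the transverse Calabi estimates---transfer cleanly from the K\"ahler setting and depend on the Sasakian geometry only through the diameter controlled by Theorem A. Once this technology is in place, the Bando--Mabuchi reversibility argument rejoins $g_1$ to $g_0$ along the path via an element of the identity component of the transverse automorphism group, establishing the desired transitivity.
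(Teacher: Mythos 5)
Your outline follows the same route as the paper --- the Bando--Mabuchi continuity method for the generalized Aubin equation in the transverse K\"ahler setting, with Theorem A supplying the diameter bound for the $C^{0}$ estimate --- but it omits the two ingredients on which that method actually turns. First, the diameter bound alone does not close the a priori estimate. What Theorem A yields (through the Green function lower bound of Bando--Mabuchi applied to $g_{\varphi_t}$, not through Sobolev constants and Moser iteration) is an inequality of the shape ${\rm osc}_{S}\varphi_{t} \leq I_{\eta}(\varphi_{t}) + C/t$, and one still needs a bound on $I_{\eta}(\varphi_{t})$ that is uniform as $t$ decreases. In the paper this comes from the monotonicity formula $\frac{d}{dt}M_{\eta}(\varphi_{t}) = -(1-t)(2n+2)\frac{d}{dt}\left( I_{\eta}(\varphi_{t}) - J_{\eta}(\varphi_{t})\right) \leq 0$, which forces $I_{\eta}(\varphi_{t}) \leq (n+1)\left( I_{\eta}(\varphi_{\tau}) - J_{\eta}(\varphi_{\tau})\right)$ for $t \leq \tau$. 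You never introduce the functionals $I_{\eta}$, $J_{\eta}$, $M_{\eta}$, so your closedness argument has no uniform quantity to start from.

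Second, and more seriously, the sentence ``an implicit function theorem modulo this kernel should produce, for $t$ near $1$, a family of solutions'' is precisely the delicate point, and you do not say how the reduced finite-dimensional equation on $\ker(\Box_{SE} - (2n+2))$ is to be solved. The paper handles this by restricting $\iota := I_{\eta} - J_{\eta}$ to the $G$-orbit $O \subset \mathscr{E}$, proving $\iota$ is proper so a minimizer exists, computing ${\rm Hess}\,\iota$ at a critical point, and showing (Lemma \ref{Hessian2}) that the derivative of the Lyapunov--Schmidt reduced map equals $V({\rm Hess}\,\iota)$; since a minimizer only guarantees a positive semi-definite Hessian, one must additionally perturb the initial metric to $g^{\delta} = g_{\delta\varphi_{1}}$ (Remark \ref{gauge}) to make it positive definite before the implicit function theorem applies. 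Finally, transitivity is not obtained by ``rejoining $g_{1}$ to $g_{0}$ along the path'': one takes two orbits $O^{\prime}$, $O^{\prime\prime}$, arranges the initial metric so that distinguished points of both extend backwards past $t = 1$, and invokes uniqueness of solutions for $t < 1$ (Theorem \ref{sol}) to force the two backward families to coincide, whence $O^{\prime} = O^{\prime\prime}$. These steps are the core of the proof rather than routine transfers from the K\"ahler case, and they are absent from your plan.
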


This paper is organized as follows: 
In Section 2, we give a brief review of Sasakian geometry and transverse K${\rm \ddot a}$hler geometry. 
In Section 3, by showing a Myers' type theorem on complete Sasaki manifolds, we give a proof of Theorem \ref{Myers2}. 
Our poof is then based on a variational formula for a minimizing normal geodesic in the sense of sub-Riemannian geometry (see \cite{M} for example). 
Finally in Section 4, we shall show that an argument similar to Bando and Mabuchi \cite{BM} allows us to obtain a proof of Theorem B. 

The first-named author would like to express his gratitude to Professor T. Mabuchi for valuable comments. 
The second-named author would like to thank Professor R. Goto for helphul advice. 
\section{Brief review of Sasakian geometry}
\subsection{Sasaki manifolds}
We recall the basic theory of Sasaki manifolds. 
For the details, see \cite{BG} and \cite{FOW}. 
Throughout this paper, we assume that all manifolds are connected. 
Let $(S, g)$ be a Riemannian manifold and $(C(S), \bar{g}) = (S \times \mathbb{R}_{+}, dr^{2} + r^{2}g)$ be its cone manifold, where $\mathbb{R}_{+} = \{ x \in \mathbb{R}\ |\ x >0 \}$ and $r$ is the standard coordinate on $\mathbb{R}_{+}$. 
\begin{defn}
$(S, g)$ is called a Sasaki manifold if the cone manifold $(C(S), \bar{g})$ is a K${\rm \ddot a}$hler manifold. 
\end{defn}
A Sasaki manifold $S$ is often identified with the submanifold $\{ r=1 \} \subset (C(S), \bar{g})$ and hence the dimension of $S$ is odd. 
Let $\dim S = 2n + 1$. Then, of course, $\dim_{\mathbb{C}}C(S) = n + 1$. 
Let $J$ be the complex structure of the cone $(C(S), \bar{g})$ and define $\tilde{\xi} := J(r\frac{\partial}{\partial r})$. 
The restriction $\xi := \tilde{\xi}|_{\{ r = 1\}}$ of $\tilde{\xi}$ to the submanifold $\{ r = 1\}$ gives a vector field on $S$. 
The vector field $\xi$ is called the {\it Reeb vector field}. 
The $1$-dimensional foliation $\mathcal{F}_{\xi}$ generated by $\xi$ is called the {\it Reeb foliation}. 
Define a differential $1$-form $\eta$ on $S$ by $\eta := g(\xi, \cdot)$. 
Then, one can see that 
\begin{enumerate}
\item $\tilde{\xi}$ is a Killing vector field and satisfies $L_{\tilde{\xi}}J = 0$, 
\item $\nabla_{\xi}\xi = 0$, 
\item $\eta(\xi) = 1$, $\iota_{\xi}d\eta = 0$. 
\end{enumerate}
In particular $\xi$ is a Killing vector field on $S$. 
The $1$-form $\eta$ gives a $2n$-dimensional subbundle $D$ of the tangent bundle $TS$ by 
\begin{equation*}
D = \ker \eta. 
\end{equation*}
The subbundle $D$ is a contact structure of $S$ and there is an 
orthogonal decomposition 
\begin{equation*}
TS = D \oplus L_{\xi}, 
\end{equation*}
where $L_{\xi}$ is the $1$-dimensional trivial bundle generated by the Reeb vector field $\xi$. 

Next we define a section $\Phi$ of the endomorphism bundle ${\rm End}(TS)$ of the tangent bundle $TS$ by $\Phi = \nabla \xi$. 
Then it satisfies that 
\begin{equation*}
\Phi^{2} = -{\rm id} + \eta \otimes \xi
\end{equation*}
and $g(\Phi X, \Phi Y) = g(X, Y) - \eta(X)\eta(Y)$. 
Furthermore, $\Phi |_{D} = J |_{D}$ and $\Phi |_{L_{\xi}} = 0$, and this shows that $\Phi$ gives a complex structure of $D$. 
We call the quadruple $\mathcal{S} = (g, \xi, \eta, \Phi)$ a {\it Sasakian structure} of $S$. 
From these description, the restriction $g_{D} := g |_{D \times D}$ of the metric $g$ to $D$ is an Hermitian metric on $D$ and the associated $2$-form of the Hermitian metric is equal to $\frac{1}{2}d\eta |_{D \times D}$; 
\begin{equation*}
d\eta(X, Y) = 2g(\Phi X, Y)
\end{equation*}
for each $X, Y \in D$. 
Since $\eta$ is a contact form, $\frac{1}{n!}(\frac{1}{2}d\eta)^{n} \wedge \eta$ is a non-vanishing $(2n + 1)$-form and coincides with the Riemannian volume form $dV_{g}$. 
The covariant differentiation of $\Phi$ can be written as a language of the curvature; 
\begin{equation*}
(\nabla_{X}\Phi)(Y) = R(X, \xi)Y = g(\xi, Y)X - g(X, Y)\xi 
\end{equation*}
for any $X, Y \in TS$. 

\subsection{Transverse holomorphic structures and transverse K{\boldmath ${\rm \ddot a}$}hler structures}
As we saw in the last subsection, $\tilde{\xi} - \sqrt{-1}J\tilde{\xi}$ is a holomorphic vector field on $C(S)$. 
Hence there is a $\mathbb{C}^{*}$-action generated by $\tilde{\xi} - \sqrt{-1}J\tilde{\xi}$. 
The local orbits of this action defines a transverse holomorphic structure on the Reeb foliation $\mathcal{F}_{\xi}$ in the following sense; 
There is an open covering $\{ U_{\alpha} \}_{\alpha \in A}$ of $S$ and submersions $\pi_{\alpha} : U_{\alpha} \to V_{\alpha} \subset \mathbb{C}^{n}$ such that when $U_{\alpha} \cap U_{\beta} \neq \phi$ 
\begin{equation*}
\pi_{\alpha} \circ \pi_{\beta}^{-1} : \pi_{\beta}(U_{\alpha} \cap U_{\beta}) \to \pi_{\alpha}(U_{\alpha} \cap U_{\beta})
\end{equation*}
is biholomorphic. 
On each open set $V_{\alpha} \subset \mathbb{C}^{n}$ we can give a K${\rm \ddot a}$hler structure as follows. 
First note that there is a canonical isomorphism $(\pi_{*})_{p}|_{D}: D_{p} \to T_{\pi(p)}V_{\alpha}$ for any $p \in U_{\alpha}$. 
Since $\xi$ generates isometries of $(S, g)$, the restriction $g_{D}$ of the Sasaki metric $g$ to $D$ gives a well-defined Hermitian metric $g^{T}_{\alpha}$ on $V_{\alpha}$. 
This Hermitian structure is in fact K${\rm \ddot a}$hler. 
The fundamental 2-form $\omega^{T}_{\alpha}$ of $g^{T}_{\alpha}$ is the same as the restriction of $\frac{1}{2}d\eta$ to $U_{\alpha}$. 
Hence we see that $\pi_{\alpha} \circ \pi_{\beta}^{-1} : \pi_{\beta}(U_{\alpha} \cap U_{\beta}) \to \pi_{\alpha}(U_{\alpha} \cap U_{\beta})$ gives an isometry of K${\rm \ddot a}$hler manifolds. 
The collection of K${\rm \ddot a}$hler metrics $\{g^{T}_{\alpha}\}_{\alpha \in A}$ on $\{V_{\alpha}\}_{\alpha \in A}$ is called a {\it transverse K$\ddot a$hler metric}. 
Since they are isometric over the overlaps we simply denote by $g^{T}$. 
We also write $\nabla^{T}, R^{T}, Ric^{T}, s^{T}$ for its Levi-Civita connection, the curvature, the Ricci tensor and the scalar curvature. 
By identifying $D_{p}$ and $T_{\pi_{\alpha}(p)}V_{\alpha}$, we have the following formulas for curvature; 
\begin{align}\label{curvature}
R(X, Y, Z, W) &= R^{T}(X, Y, Z, W) + g(\Phi(X), Z)g(\Phi(Y), W) \\ \notag
&\quad - g(\Phi(X), W)g(\Phi(Y), Z) +2g(\Phi(X), Y)g(\nabla_{\xi}Z, W), \\ 
Ric^{T}(X, Y) &= Ric(X, Y) + 2g(X, Y)
\end{align}
for any local sections $X, Y, Z, W$ of $D$. 
For the detail, see \cite{BG}. 
\subsection{Basic forms}
In this section we assume that the Sasaki manifold $(S, g)$ is compact. 
\begin{defn}
A $k$-form $\alpha$ on $S$ is called {\it basic} if 
\begin{equation*}
\iota_{\xi}\alpha = L_{\xi}\alpha = 0. 
\end{equation*}
Let $\Lambda_{B}^{k}$ be the sheaf of germs of basic $k$-forms and $\Omega_{B}^{k}$ be the set of all basic $k$-forms. 
\end{defn}
Let $(x, z^{1}, \cdots, z^{n})$ be a foliation chart on $U_{\alpha}$. 
Consider a complex basic form $\alpha$ which can be written as
\begin{equation*}
\alpha = \alpha_{i_{1}, \cdots, i_{p}, j_{1}, \cdots, j_{q}}dz^{i_{1}} \wedge \cdots \wedge dz^{i_{p}} \wedge d \bar z^{j_{1}} \wedge \cdots \wedge d\bar z^{j_{q}}. 
\end{equation*}
We call such $\alpha$ a {\it basic $(p, q)$-form}. 
It is easy that the definition of basic $(p, q)$-forms is independent of choice of foliation chart. 
Let $\Lambda_{B}^{p, q}$ be the sheaf of germs of basic $(p, q)$-forms and $\Omega_{B}^{p, q}$ be the set of all basic $(p, q)$-forms. 
Then for each $k$, $\Lambda_{B}^{k} \otimes \mathbb{C}$ (resp. $\Omega_{B}^{k}\otimes \mathbb{C}$) can be decomposed as 
\begin{equation*}
\Lambda_{B}^{k} \otimes \mathbb{C} = \oplus_{p + q = k}\Lambda_{B}^{p, q},\ ({\rm resp.\ }\Omega_{B}^{k} \otimes \mathbb{C} = \oplus_{p + q = k}\Omega_{B}^{p, q}). 
\end{equation*}
Since the exterior derivative $d$ preserves the basic forms, its restriction $d_{B}$ to the space of basic forms can be decomposed into $d_{B} = \partial_{B} + \bar \partial_{B}$ by well-defined operators 
\begin{equation*}
\partial_{B} : \Lambda_{B}^{p, q} \to \Lambda_{B}^{p+1, q}\ \text{and}\ \bar{\partial}_{B} : \Lambda_{B}^{p, q} \to \Lambda_{B}^{p, q+1}. 
\end{equation*}
Let $d_{B}^{*}, \partial_{B}^{*}$ and $\bar{\partial}_{B}^{*}$ be the formal adjoint operators of $d_{B}, \partial_{B}$ and $\bar{\partial}_{B}$ and define 
\begin{equation*}
\Delta_{B} := d_{B}^{*}d_{B} + d_{B}d_{B}^{*},\ \Box_{B} := \partial_{B}^{*}\partial_{B} + \partial_{B}\partial_{B}^{*},\ \bar{\Box}_{B} := \bar{\partial}_{B}^{*}\bar{\partial}_{B} + \bar{\partial}_{B}\bar{\partial}_{B}^{*}. 
\end{equation*}
As in the cases of compact K${\rm \ddot a}$hler manifolds, both $\Box_{B}$ and $\bar \Box_{B}$ are real operators and satisfy $\Delta_{B} = \frac{1}{2}\Box_{B} = \frac{1}{2}\bar{\Box}_{B}$ (See \cite{El}). 
Moreover, as shown later $\Delta_{B}$ coincides with Riemannian Laplacian $\Delta$ on the space of basic functions. 
Now we can consider the basic de Rham complex $(\Omega_{B}^{*}, d_{B})$ and the basic Dolbeault complex $(\Omega^{p, *}, \bar{\partial}_{B})$. 
Their cohomology group is called the basic cohomology group. 
Similarly, we can consider the basic harmonic forms. 
El-Kacimi-Alaoui shows in \cite{El} that there is an isomorphism between basic cohomology groups and the space of basic harmonic forms. 

We denote by $C_{B}^{\infty}(S)$ the set of smooth all basic functions on $S$. 
For arbitrary basic function $\varphi \in C_{B}^{\infty}(S)$, define 
\begin{equation*}
\eta_{\varphi} := \eta + 2d_{B}^{c}\varphi, 
\end{equation*}
where $d_{B}^{c} = \frac{\sqrt{-1}}{2}(\bar{\partial}_{B} - \partial_{B})$. 
Then we have 
\begin{equation*}
\frac{1}{2}d\eta_{\varphi} = \frac{1}{2}d\eta + d_{B}d_{B}^{c}\varphi = \frac{1}{2}d\eta + \sqrt{-1}\partial_{B}\bar{\partial}_{B}\varphi. 
\end{equation*}
Thus, for small $\varphi$, $\eta_{\varphi} \wedge (\frac{1}{2}d\eta_{\varphi})^{n}$ is nowhere vanishing and the $1$-form $\eta_{\varphi}$ gives a new Sasakian structure $\mathcal{S}_{\varphi} = (g_{\varphi}, \xi, \eta_{\varphi}, \Phi)$. 
By construction, $\mathcal{S}_{\varphi}$ defines the same transverse holomorphic structure with that of $\mathcal{S}$ (see \cite{FOW} for the detail). 
Under such a deformation, the transverse K${\rm \ddot a}$hler form is deformed in the same basic $(1, 1)$ class $[\frac{1}{2}d\eta]_{B}$. 
We call this class the {\it basic K$\ddot a$hler class}. 
Note that the contact bundle $D$ may be changed under the deformation. 

As we saw in the last subsection, the transverse K${\rm \ddot a}$hler form $\{ \omega^{T}_{\alpha} \}_{\alpha \in A}$ of a Sasaki manifold $(S, g)$ satisfies 
\begin{equation*}
\pi_{\alpha}^{*}\omega^{T}_{\alpha} = \frac{1}{2}d\eta |_{U_{\alpha}}. 
\end{equation*}
Thus they are glued together and give a $d_{B}$-closed basic $(1, 1)$-form $d\eta$ on $S$. 
We also call $\omega^{T} = \frac{1}{2}d\eta$ the transverse K${\rm \ddot a}$hler form. 
Similarly we see that the Ricci forms of the transverse K${\rm \ddot a}$hler metric $\{ \rho^{T}_{\alpha}\}_{\alpha \in A}$, 
\begin{equation*}
\rho^{T}_{\alpha} = -\sqrt{-1}\partial \bar \partial \log \det(g^{T}_{\alpha}), 
\end{equation*}
are glued together and give a $d_{B}$-closed basic $(1, 1)$-form $\rho^{T}$ on $S$. 
$\rho^{T}$ is called the transverse Ricci form. 
Of course, the transverse Ricci form $\rho^{T}$ depends on Sasaki metrics $g$. 
Nevertheless its basic de Rham cohomology class is invariant under deformations of the Sasakian structure by basic functions. 
The basic de Rham cohomology class $[\rho^{T} / 2 \pi]_{B}$ is called the {\it basic first Chern class} and denoted by $c^{B}_{1}(S)$. 
\subsection{Basic first Chern class and Monge-Amp{\boldmath ${\rm \grave{e}}$}re equations}
Let $(S, g)$ be a $(2n + 1)$-dimensional compact Sasaki manifold. 
\begin{defn}
A Sasaki-Einstein manifold is a Sasaki manifold $(S, g)$ with $Ric = 2ng$. 
\end{defn}
The Einstein condition of a Sasaki manifold is translated into Einstein conditions of the Riemannian cone $(C(S), \bar g)$ or the transverse K${\rm \ddot a}$hler structure. 
In short, these conditions are equivalent; 
\begin{enumerate}
\item $g$ is a Sasaki-Einstein metric. 
\item The Riemannian cone $(C(S), \bar g)$ is a Ricci-flat K${\rm \ddot a}$hler manifold. 
\item The transverse K${\rm \ddot a}$hler metric $g^{T}$ satisfies $Ric^{T} = (2n + 2)g^{T}$. 
\end{enumerate}
We say that the basic first Chern class $c_{1}^{B}(S)$ of $S$ is positive if $c_{1}^{B}(S)$ is represented by a transverse K${\rm \ddot a}$hler form, and we express this condition by $c_{1}^{B}(S) > 0$. 
If there exists a Sasaki-Einstein metric, then there exists a transverse K${\rm \ddot a}$hler-Einstein metric $g^{T}$ with ${\rm Ric}^{T} = (2n + 2)g^{T}$ and in particular the basic first Chern class must be positive. 
We remark that there is a further necessary condition for the existence of positive or negative transverse K${\rm \ddot a}$hler-Einstein metric. 
\begin{prop}[Futaki-Ono-Wang, \cite{FOW}]
The basic first Chern class is represented by $\tau d\eta$ for some constant $\tau$ if and only if $c_{1}(D) = 0$. 
\end{prop}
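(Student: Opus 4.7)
The plan is to split the equivalence into its two implications. The forward direction should be almost immediate from functoriality, while the converse will require averaging an a priori non-basic primitive of $\rho^{T}$ over the closure of the Reeb flow.

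For the forward direction, I would use the natural map $\iota : H^{*}_{B}(S) \to H^{*}_{dR}(S;\mathbb{R})$ induced by the inclusion of basic forms into all differential forms. The form $\rho^{T}$ represents $c_{1}$ of the transverse tangent bundle, which is isomorphic to $(D,\Phi|_{D})$ as a complex vector bundle, so $\iota(c_{1}^{B}(S)) = c_{1}(D)$ in real cohomology. On the other hand, $\iota([d\eta]_{B}) = [d\eta] = 0$ because $d\eta$ is globally exact on $S$. Hence if $c_{1}^{B}(S) = \tau[d\eta]_{B}$ in basic cohomology, then $c_{1}(D) = \tau \cdot 0 = 0$.

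For the converse, assume $c_{1}(D) = 0$ in $H^{2}(S;\mathbb{R})$, so that $\rho^{T} = d\alpha$ for some ordinary 1-form $\alpha$ on $S$. Since $\xi$ is Killing on the compact manifold $S$, the closure $T$ of the one-parameter subgroup it generates in the isometry group is a compact torus acting isometrically, and basic forms are precisely the $T$-invariant forms that are annihilated by $\iota_{\xi}$. I would average $\alpha$ over $T$ to obtain a $T$-invariant 1-form $\bar\alpha$ with $d\bar\alpha = \rho^{T}$, using that $\rho^{T}$ is already $T$-invariant. Then I would decompose $\bar\alpha = \tilde\alpha + f\eta$ with $f := \iota_{\xi}\bar\alpha$ and $\tilde\alpha := \bar\alpha - f\eta$, so that $\iota_{\xi}\tilde\alpha = 0$. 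A direct calculation with Cartan's formula, using $L_{\xi}\bar\alpha = 0$ and $L_{\xi}\eta = 0$, shows $\tilde\alpha$ and $f$ are both basic. Expanding
\begin{equation*}
\rho^{T} = d\bar\alpha = d\tilde\alpha + df \wedge \eta + f\,d\eta,
\end{equation*}
and applying $\iota_{\xi}$ together with $\iota_{\xi}\rho^{T} = 0$ forces $df = 0$, so $f$ is a constant. Therefore $\rho^{T} - f\,d\eta = d\tilde\alpha$ is basic exact, and $c_{1}^{B}(S) = (f/2\pi)[d\eta]_{B}$, proving the claim with $\tau = f/2\pi$.

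The main obstacle I anticipate is that in the irregular case the orbits of $\xi$ are not closed, so one cannot simply average over a circle. Passing to the torus $T$ given by the closure of the Reeb flow in the isometry group (which is compact since $S$ is compact and $\xi$ is Killing) circumvents this, because $T$-invariance implies $L_{\xi}$-invariance, and this is exactly the input the subsequent decomposition argument needs. Once this averaging is in place, the rest is a routine Cartan-calculus computation that works uniformly in the regular, quasi-regular, and irregular cases.
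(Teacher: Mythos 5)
Your argument is correct, but note that the paper itself offers no proof of this proposition: it is quoted from Futaki--Ono--Wang \cite{FOW} as a known result, so there is no in-paper argument to compare against. Your two implications are both sound. The forward direction correctly uses that the natural map $H^{2}_{B}(S) \to H^{2}_{dR}(S;\mathbb{R})$ sends $c_{1}^{B}(S)$ to the real image of $c_{1}(D)$ (via the identification of the transverse holomorphic tangent bundle with $(D, \Phi|_{D})$) and kills $[d\eta]_{B}$ because $d\eta$ is globally exact. The converse is where the real content lies, and your averaging-plus-Cartan-calculus computation is complete: $T$-invariance of $\bar\alpha$ gives $L_{\xi}\bar\alpha = 0$, the decomposition $\bar\alpha = \tilde\alpha + f\eta$ produces basic pieces (using $L_{\xi}\eta = 0$ and $[L_{\xi}, \iota_{\xi}] = 0$), and contracting $\rho^{T} = d\tilde\alpha + df\wedge\eta + f\,d\eta$ with $\xi$ forces $df = 0$. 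You correctly identify the irregular case as the only hazard and correctly dispose of it by passing to the torus closure. For comparison, the route in \cite{FOW} (and in the standard foliation literature) is to invoke the long exact sequence
\begin{equation*}
\cdots \to H^{0}_{B}(\mathcal{F}_{\xi}) \xrightarrow{\ \cup\, [d\eta]_{B}\ } H^{2}_{B}(\mathcal{F}_{\xi}) \to H^{2}(S;\mathbb{R}) \to \cdots,
\end{equation*}
from which the kernel of $H^{2}_{B} \to H^{2}$ is exactly $\mathbb{R}\,[d\eta]_{B}$, giving both directions at once. Your averaging argument is in effect a self-contained, elementary proof of exactness at that spot, which is a reasonable trade: it avoids citing the Gysin-type sequence at the cost of a page of Cartan calculus. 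One small point of hygiene: you should say explicitly that $\rho^{T}$, being basic, satisfies $L_{\xi}\rho^{T} = 0$ and is therefore invariant under the full closure $T$ of the Reeb flow (by continuity and density), which is what justifies $d\bar\alpha = \rho^{T}$ after averaging; you use this fact but state it only as ``$\rho^{T}$ is already $T$-invariant'' without proof.
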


Now we consider a condition for existence of Sasaki-Einstein metric and set up the Monge-Amp${\rm \grave e}$re equation. 
Let $(S, g)$ be a compact Sasaki manifold with Sasakian structure $\mathcal{S} = (g, \xi, \eta, \Phi)$. 
Suppose that $c_{1}^{B}(S) > 0$ and $c_{1}^{B}(S) = (2n + 2)[\frac{1}{2}d\eta]$ (in particular $c_{1}(D) = 0$). 
Then by a result of El Kacimi-Alaoui \cite{El}, there is a unique basic function $h \in C_{B}^{\infty}(S)$ such that 
\begin{equation*}
\rho^{T} - (2n + 2)\frac{1}{2}d\eta = \sqrt{-1}\partial_{B} \bar{\partial}_{B}h,\quad \int_{S}(e^{h} - 1) (\frac{1}{2}d\eta)^{n} \wedge \eta = 0. 
\end{equation*}
Suppose that we can get a Sasaki-Einstein metric by a form $g_{\varphi}$ for some basic function $\varphi$. 
Then associated transverse K${\rm \ddot a}$hler form $\omega^{T}_{\varphi} = \frac{1}{2}d\eta + \sqrt{-1}\partial_{B}\bar{\partial}_{B}\varphi$ satisfies 
\begin{equation*}
\rho^{T}_{\varphi} = (2n + 2)\omega^{T}_{\varphi}. 
\end{equation*}
This leads the transverse K${\rm \ddot a}$hler-Einstein (or equivalently Sasaki-Einstein) equation 
\begin{equation*}
\frac{\det(g^{T}_{i \bar{j}} + \frac{\partial^{2}\varphi}{\partial z^{i}\partial \bar z^{j}})}{\det(g^{T}_{i \bar{j}})} = \exp(-(2n + 2)\varphi + h) 
\end{equation*}
with $(g^{T}_{i \bar{j}} + \frac{\partial^{2}\varphi}{\partial z^{i}\partial \bar z^{j}})$ positive definite. 

In \cite{CFO} and \cite{FOW}, the existence and uniqueness of Sasaki-Einstein metrics on compact toric Sasaki manifold is studied. 
In \cite{FOW}, the authors proved that for any compact toric Sasaki manifold $(S, g)$ with $c_{1}^{B}(S) > 0$ and $c_{1}(D) = 0$, we can get a Sasaki-Einstein metric by deforming the Sasaki structure varying the Reeb vector field (cf. Theorem 1.2. in \cite{FOW}). 
Uniqueness of such Einstein metrics up to a connected group action is proved in \cite{CFO}. 
Given a Sasaki manifold $(S, g)$, we say that another Sasaki metric $g^{\prime}$ on $S$ is {\it compatible} with $g$ if $g$ and $g^{\prime}$ have the same Reeb vector field and the transverse holomorphic structure. 
Note that $g$ and $g^{\prime}$ has the same basic K${\rm \ddot a}$hler class. 
Indeed, for corresponding Sasakian structure $\mathcal{S}^{\prime} = \{ g^{\prime}, \xi^{\prime}, \eta^{\prime}, \Phi^{\prime} \}$, it satisfies that $\zeta := \eta - \eta^{\prime}$ is basic because $\xi = \xi^{\prime}$. 
This shows that $d\eta - d\eta^{\prime} = d\zeta$ and in particular $[\frac{1}{2}d\eta]_{B} = [\frac{1}{2}d\eta^{\prime}]_{B}$. 
Hence by transverse $\partial \bar \partial$-Lemma (see \cite{El}), there exists a basic function $\varphi \in C_{B}^{\infty}(S)$ such that $\frac{1}{2}d\eta^{\prime} = \frac{1}{2}d\eta + \sqrt{-1}\partial_{B} \bar \partial_{B} \varphi$. 
\begin{defn}
The automorphism group of the transverse holomorphic structure of $(S, g)$ is the biholomorphic automorphisms of $C(S)$ which commute with the holomorphic flow generated by $\tilde{\xi} - \sqrt{-1}J\tilde{\xi}$. 
\end{defn}
We denote by ${\rm Aut}(C(S), \tilde{\xi})$ the group of the automorphisms of transverse holomorphic structure and by $G := {\rm Aut}(C(S), \tilde{\xi})_{0}$ its identity component. 
It is known that the action of ${\rm Aut}(C(S), \tilde{\xi})$ on $C(S)$ descends to an action on $S$ preserving the Reeb vector field and the transverse holomorphic structure of the Reeb foliation. 
In particular, $G$ acts on the space of all Sasaki metrics on $S$ which is compatible with $g$. 
The Lie algebra of ${\rm Aut}(C(S), \tilde{\xi})$ is explained as follows. 
\begin{defn}[Futaki-Ono-Wang, \cite{FOW}]
A complex vector field $X$ on $S$ is called a Hamiltonian holomorphic vector field if 
\begin{enumerate}
\item $(\pi_{\alpha})_{*} X$ is a holomorphic vector field on $V_{\alpha}$ for each $\alpha \in A$, 
\item the complex valued function $u_{X} := \sqrt{-1}\eta(X)$ satisfies 
\begin{equation*}
\bar \partial_{B}u_{X} = -\frac{\sqrt{-1}}{2}\iota_{X}d\eta. 
\end{equation*}
\end{enumerate}
\end{defn}
By definition, every Hamiltonian holomorphic vector field is supposed to commute with $\xi$. 
We denote by $\mathfrak{h}$ the set of all Hamiltonian holomorphic vector fields. 
One can check easily that $\mathfrak{h}$ is in fact a Lie algebra. 
Then it is proved in \cite{CFO} that the Lie algebra of ${\rm Aut}(C(S), \tilde{\xi})$ is isomorphic to $\mathfrak{h}$. (For detailed descriptions, see also \cite{FOW}). 
Under the notations and conventions, they proved that, for toric cases, $G$ acts transitively on the space of all Sasaki-Einstein metrics compatible with $g$. 

\subsection{Basic Laplacians for Sasaki manifolds}
In the previous subsection, we introduced the notion of basic Laplacian, which is defined on the space of basic forms. 
Here we shall show that the basic Laplacian $\Delta_{B}$ coincides with the restriction $\Delta|_{C_{B}^{\infty}(S)}$ of the Riemannian Laplacian $\Delta$ to $C_{B}^{\infty}(S)$. 
Let $T := T^{\xi} \subset {\rm Isom}(S, g)$ be the compact subgroup of ${\rm Isom}(S, g)$ generated by the Reeb vector field $\xi$ and $dt$ be the normalized Haar measure on $T$. 
For any smooth function $\varphi \in C^{\infty}(S)$ define 
\begin{equation*}
B(\varphi) := \int_{T}t^{*}\varphi dt. 
\end{equation*}
Then $B$ defines a linear operator on $C^{\infty}(S)$. 
It is clear that $B(\varphi) \in C_{B}^{\infty}(S)$ for any $\varphi \in C^{\infty}(S)$ and $B(\varphi) = \varphi$ if and only if $\varphi \in C_{B}^{\infty}(S)$. 
Furthermore one can show that $B$ is symmetric with respect to the $L^{2}$-inner product on $C^{\infty}(S)$ by Fubini theorem and the symmetry of $T$. 
Hence we obtain a orthogonal decomposition 
\begin{equation*}
C^{\infty}(S) = C_{B}^{\infty}(S) \oplus C_{B}^{\infty}(S)^{\perp}, 
\end{equation*}
where $C_{B}^{\infty}(S)^{\perp}$ is the orthogonal complement of $C_{B}^{\infty}(S)$ with respect to $L^{2}$-inner product and $B$ is the orthogonal projection from $C^{\infty}(S)$ onto $C_{B}^{\infty}(S)$. 

We denote $d^{*}$ the formal adjoint operator of $d$. 
For each $\varphi \in C_{B}^{\infty}(S)$ and $\alpha \in \Omega_{B}^{1}(S)$, we have 
\begin{eqnarray*}
( d_{B}\varphi, \alpha ) 
&=& ( d\varphi, \alpha ) 
= ( \varphi, d^{*}\alpha ) \\
&=& ( B(\varphi), d^{*}\alpha ) 
= ( \varphi, Bd^{*}\alpha ), 
\end{eqnarray*}
where $( \cdot, \cdot )$ is the $L^{2}$-inner product on the space of smooth differential forms. 
This shows that $d_{B}^{*} = B \circ d^{*}$ and hence we obtain 
\begin{equation}\label{L1}
\Delta_{B}\varphi 
= d_{B}^{*}d_{B}\varphi 
= Bd^{*}d\varphi 
= B\Delta \varphi. 
\end{equation}
Furthermore, for each $\varphi \in C_{B}^{\infty}(S)$ and $t \in T$, $t^{*}\Delta \varphi = \Delta t^{*}\varphi = \Delta \varphi$ since $t$ acts on $(S, g)$ as an isometry. 
Therefore we obtain 
\begin{eqnarray}\label{L2}
B\Delta \varphi
= \int_{T}t^{*}\Delta \varphi dt 
= \int_{T}\Delta \varphi dt 
= \Delta \varphi. 
\end{eqnarray}
By combining the equalities (\ref{L1}) and (\ref{L2}), we have the following 
\begin{prop}\label{Laplacian}
For each $\varphi \in C_{B}^{\infty}(S)$ we have 
$\Delta_{B}\varphi = \Delta \varphi. $
\end{prop}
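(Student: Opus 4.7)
The plan is to exploit the averaging operator $B$ over the Reeb torus $T$ to compare the two Laplacians, noting that both the argument structure and the key formulas have been set up in the preceding discussion. The strategy is to show separately that (i) $\Delta_B\varphi = B\Delta\varphi$ for any basic $\varphi$, and (ii) $B\Delta\varphi = \Delta\varphi$ whenever $\varphi$ is basic; chaining these two identities yields the proposition immediately.

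For step (i), I would first observe that $d$ and $d_B$ agree on $C_B^\infty(S)$, since a basic function is in particular a smooth function and $d\varphi$ is automatically basic when $\varphi$ is (this uses $L_\xi\varphi = 0$ and that $d$ commutes with $L_\xi$, together with $\iota_\xi d\varphi = \xi\varphi = 0$). Next I would identify the formal adjoint $d_B^*$ acting on basic $1$-forms with $B\circ d^*$. This is the duality computation sketched in the text: for $\varphi\in C_B^\infty(S)$ and $\alpha\in\Omega_B^1(S)$, one has $(d_B\varphi,\alpha) = (d\varphi,\alpha) = (\varphi,d^*\alpha)$, and since $\varphi = B(\varphi)$ and $B$ is $L^2$-symmetric (Fubini plus the fact that $T$ is a compact abelian group acting by isometries), this rearranges to $(\varphi,Bd^*\alpha)$. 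Combining these gives $\Delta_B\varphi = d_B^*d_B\varphi = Bd^*d\varphi = B\Delta\varphi$.

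For step (ii), I would use the fact that every $t\in T$ acts as an isometry of $(S,g)$, so $t^*$ commutes with the Riemannian Laplacian $\Delta$. Then for basic $\varphi$, the integrand $t^*\Delta\varphi = \Delta(t^*\varphi) = \Delta\varphi$ is $t$-independent, and integration over $T$ (with normalized Haar measure) yields $B\Delta\varphi = \Delta\varphi$.

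There is no real obstacle in this argument; the only subtlety is verifying that $B$ really is the $L^2$-orthogonal projection onto $C_B^\infty(S)$, which requires the symmetry of $B$ under the $L^2$ pairing and the identification $B(\varphi)=\varphi\iff\varphi\in C_B^\infty(S)$. Both points have been established in the discussion preceding the proposition, so the proof reduces to concatenating equations \eqref{L1} and \eqref{L2} and recording the conclusion.
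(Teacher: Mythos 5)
Your argument is correct and is exactly the paper's proof: step (i) is the duality computation giving $\Delta_B\varphi = B\Delta\varphi$ (equation (\ref{L1})) and step (ii) is the isometry argument giving $B\Delta\varphi = \Delta\varphi$ (equation (\ref{L2})), which are then chained together. No differences of substance.
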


Using the foliation chart, we can get an explicit formula for the basic complex Laplacian $\Box_{B} = \frac{1}{2}\Delta_{B}$ by a similar calculation in K${\rm \ddot a}$hler geometry. 
\begin{prop}
For a foliation chart $(x, z^{1}, \cdots, z^{n})$, we have 
\begin{equation*}
\Box_{B}\varphi = -(g^{T})^{i \bar j}\frac{\partial^{2}\varphi}{\partial z^{i}\partial \bar z^{j}}
\end{equation*}
for each $\varphi \in C_{B}^{\infty}(S)$. 
\end{prop}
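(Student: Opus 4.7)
The plan is to reduce the identity to the standard Kähler formula applied on each transverse quotient $V_\alpha$, using Proposition~\ref{Laplacian} to bridge from the basic Laplacian to the Riemannian one. In a foliation chart $(x, z^1, \ldots, z^n)$, a basic function $\varphi$ satisfies $\xi(\varphi) = 0$ and hence depends only on $(z, \bar z)$. Combining Proposition~\ref{Laplacian} with the relation $\Box_B = \tfrac{1}{2}\Delta_B$ noted just before the proposition, it suffices to prove that the Riemannian Laplacian obeys $\Delta\varphi = -2(g^T)^{i\bar j}\tfrac{\partial^2\varphi}{\partial z^i\partial\bar z^j}$ on basic functions; equivalently, that $\Delta$ agrees on $C_B^\infty(S)$ with the Laplacian of the transverse Kähler metric $g^T$ viewed on the local quotient.

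To carry this out I would choose at each point a local orthonormal frame $\{e_1, \ldots, e_{2n}, \xi\}$ adapted to the orthogonal splitting $TS = D \oplus L_\xi$ and expand $\Delta\varphi = -\sum_{i=1}^{2n}\bigl(e_i e_i \varphi - (\nabla_{e_i}e_i)\varphi\bigr) - \bigl(\xi\xi\varphi - (\nabla_\xi\xi)\varphi\bigr)$. The Reeb contribution vanishes because $\xi(\varphi) = 0$ and $\nabla_\xi\xi = 0$. For the horizontal part, the relation $\Phi X = \nabla_X \xi$ together with the skew-symmetry of $\Phi|_D$ with respect to $g$ yields $g(\nabla_{e_i} e_i, \xi) = -g(e_i, \Phi e_i) = 0$, so $\nabla_{e_i}e_i$ lies in $D$ and coincides with the transverse Levi-Civita derivative $\nabla^T_{e_i} e_i$ under the canonical isomorphism $D_p \cong T_{\pi_\alpha(p)}V_\alpha$. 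Consequently $\Delta\varphi$ equals the Laplacian of $(V_\alpha, g^T)$ applied to $\varphi$, viewed as a function on $V_\alpha$ via $\pi_\alpha$.

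Since $(V_\alpha, g^T)$ is Kähler, the standard local formula $\Delta^T\varphi = -2(g^T)^{i\bar j}\tfrac{\partial^2\varphi}{\partial z^i\partial\bar z^j}$ then gives the claimed expression. The main non-routine point is the vanishing of the $\xi$-component of $\nabla_{e_i}e_i$ for $e_i \in D$, which is an elementary consequence of $\Phi = \nabla\xi$ acting as a complex structure on $D$ together with the compatibility $g(\Phi X, Y) + g(X, \Phi Y) = 0$; once this is in hand, everything else reduces to the textbook Kähler computation on the local transverse quotient.
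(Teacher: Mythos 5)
Your argument is correct and follows exactly the route the paper indicates (it omits the proof, saying only that the formula follows from Proposition~\ref{Laplacian}, the relation $\Box_{B} = \frac{1}{2}\Delta_{B}$, and ``a similar calculation in K\"ahler geometry'' in the foliation chart): reduce to showing $\Delta\varphi$ equals the Laplacian of the transverse K\"ahler metric on the local quotient $V_{\alpha}$ and invoke the standard K\"ahler identity. The one point worth noting is that the vanishing of the $\xi$-component of $\nabla_{e_{i}}e_{i}$, while true, is not strictly needed, since $\xi\varphi = 0$ already kills that component's contribution to the Hessian trace.
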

\section{A diameter bound for complete Sasaki manifolds}
In this section, we assume that the Sasaki manifold $(S, g)$ is complete. 
A piecewise smooth curve $\gamma: [0, l] \to S$ is called {\it horizontal} if the differential $\dot \gamma(t)$ tangents to $D_{\gamma (t)}$ for all $t \in [0, l]$. 
For each $p,q \in S$, put 
\begin{equation*}
L_{D}(\gamma) := \int_{0}^{l}| \dot \gamma (t) | dt,\quad 
\end{equation*}
and define 
\begin{equation*}
d_{D}(p, q) := \inf\{ L(\gamma)\ |\ \gamma \in \Omega(p, q, D) \}, 
\end{equation*}
where $\Omega(p, q, D)$ is the set of all piecewise smooth horizontal curves joining $p$ to $q$. The function $L_{D}: \Omega(p, q, D) \to \mathbb{R}$ is called the {\it length} of horizontal curves and the function $d_{D}$ on $S \times S$ is called the {\it Carnot-Carath$\acute{e}$odory metric} of $S$. 
For the Riemannian distance function $d$ of $(S, g)$, it is clear that $d \leq d_{D}$. 
Since the contact distribution $D$ is bracket generating (i.e., brackets of local sections of $D$ generates all local sections of $TS$), the classical theorem of Chow tells us that the function $d_{D}$ gives a distance of $S$ and the topology induced by the distance coincides with the original topology of $S$ (For the proof, see \cite{M} for example). 
The main result of this section is stated as follows. 
We say that the transverse Ricci curvature is bounded from below if there exist a constant $\tau \in \mathbb{R}$ such that ${\rm Ric}^{T}(X, X) \geq \tau g(X, X)$ for each $X \in D$. 
We express the condition by $Ric^{T} \geq \tau g^{T}$. 
Hasegawa and Seino shows in \cite{HS} that a complete Sasaki manifold with $Ric^{T} \geq \tau g^{T}$ for a positive constant $\tau > 0$ is compact with finite fundamental group. 
Then we shall show the following stronger result. 
\begin{thm}\label{Myers}
Let $(S, g)$ be a $(2n + 1)$ dimensional complete Sasaki manifold with Sasakian structure $\mathcal{S} = \{g, \xi, \eta, \Phi\}$. 
Suppose that $Ric^{T} \geq \tau g^{T}$ for some constant $\tau > 0$. 
Then 
\begin{equation*}
{\rm diam}(S, d_{D}) \leq 2\pi\sqrt{\frac{2n - 1}{\tau}}. 
\end{equation*}
\end{thm}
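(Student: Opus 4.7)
The plan is to run a sub-Riemannian analog of Myers' classical second-variation argument, working with a minimizing horizontal curve and admissible horizontal variations so that the second variation of length naturally involves the transverse Ricci $Ric^{T}$ rather than the full Riemannian Ricci. This detour through sub-Riemannian geometry is essential: since $Ric = Ric^{T} - 2g$ on $D$, the hypothesis only yields $Ric \ge (\tau - 2)g$ on horizontal directions, which may be non-positive, so classical Myers on $(S,g)$ is powerless.

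First I would fix $p, q \in S$ with $l := d_{D}(p,q)$ and, by completeness together with the Chow--Rashevsky theorem and the non-existence of nontrivial abnormal minimizers for contact sub-Riemannian structures, produce a smooth unit-speed minimizing horizontal curve $\gamma : [0, l] \to S$ joining them; such a $\gamma$ is automatically a \emph{normal} geodesic satisfying the Hamiltonian equations of the sub-Riemannian structure (see \cite{M}). Next I would characterize variation fields $V$ coming from families of horizontal curves: writing $V = V^{H} + \phi\, \xi$ and using the Sasakian identity $(\nabla_{X}\eta)(Y) = g(\Phi X, Y)$ (which follows from $\Phi = \nabla\xi$), the horizontality constraint $\eta(\partial_{t}\Gamma) \equiv 0$ reduces at first order to the ODE
\[
\dot\phi(t) \;=\; -2\, g(\Phi V^{H}(t), \dot\gamma(t)),
\]
so the vertical correction $\phi$ is determined by the horizontal profile $V^{H}$ up to a constant, with the endpoint conditions $V(0) = V(l) = 0$ forcing an integral constraint on $V^{H}$.

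For the test variations I would pick an orthonormal frame $\{\dot\gamma, E_{1} = \Phi\dot\gamma, E_{2}, \dots, E_{2n-1}\}$ of $D$ along $\gamma$, with the $E_{i}$ ($i \ge 2$) transported parallel in an appropriate sense, and take $V_{i}(t) = f_{i}(t) E_{i}(t) + \phi_{i}(t)\xi$ with $f_{i}(t) = \sin(\pi t/l)$ for $i \ge 2$ (where $g(\Phi E_{i}, \dot\gamma) = 0$ forces $\phi_{i} \equiv 0$, by antisymmetry of $\Phi|_{D}$ with respect to $g$), and a profile adapted to the integral constraint when $i = 1$, since $g(\Phi E_{1}, \dot\gamma) = -1$. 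Applying the second variation formula for length at the normal geodesic $\gamma$ and rewriting $\sum_{i} R(E_{i}, \dot\gamma, \dot\gamma, E_{i})$ via the curvature identity (\ref{curvature}), the correction terms involving $g(\Phi E_{i}, \dot\gamma)$ collapse thanks to the frame choice, and the remaining curvature sum reduces to $\sum_{i=1}^{2n-1} R^{T}(E_{i}, \dot\gamma, \dot\gamma, E_{i}) = Ric^{T}(\dot\gamma, \dot\gamma) \ge \tau$. Assembled, the total second variation $\sum_{i=1}^{2n-1} L_{i}''(0)$ becomes strictly negative once $l > 2\pi \sqrt{(2n-1)/\tau}$, contradicting the minimality of $\gamma$.

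The main obstacle is the precise second variation formula for a sub-Riemannian normal geodesic in the Sasakian setting together with the bookkeeping of the vertical correction $\phi_{1}$ along the $\Phi\dot\gamma$-direction: this correction is precisely what inflates the classical Myers constant $\pi$ to the sub-Riemannian value $2\pi$, so getting the factor of $2$ right hinges on treating $E_{1}$ carefully rather than dismissing it, as the identity $g(\Phi E_{i}, \dot\gamma) = 0$ does for $i \ge 2$. A secondary technical point is verifying the integral constraint on the variation fields and ensuring $\phi_{1}(0) = \phi_{1}(l) = 0$ by an appropriate choice of $f_{1}$, which is where the otherwise-natural profile $\sin(\pi t/l)$ must be modified.
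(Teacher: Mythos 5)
Your overall strategy is the paper's: minimize in the Carnot--Carath\'eodory metric, observe that a minimizer is a normal geodesic, compute the second variation over \emph{admissible} (horizontality-preserving) variations, and trace the resulting curvature terms into $Ric^{T}$ via the identity (\ref{curvature}). Your description of the admissibility constraint $\dot\phi = -2g(\Phi V^{H},\dot\gamma)$ agrees with (\ref{tangent}), and your observation that $\phi_i\equiv 0$ for directions orthogonal to $\Phi\dot\gamma$ while the $\Phi\dot\gamma$-direction needs a genuine vertical correction is exactly right. But there is a concrete gap in the final step. You take $f_i(t)=\sin(\pi t/l)$ for $i\geq 2$ and a \emph{different} (necessarily mean-zero) profile for $i=1$, and then sum the $2n-1$ second-variation inequalities claiming the curvature terms assemble into $Ric^{T}(\dot\gamma,\dot\gamma)$. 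They do not: each inequality carries the weight $f_i^{2}(t)$ under the integral, and with unequal weights the sum is
\begin{equation*}
\int_{0}^{l}\Bigl\{ \sin^{2}(\tfrac{\pi t}{l})\sum_{i\geq 2}R^{T}(E_i,\dot\gamma,\dot\gamma,E_i) + f_1^{2}(t)\,R^{T}(\Phi\dot\gamma,\dot\gamma,\dot\gamma,\Phi\dot\gamma)\Bigr\}dt \leq \cdots,
\end{equation*}
which cannot be bounded below using only $Ric^{T}\geq\tau g^{T}$, since the hypothesis controls the trace but not the individual term $R^{T}(\Phi\dot\gamma,\dot\gamma,\dot\gamma,\Phi\dot\gamma)$, and the difference of the two weights has no sign. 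The paper's resolution is to use the \emph{same} profile $h(t)=\sin(2\pi t/l)$ in every one of the $2n-1$ directions: this profile satisfies $\int_{0}^{l}h\,dt=0$, so in the $\Phi\dot\gamma$-direction the vertical correction $k(t)=\frac{l}{\pi}(1-\cos(\frac{2\pi t}{l}))$ solves $k'=2h$ with $k(0)=k(l)=0$, and the common weight $\sin^{2}(2\pi t/l)$ then lets the sum trace to $Ric^{T}(\dot\gamma,\dot\gamma)\geq\tau$. This uniform frequency doubling is exactly where the constant $2\pi\sqrt{(2n-1)/\tau}$ (rather than $\pi\sqrt{(2n-1)/\tau}$) comes from; your mixed-profile version produces neither that constant nor a valid inequality.

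A secondary point you should not gloss over: in the $\Phi\dot\gamma$-direction the second variation is not simply $\int (h'^{2}-h^{2}K)$. The vertical component $k\xi$, the term $\nabla_{\dot\gamma}\Phi(\dot\gamma)=-\xi+2\alpha_{0}\dot\gamma$ (here $\alpha_{0}$ is the vertical covector component of the normal geodesic, entering through $\nabla_{\dot\gamma}\dot\gamma=-2\alpha_{0}\Phi\dot\gamma$), the curvature $R(\xi,\dot\gamma)\dot\gamma=\xi$, and the $-3h^{2}$ correction from (\ref{curvature}) applied to the $\Phi$-invariant plane all contribute, and their mutual cancellation is the content of the computation leading to (\ref{conclusionDD}). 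Asserting that these terms ``collapse thanks to the frame choice'' is not sufficient; the cancellation has to be exhibited, and it is what makes the $\Phi\dot\gamma$-direction contribute an inequality of the same clean form as the other $2(n-1)$ directions.
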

Then we can obtain Theorem A immediately because $d \leq d_{D}$. 
Our proof of Theorem \ref{Myers} is based on a variational formula of the energy of normal geodesics on the space of horizontal curves. 

\subsection{Normal geodesics}
A notion of {\it normal geodesics} is defined in sub-Riemannian geometry as the projection on $S$ of solutions of the ``Hamiltonian equation", which is defined below. 

A {\it sub-Riemannian manifold} is a triple $(S, E, g_{E})$ of a smooth manifold $S$, a subbundle $E$ of the tangent bundle $TS$ and a metric $g_{E}$ on $E$. 
For a Sasaki manifold $(S, g)$, the pair of the contact structure $D \subset TS$ and the restriction $g_{D}$ of the Sasaki metric $g$ to $D$ defines a sub-Riemannian structure of $S$, that is, $(S, D, g_{D})$ is a sub-Riemannian manifold. 
Hence we can apply the notions of sub-Riemannian geometry to Sasakian geometry. 
The detailed description can be seen in \cite{M} and \cite{RS1} for example. 
Let $T^{*}S$ be the cotangent bundle of $S$ and $H_{D}: T^{*}S \to \mathbb{R}$ the function on $T^{*}S$ defined by 
\begin{equation*}
H_{D}(p, \alpha) 
:= \frac{1}{2}(g_{D})^{-1}(\alpha |_{D}, \alpha |_{D})
= \frac{1}{2}g^{-1}(\alpha, \alpha ) - \frac{1}{2}\alpha(\xi)^{2} 
\end{equation*}
for each $(p, \alpha) \in T^{*}S$. 
We call the function $H_{D}$ the {\it Hamiltonian function}. 
For any foliation chart $(x_{0}, \cdots, x_{2n})$ with $\frac{\partial}{\partial x_{0}} = \xi$ and the canonical coordinates $(x_{0}, \cdots, x_{2n}$, $\alpha_{0}, \cdots, \alpha_{2n})$ on $T^{*}S$, consider the following ordinary differential equation; 
\begin{equation}\label{Hamiltonian}
\begin{cases}
\; \dot x_{i} = \frac{\partial H_{D}}{\partial \alpha_{i}}, \\
\; \dot \alpha_{i} = -\frac{\partial H_{D}}{\partial x_{i}}. 
\end{cases}  
\end{equation}
We call it the {\it Hamiltonian equation}. 

\begin{defn}
A smooth curve $\gamma : [0, l] \to S$ is called a normal geodesic if there exists a cotangent lift $\Gamma(t) = (\gamma(t), \alpha(t)) : [0, l] \to T^{*}S$ which satisfies the Hamiltonian equation (\ref{Hamiltonian}). 
\end{defn}

By existence and uniqueness of solutions of ordinary differential equations, the Hamiltonian equation (\ref{Hamiltonian}) has unique solution determined by initial value $\Gamma(0) = (p, \alpha) \in T_{p}^{*}S$. 
For a normal geodesic $\gamma(t)$ with the cotangent lift $\Gamma(t) = (\gamma(t), \alpha(t))$, the Hamiltonian equation can be rewritten as 
\begin{equation}\label{Hamiltonian2}
\begin{cases}
\; \dot \gamma(t) = g^{-1}(\alpha) - \alpha(\xi)\xi, \\
\; \frac{d \alpha_{i}}{dt} = -\frac{1}{2}\frac{\partial g^{kj}}{\partial x_{i}}\alpha_{k}\alpha_{j}, 
\end{cases}
\end{equation}
where $g_{kj} := g(\frac{\partial}{\partial x_{k}}, \frac{\partial}{\partial x_{j}})$ is the component of the Sasaki metric $g$ with respect to the local coordinate $(x_{0}, \cdots, x_{2n})$ and $(g^{kj})$ is the inverse matrix of $(g_{kj})$. 
This shows that a normal geodesic is always horizontal. 
Furthermore, the equation (\ref{Hamiltonian2}) implies 
\begin{equation}\label{Hamiltonian3}
\nabla_{\dot \gamma(t)}\dot \gamma(t) = -2\alpha_{0}\Phi(\dot \gamma(t)), 
\end{equation}
where $\alpha_{0} = \alpha(\xi)$ is constant by (\ref{Hamiltonian2}) and by that $\xi$ is a Killing vector field. 
In particular we see that $\gamma(t)$ is constant speed. 

Note that, for a smooth curve $\gamma : [0, l] \to S$ which satisfies the equation (\ref{Hamiltonian3}) for some constant $\alpha_{0} \in \mathbb{R}$, 
we have 
\begin{align*}
\frac{d}{dt}\left( g(\dot \gamma(t), \xi) \right) 
&= g(\nabla_{\dot \gamma(t)}\dot \gamma(t), \xi) + g(\dot \gamma(t), \nabla_{\dot \gamma(t)}\xi) \\
&= -2 \alpha_{0} g( \Phi(\dot \gamma(t)), \xi) + g( \dot \gamma(t), \Phi ( \dot \gamma(t))) \\
&= -2 \alpha_{0} g( \Phi(\dot \gamma(t)), \xi) + \frac{1}{2}d\eta( \dot \gamma(t), \dot \gamma(t)) = 0. 
\end{align*}
Hence we see that $\gamma$ is horizontal if and only if $\dot \gamma (t) \in D_{\gamma(0)}$. 
Now for each smooth horizontal curve $\gamma : [0, l] \to S$ which satisfies the equation (\ref{Hamiltonian3}), define $\alpha(t) := g(\dot \gamma(t) + \alpha_{0}\xi)$ and $\Gamma(t) := (\gamma(t), \alpha(t))$. 
Then we can easily check that the curve $\Gamma(t)$ satisfies the equation (\ref{Hamiltonian}), that is, $\gamma$ is a normal geodesic. 
This shows the following 
\begin{prop}
A smooth curve $\gamma :[0, l] \to S$ is a normal geodesic if and only if it satisfies the equation (\ref{Hamiltonian3}) for some constant $\alpha_{0} \in \mathbb{R}$ and $\dot \gamma(0) \in D_{\gamma(0)}$. 
\end{prop}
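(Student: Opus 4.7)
The plan is to verify the equivalence directly; almost all of the heavy lifting has already been carried out in the paragraphs leading up to the statement, so the proof is mostly one of organizing these observations into the two implications.

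For the forward direction, suppose $\gamma$ is a normal geodesic with cotangent lift $\Gamma(t) = (\gamma(t), \alpha(t))$ solving (\ref{Hamiltonian}). Writing $H_D$ in the canonical coordinates and differentiating yields the reduction (\ref{Hamiltonian2}). From the first equation of (\ref{Hamiltonian2}) I see that $\dot\gamma(t) = g^{-1}(\alpha) - \alpha(\xi)\xi$ lies in $D$ for every $t$, so in particular $\dot\gamma(0) \in D_{\gamma(0)}$. Setting $\alpha_0 := \alpha(\xi)$, the Killing property of $\xi$ together with the second equation of (\ref{Hamiltonian2}) shows $\alpha_0$ is constant along $\gamma$. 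Differentiating $\dot\gamma + \alpha_0\xi = g^{-1}(\alpha)$ along $\gamma$, invoking the Levi-Civita formula for $\dot\alpha_i$ coming from (\ref{Hamiltonian2}), and using $\Phi = \nabla\xi$ to replace $\nabla_{\dot\gamma}(\alpha_0\xi)$ by $\alpha_0\Phi(\dot\gamma)$, I arrive at (\ref{Hamiltonian3}).

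For the converse, I take a horizontal smooth curve $\gamma$ with $\dot\gamma(0) \in D_{\gamma(0)}$ satisfying (\ref{Hamiltonian3}) for some constant $\alpha_0 \in \mathbb{R}$, and I define
\begin{equation*}
\alpha(t) := g\bigl(\dot\gamma(t) + \alpha_0 \xi,\,\cdot\bigr) \in T^*_{\gamma(t)}S,\qquad \Gamma(t) := (\gamma(t), \alpha(t)).
\end{equation*}
The first Hamilton equation in (\ref{Hamiltonian2}) is immediate: $g^{-1}(\alpha) - \alpha(\xi)\xi = (\dot\gamma + \alpha_0\xi) - \alpha_0\xi = \dot\gamma$ since $\dot\gamma \in D$. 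For the second Hamilton equation, I compute $\dot\alpha_i$ by differentiating $\alpha_i = g_{ij}(\dot\gamma^j + \alpha_0\xi^j)$, substitute (\ref{Hamiltonian3}) in the form $\ddot\gamma^k + \Gamma^k_{jl}\dot\gamma^j\dot\gamma^l = -2\alpha_0\Phi(\dot\gamma)^k$, and use the identity $\partial g^{kj}/\partial x_i = -g^{ka}g^{jb}\partial g_{ab}/\partial x_i$ to rewrite things in terms of $\alpha$. The contribution of $\alpha_0\xi$ is handled by the Killing equation for $\xi$ and the identities $\Phi = \nabla\xi$, $g(\Phi X, Y) = -g(X, \Phi Y)$. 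After these substitutions one recovers $\dot\alpha_i = -\frac{1}{2}(\partial g^{kj}/\partial x_i)\alpha_k\alpha_j$, which is (\ref{Hamiltonian2}).

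The main obstacle is the local-coordinate bookkeeping in the second implication: translating the intrinsic equation (\ref{Hamiltonian3}) into the coordinate evolution of $\alpha_i = g_{ij}(\dot\gamma^j+\alpha_0\xi^j)$ and confirming, after the Christoffel symbols are expanded via the Koszul formula and the contributions involving $\xi$ are collapsed using the Killing identity and $\Phi = \nabla\xi$, that the two terms $-2\alpha_0\Phi(\dot\gamma)$ and $\nabla_{\dot\gamma}(\alpha_0\xi) = \alpha_0\Phi(\dot\gamma)$ combine cleanly so as to leave only the expression $-\frac{1}{2}(\partial g^{kj}/\partial x_i)\alpha_k\alpha_j$. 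Once this computation is executed the proposition is established.
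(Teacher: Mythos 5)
Your forward direction matches the paper's reasoning exactly, and your overall strategy for the converse (define the lift $\alpha(t) = g(\dot\gamma(t)+\alpha_0\xi,\cdot)$ and verify the Hamiltonian equations) is also the paper's. But there is one genuine gap in the converse: you begin by ``taking a horizontal smooth curve,'' whereas the hypothesis of the proposition only gives you $\dot\gamma(0)\in D_{\gamma(0)}$ at the single time $t=0$. Horizontality for all $t$ is not an assumption you are entitled to; it has to be \emph{derived} from equation (\ref{Hamiltonian3}). And you really do need it: your verification of the first Hamilton equation uses $\alpha(\xi)=\alpha_0$, which holds precisely because $g(\dot\gamma(t),\xi)=0$ at time $t$, not just at $t=0$. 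Without that, $\alpha(\xi)=g(\dot\gamma,\xi)+\alpha_0\neq\alpha_0$ and the identity $g^{-1}(\alpha)-\alpha(\xi)\xi=\dot\gamma$ fails.

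The missing step is short but is the one non-bookkeeping ingredient of the converse, and the paper carries it out explicitly: for any curve satisfying (\ref{Hamiltonian3}) with $\alpha_0$ constant,
\begin{equation*}
\frac{d}{dt}\,g(\dot\gamma(t),\xi)
= g(\nabla_{\dot\gamma}\dot\gamma,\xi) + g(\dot\gamma,\nabla_{\dot\gamma}\xi)
= -2\alpha_0\, g(\Phi(\dot\gamma),\xi) + \tfrac{1}{2}d\eta(\dot\gamma,\dot\gamma) = 0,
\end{equation*}
since $\Phi$ takes values in $D$ (so the first term vanishes) and $d\eta$ is antisymmetric. Hence $g(\dot\gamma(t),\xi)$ is constant, and the initial condition $\dot\gamma(0)\in D_{\gamma(0)}$ propagates to horizontality of the whole curve. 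Insert this computation before defining $\alpha(t)$ and your argument closes; the remaining coordinate verification of the second Hamilton equation is at the same level of detail as the paper's own ``we can easily check.''
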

A subbundle $E \subset TS$ of the tangent bundle of $S$ is called {\it strong bracket generating} if 
for each $p \in S$ and each nonzero local section $X$ of $E$ around $p$ we have $E_{p} + [X,E]_{p} = T_{p}S$. 
For a Sasaki manifold $(S, g)$, the corresponding contact structure $D$ is strong bracket generating. 
Indeed, for each $p \in S$ and nonzero local section $X$ of $D$ around $p$ we have 
\begin{align*}
g([X, \Phi(X)], \xi) 
&= g(\nabla_{X}\Phi (X), \xi) - g(\nabla_{\Phi (X)}X, \xi) \\
&= -g(\Phi (X), \Phi (X)) + g(X, \Phi^{2}(X)) \\
&= -2g(X, X) \neq 0. 
\end{align*}
This shows $\xi_{p} \in D_{p} + [X, D]_{p}$ and hence we obtain $T_{p}S = D_{p} + [X, D]_{p}$. 

As in the case of Riemannian geometry, every normal geodesic is locally a unique length minimizing curve. 
By the fact that $D$ is strong bracket generating, Strichartz proved that Hopf-Rinow type theorem for the sub-Riemannian manifold $(S, D, g_{D})$ still holds, i.e., any two points on a complete Sasaki manifold can be joined by a length minimizing normal geodesic (See \cite{RS1} and \cite{RS2}). 

\begin{rem}
The assumption that $D$ is strong bracket generating is essential. 
Indeed, for a sub-Riemannian manifold $(S, D, g_{D})$ such that $D$ is {\it not} strong bracket generating, the Hopf-Rinow type theorem does not hold in general. 
There is some examples of length minimizing horizontal curves which are not normal geodesics. 
These examples can be seen in \cite{M}. 
\end{rem}

\subsection{Second Variational formula}
For each $p, q \in S$, consider a functional $E_{D}: \Omega(p, q, D) \to \mathbb{R}$ defined by 
\begin{equation*}
E_{D}(\gamma) := \frac{1}{2}\int_{0}^{l}g(\dot \gamma(t), \dot \gamma(t)) dt, 
\end{equation*}
which is called the {\it energy} of a horizontal curve $\gamma$. 
It is well known in Riemannian geometry, for a constant speed horizontal curve $\gamma$, $\gamma$ minimizes the length functional $L_{D}: \Omega(p, q, D) \to \mathbb{R}$ if and only if it minimizes the energy functional. 
In particular, a length minimizing normal geodesic joining $p$ to $q$ is a energy minimizing curve. 
We shall give a second variational formula of the energy functional on $\Omega(p, q, D)$ for a normal geodesic. 
In this subsection, we assume that every curve $\gamma$ is regular, that is, $\gamma$ is smooth and $|\dot \gamma(t)| \neq 0$ for all $t \in [0, l]$. 

Recall that a {\it variation} of a smooth curve $\gamma : [0, l] \to S$ is a smooth mapping $f: (-\varepsilon, \varepsilon) \times [0, l] \to S$ which satisfies $f(s, 0) = \gamma(0)$, $f(s, l) = \gamma(l)$ and $f(0, t) = \gamma(t)$. 
A smooth vector field $V(t)$ along $\gamma(t)$ is called a {\it variation vector field} of $\gamma$ if it satisfies $V(0) = V(l) = 0$. 
Given a variation $f(s, t)$ of $\gamma$, we can construct a variation vector field $V(t)$ by $V(t) := \frac{\partial f}{\partial s}(s, t)|_{s = 0}$. 
Conversely, for each variation vector field $V(t)$ of $\gamma$, there exists a variation $f(s, t)$ of $\gamma$ whose associated variation vector field is $V(t)$. 

For a horizontal curve $\gamma \in \Omega(p, q, D)$, let $f(s, t): (-\varepsilon, \varepsilon) \times [0, l] \to S$ be a variation of $\gamma$. 
A variation $f(s, t)$ is said to be {\it admissible} if $\frac{\partial f}{\partial t} \in D$ for each $(s, t) \in (-\varepsilon, \varepsilon) \times [0, l]$. 
Similarly, a variation vector field $V(t)$ of $\gamma$ is said to be {\it admissible} if there exists an admissible variation $f(s, t)$ whose variation vector field is $V(t)$. 
A similar argument of Ritor${\rm \acute{e}}$ and Rosales in \cite{RR} tells us that the set $T_{\gamma}\Omega(p, q, D)$ of all admissible variation vector fields of $\gamma$ is given by 
\begin{equation}\label{tangent}
T_{\gamma}\Omega(p, q, D) = \left\{V(t) \in T_{\gamma}\Omega(p, q)\ |\ \frac{d}{dt}g(V(t), \xi) = 2g(V(t), \Phi(\dot \gamma(t))) \right\}, 
\end{equation}
where $T_{\gamma}\Omega(p, q)$ is the set of all variation vector fields of $\gamma$. 

\begin{prop}
Let $\gamma: [0, l] \to S$ be a normal geodesic. 
For each admissible variation $f(s, t)$ of $\gamma$, define $E_{D}(s) := E_{D}(f(s, t))$. 
Then 
\begin{align}\label{secondvariation}
E_{D}^{\prime \prime}(0) 
&= -\int_{0}^{l}g\left( V, \nabla_{\dot \gamma(t)}\nabla_{\dot \gamma(t)}V + R(V, \dot \gamma(t))\dot \gamma(t) \right)dt \\
&\quad + 2\alpha_{0}\int_{0}^{l}\left\{ \eta(V)g(V, \dot \gamma(t)) + g(\nabla_{\dot \gamma(t)}V, \Phi(V))\right\}dt. \notag
\end{align}
\end{prop}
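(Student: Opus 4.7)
The strategy is to follow the Riemannian second-variation computation, tracking the extra term produced because $\gamma$ is not a Riemannian geodesic. Write $T=\partial_t f$ and $V=\partial_s f$; then $[\partial_s,\partial_t]=0$ gives $\nabla_s T=\nabla_t V$ and $\nabla_s\nabla_t V-\nabla_t\nabla_s V=R(V,T)V$. The boundary conditions $f(s,0)=\gamma(0)$, $f(s,l)=\gamma(l)$ force both $V$ and $\nabla_s V$ to vanish at $t=0,l$.

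Differentiating $E_D(s)=\frac{1}{2}\int_0^l g(T,T)\,dt$ twice in $s$ and integrating by parts in $t$ (boundary terms killed by the above) yields
\begin{equation*}
E_D''(s)=\int_0^l\bigl\{|\nabla_t V|^2+g(R(V,T)V,T)-g(\nabla_s V,\nabla_t T)\bigr\}\,dt.
\end{equation*}
At $s=0$ I insert the normal geodesic equation (\ref{Hamiltonian3}), $\nabla_t T=-2\alpha_0\Phi(\dot\gamma)$, so the last integrand becomes $2\alpha_0\,g(\nabla_V V,\Phi(\dot\gamma))$. A further integration by parts on $|\nabla_t V|^2$ together with the symmetry $R(V,T,V,T)=-R(V,T,T,V)$ rearranges the first two summands into $-g(V,\nabla_T\nabla_T V+R(V,T)T)$, producing the Riemannian-looking half of the claimed identity.

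The crux is to rewrite $g(\nabla_V V,\Phi(\dot\gamma))$ in terms depending only on $V$ and $\nabla_T V$, not on second-order data of $f$. Differentiating the admissibility condition $\eta(T)\equiv 0$ twice in $s$ and invoking the Sasakian identity $(\nabla_X\Phi)(Y)=\eta(Y)X-g(X,Y)\xi$, I get at $s=0$ that
\begin{equation*}
g(\nabla_V V,\Phi T)=\eta(V)g(V,T)+2g(\nabla_T V,\Phi V)+\eta(\nabla_V\nabla_T V).
\end{equation*}
To handle $\eta(\nabla_V\nabla_T V)$, apply the curvature commutator $\nabla_V\nabla_T V=\nabla_T\nabla_V V+R(V,T)V$. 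The identity $(\nabla_X\Phi)(Y)=R(X,\xi)Y$ together with the pair-swap symmetry $R(V,T,V,\xi)=R(V,\xi,V,T)$ yields $\eta(R(V,T)V)=\eta(V)g(V,T)$, while $\nabla_T\xi=\Phi T$ gives $\eta(\nabla_T\nabla_V V)=\frac{d}{dt}\eta(\nabla_V V)-g(\nabla_V V,\Phi T)$. Solving the resulting equation for $g(\nabla_V V,\Phi T)$ delivers
\begin{equation*}
g(\nabla_V V,\Phi T)=\eta(V)g(V,T)+g(\nabla_T V,\Phi V)+\frac{1}{2}\frac{d}{dt}\eta(\nabla_V V).
\end{equation*}
Integrating over $[0,l]$, the total-derivative piece contributes the boundary value $\frac{1}{2}[\eta(\nabla_V V)]_0^l$, which vanishes because $\nabla_V V=0$ at the fixed endpoints.

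The main obstacle is precisely this elimination: the acceleration $\nabla_V V$ is data beyond $V$, yet the target integrand is intrinsic to $V$ and $\nabla_T V$. Its removal rests on combining the second-order consequences of admissibility with the two Sasakian identities $(\nabla_X\Phi)(Y)=R(X,\xi)Y=\eta(Y)X-g(X,Y)\xi$, which conspire to collapse the spurious acceleration term into a single total $t$-derivative that dies on integration.
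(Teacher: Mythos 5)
Your argument is correct and reaches the stated formula. The overall strategy coincides with the paper's: both reduce to the standard Riemannian second variation plus the single extra term $-\int_0^l g(\nabla_V V,\nabla_{\dot\gamma}\dot\gamma)\,dt = 2\alpha_0\int_0^l g(\nabla_V V,\Phi(\dot\gamma))\,dt$, and both then eliminate the acceleration $\nabla_V V$ by exhibiting the discrepancy as an exact $t$-derivative that dies at the fixed endpoints. Where you differ is in how that key identity is produced. The paper computes the mixed derivative $\frac{d^2}{ds\,dt}\bigl(\eta(\partial_s f)\bigr)$, expands it with $\nabla\xi=\Phi$ and $(\nabla_X\Phi)(Y)=\eta(Y)X-g(X,Y)\xi$, and integrates in $t$; admissibility enters only once, to identify $\eta(\nabla_s\partial_t f)$ with $g(\nabla_t\xi,\partial_s f)$, and no curvature identity is needed. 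You instead differentiate the constraint $\eta(\partial_t f)\equiv 0$ twice in $s$, which leaves the second-order term $\eta(\nabla_V\nabla_T V)$, and you then need the commutator $\nabla_V\nabla_T V=\nabla_T\nabla_V V+R(V,T)V$ together with the pair symmetry $R(V,T,V,\xi)=R(V,\xi,V,T)$ and the identity $R(X,\xi)Y=\eta(Y)X-g(X,Y)\xi$ to collapse it into $\frac{1}{2}\frac{d}{dt}\eta(\nabla_V V)$ plus intrinsic terms. Your route costs an extra appeal to curvature symmetries but buys a genuinely pointwise identity,
\begin{equation*}
g(\nabla_V V,\Phi(\dot\gamma))=\eta(V)g(V,\dot\gamma)+g(\nabla_{\dot\gamma}V,\Phi(V))+\tfrac{1}{2}\tfrac{d}{dt}\eta(\nabla_V V),
\end{equation*}
of which the paper's (\ref{variation4}) is the integrated shadow; it also makes transparent exactly where admissibility is consumed. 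Both derivations rely on the same endpoint vanishing of $V$ and $\nabla_V V$, and both are sound.
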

\begin{proof}
At first, we have 
\begin{align*}
E_{D}^{\prime \prime}(s) 
&= \frac{1}{2}\frac{d^{2}}{ds^{2}}\int_{0}^{l}g\left(\frac{\partial f}{\partial t}, \frac{\partial f}{\partial t} \right)dt \\
&= \frac{d}{ds}\int_{0}^{l}g\left(\frac{D}{ds}\frac{\partial f}{\partial t}, \frac{\partial f}{\partial t} \right)dt \\
&= \frac{d}{ds}\int_{0}^{l}g\left(\frac{D}{dt}\frac{\partial f}{\partial s}, \frac{\partial f}{\partial t} \right)dt \\
&= \frac{d}{ds}\int_{0}^{l}\left\{ \frac{d}{dt}g\left(\frac{\partial f}{\partial s}, \frac{\partial f}{\partial t} \right) - g\left(\frac{\partial f}{\partial s}, \frac{D}{dt}\frac{\partial f}{\partial t} \right)\right\}dt \\
&= \int_{0}^{l}\frac{d^{2}}{ds dt}g\left(\frac{\partial f}{\partial s}, \frac{\partial f}{\partial t} \right)dt \\
&\quad  - \int_{0}^{l}g\left(\frac{D}{ds}\frac{\partial f}{\partial s}, \frac{D}{dt}\frac{\partial f}{\partial t} \right)dt - \int_{0}^{l}g\left(\frac{\partial f}{\partial s}, \frac{D}{ds}\frac{D}{dt}\frac{\partial f}{\partial t} \right)dt. 
\end{align*}
By summing the first and third terms, we obtain 
\begin{align*}
&\int_{0}^{l}\frac{d^{2}}{ds dt}g\left(\frac{\partial f}{\partial s}, \frac{\partial f}{\partial t} \right)dt - \int_{0}^{l}g\left(\frac{\partial f}{\partial s}, \frac{D}{ds}\frac{D}{dt}\frac{\partial f}{\partial t} \right)dt \\
&\quad = -\int_{0}^{l}g\left(\frac{\partial f}{\partial s}, \frac{D}{dt}\frac{D}{dt}\frac{\partial f}{\partial s} + R(\frac{\partial f}{\partial s}, \frac{\partial f}{\partial t})\frac{\partial f}{\partial t} \right)dt. \notag
\end{align*}
and hence 
\begin{align}\label{variation1}
E_{D}^{\prime \prime}(s) 
&= -\int_{0}^{l}g\left(\frac{\partial f}{\partial s}, \frac{D}{dt}\frac{D}{dt}\frac{\partial f}{\partial s} + R(\frac{\partial f}{\partial s}, \frac{\partial f}{\partial t})\frac{\partial f}{\partial t} \right)dt \\
&\quad - \int_{0}^{l}g\left(\frac{D}{ds}\frac{\partial f}{\partial s}, \frac{D}{dt}\frac{\partial f}{\partial t} \right)dt. \notag
\end{align}
We shall now calculate the second term of (\ref{variation1}). 
Because $\gamma$ is a normal geodesic, for the integrand we have 
\begin{equation}\label{variation2}
g( \frac{D}{ds}\frac{\partial f}{\partial s}, \frac{D}{dt}\frac{\partial f}{\partial t}) |_{s=0}
= g(\nabla_{V}V, \nabla_{\dot \gamma(t)}\dot \gamma(t)) 
= -2\alpha_{0}g(\nabla_{V}V, \Phi(\dot \gamma(t)))
\end{equation}
by substituting $0$ to $s$. 
To integrate both sides, notice that 
\begin{align*}
\frac{d^{2}}{ds dt}\left( \eta(\frac{\partial f}{\partial s})\right)
&= \frac{d}{ds}\left\{ g\left( \frac{D}{dt}\xi, \frac{\partial f}{\partial s}\right) + \eta(\frac{D}{dt}\frac{\partial f}{\partial s})\right\} \\
&= \frac{d}{ds}\left\{ g\left( \frac{D}{dt}\xi, \frac{\partial f}{\partial s}\right) + \eta(\frac{D}{ds}\frac{\partial f}{\partial t})\right\} \\
&= 2\frac{d}{ds}\left( g\left( \frac{D}{dt}\xi, \frac{\partial f}{\partial s}\right) \right) \\
&= 2\left\{ g\left( \frac{D}{ds}\frac{D}{dt}\xi, \frac{\partial f}{\partial s}\right) + g \left( \frac{D}{dt}\xi, \frac{D}{ds}\frac{\partial f}{\partial s} \right) \right\}. 
\end{align*}
Furthermore, by 
\begin{align*}
g\left( \frac{D}{ds}\frac{D}{dt}\xi, \frac{\partial f}{\partial s}\right) 
&= g\left( \frac{D}{ds}\Phi(\frac{\partial f}{\partial t}), \frac{\partial f}{\partial s}\right) \\
&= g\left( \left( \frac{D}{ds}\Phi \right)(\frac{\partial f}{\partial t}), \frac{\partial f}{\partial s}\right) 
+ g\left( \Phi \left( \frac{D}{ds}\frac{\partial f}{\partial t}\right), \frac{\partial f}{\partial s}\right) \\
&= \eta(\frac{\partial f}{\partial t})g\left( \frac{\partial f}{\partial s}, \frac{\partial f}{\partial s}\right) 
- \eta(\frac{\partial f}{\partial s})g\left( \frac{\partial f}{\partial s}, \frac{\partial f}{\partial t}\right) \\
&\quad + g\left( \Phi \left( \frac{D}{ds}\frac{\partial f}{\partial t}\right), \frac{\partial f}{\partial s}\right) \\
&= - \eta(\frac{\partial f}{\partial s})g\left( \frac{\partial f}{\partial s}, \frac{\partial f}{\partial t}\right) 
+ g\left( \Phi \left( \frac{D}{dt}\frac{\partial f}{\partial s}\right), \frac{\partial f}{\partial s}\right), 
\end{align*}
we obtain the following equality; 
\begin{align}\label{variation3}
\frac{1}{2}\frac{d^{2}}{ds dt}\left( \eta(\frac{\partial f}{\partial s})\right) 
&= -\eta(\frac{\partial f}{\partial s})g\left( \frac{\partial f}{\partial s}, \frac{\partial f}{\partial t}\right) 
+ g\left( \Phi \left( \frac{D}{dt}\frac{\partial f}{\partial s}\right), \frac{\partial f}{\partial s}\right) \\
&\quad + g \left( \frac{D}{dt}\xi, \frac{D}{ds}\frac{\partial f}{\partial s} \right). \notag
\end{align}
Since $\frac{\partial f}{\partial s}(s, 0) = \frac{\partial f}{\partial s}(s,l) = 0$, the integration of both sides of the equality (\ref{variation3}) with respect to $t$ leads us the following equality; 
\begin{equation*}
\int_{0}^{l}g \left( \frac{D}{dt}\xi, \frac{D}{ds}\frac{\partial f}{\partial s} \right)dt 
= \int_{0}^{l}\eta(\frac{\partial f}{\partial s})g\left( \frac{\partial f}{\partial s}, \frac{\partial f}{\partial t}\right) 
- g\left( \Phi \left( \frac{D}{dt}\frac{\partial f}{\partial s}\right), \frac{\partial f}{\partial s}\right). 
\end{equation*}
In particular, by substituting $0$ to $s$, we have 
\begin{align}\label{variation4}
\int_{0}^{l}g \left( \Phi(\dot \gamma(t)), \nabla_{V}V) \right)dt 
&= \int_{0}^{l}\eta(V)g\left( V, \dot \gamma(t) \right) 
- g\left( \Phi \left( \nabla_{\dot \gamma(t)}V\right), V \right) dt \\
&= \int_{0}^{l}\eta(V)g\left( V, \dot \gamma(t) \right) 
+ g\left( \nabla_{\dot \gamma(t)}V, \Phi(V) \right) dt. \notag
\end{align}
Combine the equality (\ref{variation1}), (\ref{variation2}) and (\ref{variation4}), we obtain 
\begin{eqnarray*}
E_{D}^{\prime \prime}(0)
&=& -\int_{0}^{l}g\left(V, \nabla_{\dot \gamma(t)}\nabla_{\dot \gamma(t)}V + R(V, \dot \gamma(t))\dot \gamma(t) \right)dt \\
& &+ 2\alpha_{0}\int_{0}^{l}\left\{ \eta(V)g\left( V, \dot \gamma(t) \right) 
+ g\left( \nabla_{\dot \gamma(t)}V, \Phi(V) \right) \right\}dt, 
\end{eqnarray*}
which is the desired formula. 
\qed \end{proof}

\subsection{A proof of Theorem \ref{Myers}}
Our proof of Theorem \ref{Myers} is based on the classical proof of Myers' theorem. 

Let $p, q$ be an arbitrary pair of points of $S$ and $\gamma : [0, l] \to S$ be a minimizing normal geodesic joining $p$ to $q$ (Since $(S, g)$ is complete, such a normal geodesic always exists). 
We may assume that $|\dot \gamma(t)| = 1$ for all $t \in [0, l]$. 
Then $\gamma$ is a minimizer of $E_{D}: \Omega(p, q, D) \to \mathbb{R}$ and hence $E_{D}^{\prime \prime}(0) \geq 0$ for each admissible variation $f(s, t): (-\varepsilon, \varepsilon) \times [0, l] \to S$ of $\gamma$. 
Choose tangent vectors $X_{1}, \cdots, X_{2(n-1)} \in T_{p}S$ such that $\{X_{1}, \cdots, X_{2(n-1)}, \dot \gamma(0), \Phi(\dot \gamma(0))\}$ is a orthonormal basis of $D$. 
For each $i = 1, \cdots, 2(n-1)$, consider the following linear differential equation for $X_{i}(t) \in D_{\gamma(t)}$ defined by 
\begin{equation*}
\nabla^{T}_{\dot \gamma(t)}X_{i}(t) = \nabla_{\dot \gamma(t)}X_{i}(t) - g(\nabla_{\dot \gamma(t)}X_{i}(t), \xi)\xi = 0\ \text{and}\ X_{i}(0) = X_{i}. 
\end{equation*}
By existence and uniqueness theorem for linear ordinary differential equations, there is a unique global solution $X_{i}(t) \in D$, $t \in [0, l]$. 
\begin{lem}
$\{X_{1}(t), \cdots, X_{2(n-1)}, \dot\gamma(t), \Phi(\dot\gamma(t)) \}$ is a orthonormal basis of $T_{\gamma(t)}S$ for all $t \in [0, l]$. 
\end{lem}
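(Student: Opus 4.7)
Noting that $D_{\gamma(t)}$ is the $2n$-dimensional orthogonal complement of $\xi_{\gamma(t)}$ in $T_{\gamma(t)}S$, it suffices to show that $\{X_{1}(t),\ldots,X_{2(n-1)}(t),\dot\gamma(t),\Phi(\dot\gamma(t))\}$ is an orthonormal frame of $D_{\gamma(t)}$; adjoining the Reeb vector $\xi_{\gamma(t)}$ then completes it to an orthonormal basis of $T_{\gamma(t)}S$. The $X_{i}(t)$ already lie in $D$ by the hypothesis of the ODE, so the plan is to verify three statements: (a) $\{\dot\gamma(t),\Phi(\dot\gamma(t))\}$ is orthonormal; (b) $\{X_{i}(t)\}_{i=1}^{2(n-1)}$ is orthonormal; (c) each $X_{i}(t)$ is orthogonal to both $\dot\gamma(t)$ and $\Phi(\dot\gamma(t))$.

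Part (a) is nearly immediate: $|\dot\gamma(t)|\equiv 1$ since $\gamma$ is a constant-speed normal geodesic (from (\ref{Hamiltonian3}) together with $\Phi(\dot\gamma)\perp\dot\gamma$), while the identities $g(\Phi X,Y)=-g(X,\Phi Y)$ and $g(\Phi X,\Phi Y)=g(X,Y)-\eta(X)\eta(Y)$ for $X,Y\in D$ force $|\Phi(\dot\gamma)|=1$ and $g(\dot\gamma,\Phi(\dot\gamma))=0$. For (b), the transverse connection $\nabla^{T}$ is metric compatible on $D$ (a direct consequence of the definition and of $\nabla_{\dot\gamma}\xi=\Phi(\dot\gamma)$), so for $X_{i}(t),X_{j}(t)\in D$ with $\nabla^{T}_{\dot\gamma}X_{i}=\nabla^{T}_{\dot\gamma}X_{j}=0$ the inner product $g(X_{i},X_{j})$ is independent of $t$ and equals its prescribed value at $t=0$.

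The core of the argument is (c). From (\ref{Hamiltonian3}) one reads off $\nabla^{T}_{\dot\gamma}\dot\gamma=-2\alpha_{0}\,\Phi(\dot\gamma)$ (the right-hand side already lies in $D$), and a short computation using $(\nabla_{X}\Phi)(Y)=g(\xi,Y)X-g(X,Y)\xi$ together with $\Phi^{2}=-\mathrm{id}+\eta\otimes\xi$ yields $\nabla^{T}_{\dot\gamma}(\Phi(\dot\gamma))=2\alpha_{0}\,\dot\gamma$. Setting $a(t):=g(X_{i}(t),\dot\gamma(t))$ and $b(t):=g(X_{i}(t),\Phi(\dot\gamma(t)))$ and using $\nabla^{T}_{\dot\gamma}X_{i}=0$, these two formulas produce the closed linear system
\begin{equation*}
\dot a=-2\alpha_{0}\,b,\qquad \dot b=2\alpha_{0}\,a,
\end{equation*}
with $a(0)=b(0)=0$ by the choice of the initial orthonormal frame at $p$; uniqueness for linear ODE then forces $a\equiv b\equiv 0$.

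The anticipated obstacle is exactly step (c): because $\gamma$ is a \emph{normal} rather than a Riemannian geodesic, $\dot\gamma$ is not $\nabla^{T}$-parallel along itself, and hence the inner products of $X_{i}$ with $\dot\gamma$ and with $\Phi(\dot\gamma)$ do not decouple. What saves the argument is the skew-symmetric form of the coupling, which comes from the matched signs in $\nabla^{T}_{\dot\gamma}\dot\gamma=-2\alpha_{0}\Phi(\dot\gamma)$ and $\nabla^{T}_{\dot\gamma}(\Phi(\dot\gamma))=2\alpha_{0}\dot\gamma$, and is precisely what a linear skew system needs in order to keep vanishing initial data at zero.
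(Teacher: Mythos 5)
Your proof is correct and follows essentially the same route as the paper: orthonormality of $\{X_{i}(t)\}$ via metric compatibility of $\nabla^{T}$, and orthogonality to $\dot\gamma$ and $\Phi(\dot\gamma)$ via the same skew-symmetric linear system $\dot a=-2\alpha_{0}b$, $\dot b=2\alpha_{0}a$ with vanishing initial data. Your remark that the listed $2n$ vectors only frame $D_{\gamma(t)}$ and must be completed by $\xi$ to span $T_{\gamma(t)}S$ is a correct reading of a slight looseness in the statement, and working entirely with $\nabla^{T}$ rather than mixing $\nabla$ and $\nabla^{T}$ is a purely cosmetic difference.
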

\begin{proof}
First note that $g(\dot \gamma(t), \dot \gamma(t)) = g(\Phi(\dot \gamma(t)), \Phi(\dot \gamma(t))) = 1$ and $g(\Phi(\dot \gamma(t)), \dot \gamma(t)) = 0$. 
Furthermore, since $g(X_{i}, X_{j}) = \delta_{ij}$ and 
\begin{eqnarray*}
\frac{d}{dt}g(X_{i}(t), X_{j}(t)) 
&=& g(\frac{D}{dt}X_{i}(t), X_{j}(t)) + g(X_{i}(t), \frac{D}{dt}X_{j}(t)) \\
&=& g(\nabla^{T}_{\dot \gamma(t)}X_{i}(t), X_{j}(t)) + g(X_{i}(t), \nabla^{T}_{\dot \gamma(t)}X_{j}(t)) = 0, 
\end{eqnarray*}
we see that $g(X_{i}(t), X_{j}(t)) = \delta_{ij}$ for each $t \in [0, l]$. 
Hence it is sufficient to show that $X_{i}(t)$ is perpendicular to both $\dot \gamma(t)$ and $\Phi(\gamma(t))$. 

Define 
\begin{equation*}
f(t) = {^{t}(f_{1}(t), f_{2}(t))} = {^{t}(g(X_{i}(t), \dot \gamma(t)), g(X_{i}(t), \Phi(\dot \gamma(t))))}. 
\end{equation*}
Then, we have 
\begin{align*}
\frac{df_{1}}{dt} 
&= \frac{d}{dt}g(X_{i}(t), \dot \gamma(t)) 
= g(\nabla_{\dot \gamma(t)}X_{i}(t), \dot \gamma(t)) + g(X_{i}(t), \nabla_{\dot \gamma(t)}\dot \gamma(t)) \\
&= -2\alpha_{0}g(X_{i}(t), \Phi(\dot \gamma(t))) = -2\alpha_{0}f_{2}(t), 
\end{align*}
and 
\begin{align*}
\frac{df_{2}}{dt} 
&= \frac{d}{dt}g(X_{i}(t), \Phi(\dot \gamma(t))) 
= g(\nabla_{\dot \gamma(t)}X_{i}(t), \Phi(\dot \gamma(t))) + g(X_{i}(t), \nabla_{\dot \gamma(t)}\Phi(\dot \gamma(t))) \\
&= 2\alpha_{0}g(X_{i}(t), \dot \gamma(t)) = 2\alpha_{0}f_{1}(t). 
\end{align*}
Thus the function $f(t) = {^{t}(f_{1}(t), f_{2}(t))}$ satisfies the following ordinary differential equation; 
\begin{equation*}
\begin{cases}
\; \frac{d}{dt}f(t) 
= \left(
\begin{array}{cc}
0           & -2\alpha_{0}\\
2\alpha_{0} & 0           \\
\end{array}
\right) f(t), \\
\; f(0) = {^{t}(0, 0)}. 
\end{cases}
\end{equation*}
This shows that $f(t) = 0$ for all $t \in [0, l]$ and hence we obtain the desired result. 
\qed \end{proof}
For each $i = 1, \cdots, 2(n-1)$, define $h(t) := \sin\left( \frac{2\pi t}{l} \right)$ and $V_{i}(t) := h(t)X_{i}(t)$. 
Since $X_{i}(t) \in D$ is perpendicular to $\Phi(\dot \gamma(t))$, we see that $V_{i}(t) \in T_{\gamma}\Omega(p, q, D)$. 
Let $f_{i}(s, t)$ be an admissible variation of $\gamma$ whose variation vector field is $V_{i}(t)$. 
Let us calculate the second variation $E_{D}^{\prime \prime}(0) = \frac{d^{2}}{ds}E_{D}(f_{i}(s, t)) \geq 0$ explicitly. 
Note that since $X_{i}(t) \in D$ and $\nabla^{T}_{\dot \gamma(t)}X_{i}(t) = 0$ we have $\eta(V_{i}(t)) = 0$ and 
\begin{align*}
g(\nabla_{\dot \gamma(t)}V_{i}(t), \Phi(V_{i}(t))) 
&= h(t)g(h^{\prime}(t)X_{i}(t) + h(t)\nabla_{\dot \gamma(t)}X_{i}(t), \Phi(X_{i}(t))) \\
&= h(t)h^{\prime}(t)g(X_{i}(t), \Phi(X_{i}(t))) 
+ h^{2}(t)g(\nabla_{\dot \gamma(t)}X_{i}(t), \Phi(X_{i}(t))) \\
&= h(t)h^{\prime}(t)d\eta(X_{i}(t), X_{i}(t)) 
+ h^{2}(t)g(\nabla^{T}_{\dot \gamma(t)}X_{i}(t), \Phi(X_{i}(t))) \\
&= 0. 
\end{align*}
Hence for $V_{i}(t)$ we have 
\begin{equation}\label{variationD1}
E_{D}^{\prime \prime}(0) = -\int_{0}^{l}g\left( V_{i}, \nabla_{\dot \gamma(t)}\nabla_{\dot \gamma(t)}V_{i} + R(V_{i}, \dot \gamma(t))\dot \gamma(t) \right)dt. 
\end{equation}
Now we can calculate easily 
\begin{align*}
\nabla_{\dot \gamma(t)}\nabla_{\dot \gamma(t)}V_{i}(t) 
&= \nabla_{\dot \gamma(t)}\left( h^{\prime}(t)X_{i}(t) + h(t)\nabla_{\dot \gamma(t)}X_{i}(t)\right) \\
&= \nabla_{\dot \gamma(t)}\left( h^{\prime}(t)X_{i}(t) + h(t)g(\nabla_{\dot \gamma(t)}X_{i}(t), \xi)\xi \right) \\
&= h^{\prime \prime}(t)X_{i}(t) + h^{\prime}g(\nabla_{\dot \gamma(t)}X_{i}(t), \xi)\xi \\
&\quad + \frac{d}{dt}\left( h(t)g(\nabla_{\dot \gamma(t)}X_{i}(t), \xi) \right)\xi + h(t)g(\nabla_{\dot \gamma(t)}X_{i}(t), \xi) \Phi(\dot \gamma(t)). 
\end{align*}
Since $V_{i}(t)$ is perpendicular to both $\xi$ and $\Phi(\dot \gamma(t))$, 
we obtain 
\begin{equation}\label{variationD2}
g(V_{i}(t), \nabla_{\dot \gamma(t)}\nabla_{\dot \gamma(t)}V_{i}(t)) = h(t)h^{\prime \prime}(t) = -(\frac{2\pi}{l})^{2}\sin(\frac{2\pi t}{l}). 
\end{equation}
Similarly we have 
\begin{align}\label{variationD3}
g(V_{i}(t), R(V_{i}(t), \dot \gamma(t))\dot \gamma(t)) 
&= \sin^{2}(\frac{2\pi t}{l})g(X_{i}(t), R(X_{i}(t), \dot \gamma(t))\dot \gamma(t)) \\ \notag
&= \sin^{2}(\frac{2\pi t}{l})R(X_{i}(t), \dot \gamma(t), \dot \gamma(t), X_{i}(t)) \\ \notag
&= \sin^{2}(\frac{2\pi t}{l}) R^{T}(X_{i}(t), \dot \gamma(t), \dot \gamma(t), X_{i}(t)) \notag
\end{align}
by equation (\ref{curvature}). 
By substituting (\ref{variationD2}) and (\ref{variationD3}) to (\ref{variationD1}) we obtain the following inequality;
\begin{equation}\label{conclusionD}
0 
\leq E_{D}^{\prime \prime}(0) 
= \int_{0}^{l}\sin^{2}(\frac{2\pi t}{l}) \left\{ (\frac{2\pi}{l})^{2} - R^{T}(X_{i}(t), \dot \gamma(t), \dot \gamma(t), X_{i}(t)) \right\}dt. 
\end{equation}

Next define 
\begin{equation*}
V(t) := h(t)\Phi(\dot \gamma(t)) + k(t)\xi
\end{equation*}
for smooth functions $h(t), k(t) : [0, l] \to \mathbb{R}$ with $h(0) = h(l) = k(0) = k(l) = 0$. 
Then the condition (\ref{tangent}) implies that $V(t) \in T_{\gamma}\Omega(p, q, D)$ if and only if $k^{\prime}(t) = 2h(t)$. 
We suppose that $h(t) := \sin (\frac{2\pi t}{l})$ and $k(t) := \frac{l}{\pi}(1 - \cos (\frac{2 \pi t}{l}))$. 
Then we can easily check that $k^{\prime}(t) = 2h(t)$. 
At first we have 
\begin{align*}
\nabla_{\dot \gamma(t)}V(t) 
&= \nabla_{\dot \gamma(t)}h(t)\Phi(\dot \gamma(t)) + \nabla_{\dot \gamma(t)}k(t)\xi \\
&= h^{\prime}(t)\Phi(\dot \gamma(t)) + h(t)\left\{ (\nabla_{\dot \gamma(t)}\Phi)(\dot \gamma(t)) + \Phi(\nabla_{\dot \gamma(t)}\dot \gamma(t)) \right\} \\
&\quad + k^{\prime}(t)\xi + k(t)\Phi(\dot \gamma(t)) \\
&= h^{\prime}(t)\Phi(\dot \gamma(t)) + h(t)\left( -\xi + 2\alpha_{0} \dot \gamma(t) \right) + k^{\prime}(t)\xi + k(t)\Phi(\dot \gamma(t)) \\
&= (h^{\prime}(t) + k(t))\Phi(\dot \gamma(t)) + h(t)\xi + 2\alpha_{0}h(t)\dot \gamma(t). 
\end{align*}
In addition, by differentiating again we have 
\begin{align*}
\nabla_{\dot \gamma(t)} \left( (h^{\prime}(t) + k(t))\Phi(\dot \gamma(t)) \right) 
&= (h^{\prime \prime}(t) + k^{\prime}(t))\Phi(\dot \gamma(t)) + (h^{\prime}(t) + k(t)) \nabla_{\dot \gamma(t)}\Phi(\dot \gamma(t))\\
&= (h^{\prime \prime}(t) + k^{\prime}(t))\Phi(\dot \gamma(t)) \\
&\quad + (h^{\prime}(t) + k(t)) ( -\xi + 2\alpha_{0}\dot \gamma(t))\\
&= (h^{\prime \prime}(t) + k^{\prime}(t))\Phi(\dot \gamma(t)) \\
&\quad - (h^{\prime}(t) + k(t))\xi + 2\alpha_{0}(h^{\prime}(t) + k(t))\dot \gamma(t), 
\end{align*}
and 
\begin{align*}
\nabla_{\dot \gamma(t)}2\alpha_{0}h(t) \dot \gamma(t) 
&= 2\alpha_{0}\left( h^{\prime}(t) \dot \gamma(t) + h(t)\nabla_{\dot \gamma(t)} \dot \gamma(t)\right) \\
&= 2\alpha_{0}h^{\prime}(t) \dot \gamma(t) - (2\alpha_{0})^{2}h(t)\Phi( \dot \gamma(t)). 
\end{align*}
By combining them we obtain 
\begin{align*}
\nabla_{\dot \gamma(t)}\nabla_{\dot \gamma(t)}V(t) 
&= \left( h^{\prime \prime}(t) + 3h(t) - (2\alpha_{0})^{2}h(t) \right)\Phi(\dot \gamma(t)) \\
&\quad + 2\alpha_{0}(2h^{\prime}(t) + k(t))\dot \gamma(t) - k(t)\xi 
\end{align*}
and hence 
\begin{equation}\label{variationDD1}
g(V(t), \nabla_{\dot \gamma(t)}\nabla_{\dot \gamma(t)}V(t)) 
= h(t) \left( h^{\prime \prime}(t) + 3h(t) - (2\alpha_{0})^{2}h(t) \right) -k^{2}(t). 
\end{equation}
For the curvatures we have 
\begin{align}\label{variationDD2}
g(V(t), R(V(t), \dot \gamma(t))\dot \gamma(t)) 
&= h^{2}(t)g(\Phi(\dot \gamma(t)), R(\Phi(\dot \gamma(t)), \dot \gamma(t))\dot \gamma(t)) \\ \notag
&\quad + 2h(t)k(t)g(\Phi(\dot \gamma(t)), R(\xi, \dot \gamma(t))\dot \gamma(t)) \\ \notag
&\quad + k^{2}(t)g(\xi, R(\xi, \dot \gamma(t))\dot \gamma(t)) \\ \notag
&= h^{2}(t)R(\Phi(\dot \gamma(t)), \dot \gamma(t), \dot \gamma(t), \Phi(\dot \gamma(t))) + k^{2}(t). \\ \notag
&= h^{2}(t)R^{T}(\Phi(\dot \gamma(t)), \dot \gamma(t), \dot \gamma(t), \Phi(\dot \gamma(t))) - 3h^{2}(t) + k^{2}(t). \\ \notag
\end{align}
By substituting (\ref{variationDD1}) and (\ref{variationDD2}) to (\ref{secondvariation}), we obtain 
\begin{align}\label{conclusionDD}
0 &\leq E_{D}^{\prime \prime}(0) \\ \notag
%&= -\int_{0}^{l}g\left( V, \nabla_{\dot \gamma(t)}\nabla_{\dot \gamma(t)}V + R(%V, \dot \gamma(t))\dot \gamma(t) \right)dt \\ \notag
%&\quad +2\alpha_{0}\int_{0}^{l}\left\{ \eta(V)g(V, \dot \gamma(t)) + g(\nabla_{%\dot \gamma(t)}V, \Phi(V))\right\}dt \\ \notag
&= -\int_{0}^{l}\left\{ h(t) \left( h^{\prime \prime}(t) + 3h(t) - (2\alpha_{0})^{2}h(t) \right) -k^{2}(t)\right\}dt \\ \notag
&\quad - \int_{0}^{l}\left\{ h^{2}(t)R^{T}(\Phi(\dot \gamma(t)), \dot \gamma(t), \dot \gamma(t), \Phi(\dot \gamma(t))) - 3h^{2}(t) + k^{2}(t) \right\}dt \\ \notag
&\quad - (2\alpha_{0})^{2}\int_{0}^{l}h^{2}(t) dt \\ \notag
&= -\int_{0}^{l} \left\{ h(t)h^{\prime \prime}(t) + h^{2}(t) R^{T}(\Phi(\dot \gamma(t)), \dot \gamma(t), \dot \gamma(t), \Phi(\dot \gamma(t)))\right\}dt \\ \notag
&= \int_{0}^{l} \sin^{2}(\frac{2\pi t}{l})\left\{ (\frac{2\pi}{l})^{2} - R^{T}(\Phi(\dot \gamma(t)), \dot \gamma(t), \dot \gamma(t), \Phi(\dot \gamma(t))) \right\}dt. \notag
\end{align}
Finally, by summing (\ref{conclusionD}) and (\ref{conclusionDD}), we obtain 
\begin{equation*}
0 \leq \int_{0}^{l}\sin^{2}(\frac{2\pi t}{l})\left\{ (\frac{2\pi}{l})^{2}(2n - 1) - Ric^{T}(\dot \gamma(t), \dot \gamma(t)) \right\}dt. 
\end{equation*}
Furthermore, by assumption $Ric^{T} \geq \tau g^{T}$, 
\begin{equation*}
0 \leq \int_{0}^{l}\sin^{2}(\frac{2\pi t}{l})\left\{ (\frac{2\pi}{l})^{2}(2n - 1) - \tau \right\}dt. 
\end{equation*}
This shows that $0 \leq (\frac{2\pi}{l})^{2}(2n - 1) - \tau$ and hence 
\begin{equation*}
d_{D}(p, q) = l \leq 2\pi \sqrt{\frac{2n - 1}{\tau}}. 
\end{equation*}
Hence we obtain ${\rm diam}(S, d_{D}) \leq 2\pi \sqrt{\frac{2n - 1}{\tau}}$ and this completes the proof of Theorem \ref{Myers}. 

\section{A proof of Theorem B}
\subsection{Generalized Aubin's equation}
In this section we give a proof of Theorem B. 
Our proof is based on the arguments of Bando and Mabuchi in \cite{BM}. 
Throughout this section, we denote $(S, g)$ by a $(2n + 1)$-dimensional compact Sasaki manifold with $c_{1}^{B}(S) > 0$ and $c_{1}(D) = 0$ and $\mathcal{S} = (g, \xi, \eta, \Phi)$ by the associated Sasakian structure. 
By assumption, we may assume $[\rho^{T}]_{B} = (2n + 2)[\omega^{T}]_{B}$. 
Put $\mathscr{S}(g)$ to be the set of all Sasaki metric on $S$ which is compatible with $g$ and $\mathscr{H} := \{ \varphi \in C_{B}^{\infty}(S)\ |\ (g_{i \bar j}^{T} + \frac{\partial^{2} \varphi}{\partial z^{i}\partial \bar z^{j}})\ \text{is positive definite} \}$. 
Clearly $g_{\varphi} \in \mathscr{S}(g)$ for each $\varphi \in \mathscr{H}$. 
We denote by $\mathscr{E}$ the set of all Sasaki-Einstein metrics in $\mathscr{S}(g)$. 
Throughout this section, we assume $\mathscr{E} \neq \phi$. 

Let $V := \int_{S}(\frac{1}{2}d\eta)^{n} \wedge \eta$ and define the functionals $L_{\eta}$, $M_{\eta}$, $I_{\eta}$ and $J_{\eta}$ on $\mathscr{H}$ by 
\begin{align*}
L_{\eta}(\varphi) 
&:= \frac{1}{V}\int_{a}^{b}dt\int_{S}\dot{\varphi_{t}}(\frac{1}{2}d\eta_{\varphi_{t}})^{n} \wedge \eta_{\varphi_{t}} , \\
M_{\eta}(\varphi) 
&:= -\frac{1}{V}\int_{a}^{b}dt\int_{S}\dot{\varphi_{t}}(s^{T}(\varphi) - n(2n + 2))(\frac{1}{2}d\eta_{\varphi_{t}})^{n} \wedge \eta_{\varphi_{t}}, \\
I_{\eta}(\varphi) 
&:= \frac{1}{V}\int_{S}\varphi\left( (\frac{1}{2}d\eta)^{n} \wedge \eta - (\frac{1}{2}d\eta_{\varphi})^{n} \wedge \eta_{\varphi}\right), \\
J_{\eta}(\varphi) 
&:= \frac{1}{V}\int_{a}^{b}dt \int_{S}\dot{\varphi_{t}}\left( (\frac{1}{2}d\eta)^{n} \wedge \eta - (\frac{1}{2}d\eta_{\varphi_{t}})^{n} \wedge \eta_{\varphi_{t}}\right), 
\end{align*}
where $\{\varphi_{t}\ |\ t \in [a, b]\}$ is an arbitrary piecewise smooth path in $\mathscr{H}$ such that $\varphi_{a} = 0$ and $\varphi_{b} = \varphi$. 
These are the ``Sasaki version" of the functionals defined on the space of K${\rm \ddot a}$hler potentials in \cite{BM} and have the similar properties to those. 
The precise definitions and basic properties can be seen in the Appendix. 

Since $[\rho^{T}]_{B} = (2n + 2)[\omega^{T}]_{B}$, there exists a unique basic function $h \in C_{B}^{\infty}(S)$ which satisfies $\rho^{T} - (2n + 2)\omega^{T} = \sqrt{-1}\partial_{B}\bar{\partial}_{B}h$ and $\int_{S}(e^{h} -1)(\frac{1}{2}d\eta)^{n} \wedge \eta = 0$. 
Consider the following one-parameter families of equations; 
\begin{align}
\frac{\det(g^{T}_{i \bar{j}} + \frac{\partial^{2}\psi_{t}}{\partial z^{i}\partial \bar z^{j}})}{\det(g^{T}_{i \bar{j}})} 
&= \exp(-t(2n+2)\psi_{t} + h);\quad t \in [0, 1], \label{MA1} \\
\frac{\det(g^{T}_{i \bar{j}} + \frac{\partial^{2}\varphi_{t}}{\partial z^{i}\partial \bar z^{j}})}{\det(g^{T}_{i \bar{j}})} 
&= \exp(-t(2n+2)\varphi_{t} -L_{\eta}(\varphi_{t}) + h);\quad t \in [0, 1], \label{MA2}
\end{align}
where solutions $\psi_{t}$ and $\varphi_{t}$ are both required to belong to $\mathscr{H}$. 
Note that, for both equations, these are just the transverse K${\rm \ddot a}$hler-Einstein equation at $t=1$. 
As a remark in \cite{BM}, there is no difference between (\ref{MA1}) and (\ref{MA2}) in finding solutions for $t \neq 0$. 
\begin{rem}\label{remark_positive}
Choose an arbitrary $t \in [0, 1]$. 
Let $\psi_{t}$ (resp. $\varphi_{t}$) be a solution of (\ref{MA1}) (resp. (\ref{MA2})) and $g_{t}$ be the Sasaki metric corresponding to the Sasaki structure $\eta_{\psi_{t}}$ (resp. $\eta_{\varphi_{t}} $). 
Then $g_{t}$ satisfies $\rho_{t}^{T} = t(2n + 2)\omega^{T}_{t} + (1-t)(2n + 2)\omega^{T}$, and in particular we have $\rho_{t}^{T} - t(2n + 2)\omega^{T}_{t} \geq 0$. 
Furthermore if $t \neq 0$, then $\rho_{t}^{T} - t(2n + 2)\omega^{T}_{t}$ is strictly positive. 
\end{rem}
We first consider the existence of the equation (\ref{MA2}) at $t = 0$. 
For the equation (\ref{MA1}), a result of El-Kacimi-Alaoui \cite{El} guarantees the existence of a solution at $t = 0$. 
Then the existence and uniqueness of a solution of the equation (\ref{MA2}) follows immediately.
\begin{thm}[El Kacimi-Alaoui, \cite{El}]\label{El}
If $t = 0$, then the equation (\ref{MA1}) has a solution which is unique up to an additive constant. 
\end{thm}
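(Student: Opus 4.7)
The equation (\ref{MA1}) at $t = 0$ is the transverse Calabi-Yau equation
\begin{equation*}
(\omega^T + \sqrt{-1}\partial_B\bar\partial_B\psi)^n = e^h(\omega^T)^n, \qquad \psi \in \mathscr{H},
\end{equation*}
and the normalization $\int_S(e^h-1)(\omega^T)^n \wedge \eta = 0$ in the definition of $h$ is exactly the volume-compatibility requirement (both sides must integrate to $V$, since $[\omega^T + \sqrt{-1}\partial_B\bar\partial_B\psi]_B = [\omega^T]_B$). My plan is to prove existence via the continuity method and uniqueness by a standard integration-by-parts argument.

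For uniqueness up to an additive constant, suppose $\psi_1, \psi_2 \in \mathscr{H}$ are two solutions and set $\psi := \psi_1 - \psi_2$. Factoring the difference gives
\begin{equation*}
0 = (\omega^T_{\psi_1})^n - (\omega^T_{\psi_2})^n = \sqrt{-1}\partial_B\bar\partial_B\psi \wedge \Theta,
\end{equation*}
where $\Theta := \sum_{k=0}^{n-1}(\omega^T_{\psi_1})^k \wedge (\omega^T_{\psi_2})^{n-1-k}$ is a pointwise positive, $d_B$-closed basic $(n-1,n-1)$-form. Multiplying by $\psi$, wedging with $\eta$, and applying Stokes' theorem on the closed manifold $S$—the $d\eta = 2\omega^T$ boundary contribution automatically vanishes because basic $(p,q)$-forms with $q>n$ are zero on a $(2n+1)$-dimensional Sasaki manifold—yields
\begin{equation*}
\int_S \sqrt{-1}\partial_B\psi \wedge \bar\partial_B\psi \wedge \Theta \wedge \eta = 0.
\end{equation*}
Since the integrand is pointwise non-negative with $\Theta > 0$, this forces $d_B\psi \equiv 0$, so $\psi$ is constant.

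For existence, I would set up the continuity family
\begin{equation*}
(\omega^T + \sqrt{-1}\partial_B\bar\partial_B\psi_s)^n = e^{sh + c_s}(\omega^T)^n, \qquad s \in [0,1],
\end{equation*}
with $c_s \in \mathbb{R}$ the unique constant making both sides integrate to $V$, and let $T \subset [0,1]$ be the set of $s$ admitting a solution $\psi_s \in \mathscr{H}$ normalized by $\int_S \psi_s (\omega^T)^n \wedge \eta = 0$. Clearly $0 \in T$, with $\psi_0 \equiv 0$. For openness at $s_0 \in T$, the linearization (after taking the logarithm) at $\psi_{s_0}$ is, up to sign, the basic complex Laplacian $\Box_{B,\psi_{s_0}}$ with respect to $\omega^T_{\psi_{s_0}}$, which by Proposition \ref{Laplacian} agrees with the Riemannian Laplacian on basic functions and is therefore Fredholm with kernel the constants; the implicit function theorem then produces solutions for $s$ in a neighborhood of $s_0$.

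The main obstacle is closedness, which demands $s$-uniform a priori $C^{k,\alpha}$ estimates on $\psi_s$. I would run Yau's program for the Calabi conjecture adapted to the basic setting: a $C^0$-estimate by Moser iteration (using Proposition \ref{Laplacian} to pass between $\Delta_B$ and $\Delta$), an Aubin-Yau second-order estimate controlling $\Delta_B\psi_s$ in terms of the $C^0$-norm and a lower bisectional curvature bound of $g^T$, and finally a Calabi $C^3$ or Evans-Krylov estimate to upgrade to $C^{2,\alpha}$ and then bootstrap to $C^\infty$. Each of these calculations is local in a foliation chart and only involves differentiations along $D$, so on each $V_\alpha$ it reduces to the genuine Kähler calculation and glues coherently because $g^T$ is globally well defined; this is the content of El Kacimi-Alaoui's transverse Hodge/analysis package. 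Arzelà-Ascoli then closes $T$, forcing $T = [0,1]$ and producing the desired solution $\psi_1$.
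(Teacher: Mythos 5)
The paper states this result without proof, simply citing El Kacimi-Alaoui's transverse generalization of Yau's theorem, and your argument --- uniqueness by factoring $(\omega^T_{\psi_1})^n-(\omega^T_{\psi_2})^n$ and integrating by parts (with the correct observation that the $d\eta$ contribution dies because a basic form of bidegree $(n,n+1)$ vanishes), existence by the continuity method with Yau's a priori estimates carried out locally in foliation charts --- is precisely the proof scheme of that cited reference. It is correct, with the caveat that the existence half is an accurate program (each estimate genuinely does localize to the $n$-dimensional leaf space and glue, which is the content of El Kacimi-Alaoui's transverse analysis) rather than a worked-out proof.
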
 
\begin{cor}\label{existence0}
The equation (\ref{MA2}) has a unique solution $\varphi_{0}$ at $t = 0$. 
The solution $\varphi_{0}$ satisfies $L_{\eta}(\varphi_{0}) = 0$. 
\end{cor}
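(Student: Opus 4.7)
The plan is to reduce equation (\ref{MA2}) at $t=0$ to equation (\ref{MA1}) at $t=0$ by a single additive shift of the unknown, using two elementary properties of the functional $L_\eta$ and the transverse Monge--Amp\`ere operator. The two properties are: (i) adding a constant to $\varphi$ does not alter $\omega^T_\varphi = \frac{1}{2}d\eta + \sqrt{-1}\partial_B\bar\partial_B\varphi$, hence leaves the left-hand side of either Monge--Amp\`ere equation unchanged; and (ii) $L_\eta(\varphi+c) = L_\eta(\varphi) + c$ for every constant $c \in \mathbb{R}$. Property (ii) comes from writing $L_\eta(\varphi+c)$ along the straight path $t \mapsto t(\varphi+c)$ (along which $\eta_{t(\varphi+c)} = \eta_{t\varphi}$ because $tc$ is constant in the transverse variables) and using the fact that the Sasaki volume $\int_S (\frac{1}{2}d\eta_\varphi)^n\wedge \eta_\varphi = V$ is independent of $\varphi \in \mathscr{H}$. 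This volume invariance is a standard basic-K\"ahler-class computation: differentiating along a path and integrating by parts produces terms of the form $d_B^c\dot\varphi \wedge (\frac{1}{2}d\eta_\varphi)^n$, which are basic $(2n{+}1)$-forms on $S$ and therefore vanish identically.

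\textbf{Existence.} By Theorem \ref{El}, equation (\ref{MA1}) at $t=0$ admits a solution $\psi_0 \in \mathscr{H}$. Set $\varphi_0 := \psi_0 - L_\eta(\psi_0)$. Property (i) gives that $\varphi_0$ still satisfies
\[
\frac{\det(g^T_{i\bar j} + \tfrac{\partial^2\varphi_0}{\partial z^i\partial\bar z^j})}{\det(g^T_{i\bar j})} = e^{h},
\]
while property (ii) gives $L_\eta(\varphi_0) = 0$. Hence the right-hand side equals $\exp(-L_\eta(\varphi_0) + h)$, so $\varphi_0$ solves (\ref{MA2}) at $t=0$, and the second assertion $L_\eta(\varphi_0)=0$ is built in.

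\textbf{Uniqueness.} Suppose $\varphi_0'\in\mathscr{H}$ is any solution of (\ref{MA2}) at $t=0$. Multiply both sides by $(\frac{1}{2}d\eta)^n \wedge \eta$ and integrate over $S$. The left-hand side gives $\int_S (\frac{1}{2}d\eta_{\varphi_0'})^n \wedge \eta = \int_S (\frac{1}{2}d\eta_{\varphi_0'})^n \wedge \eta_{\varphi_0'} = V$, using that $(\frac{1}{2}d\eta_{\varphi_0'})^n \wedge (\eta_{\varphi_0'} - \eta)$ is a basic $(2n{+}1)$-form and the volume invariance recalled above. The right-hand side gives $e^{-L_\eta(\varphi_0')}\int_S e^h (\frac{1}{2}d\eta)^n\wedge \eta = e^{-L_\eta(\varphi_0')}V$ by the normalization $\int_S(e^h-1)(\frac{1}{2}d\eta)^n\wedge\eta=0$. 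Therefore $L_\eta(\varphi_0') = 0$, and consequently $\varphi_0'$ actually solves (\ref{MA1}) at $t=0$. Theorem \ref{El} then forces $\varphi_0' = \psi_0 + c'$ for some constant $c'$, and imposing $L_\eta(\varphi_0')=0$ via property (ii) pins down $c' = -L_\eta(\psi_0)$, so $\varphi_0' = \varphi_0$.

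The only nontrivial input is the volume invariance $\int_S(\frac{1}{2}d\eta_\varphi)^n\wedge\eta_\varphi = V$, which is the expected main obstacle in the write-up, though it is entirely routine: differentiate, apply Stokes to basic forms, and use that the wedge of a basic $1$-form with a basic $2n$-form on $S$ is a basic top-form and hence zero. Everything else is bookkeeping.
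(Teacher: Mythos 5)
Your proposal is correct and follows essentially the same route as the paper: existence by shifting a solution $\psi_{0}$ of (\ref{MA1}) at $t=0$ to $\varphi_{0}=\psi_{0}-L_{\eta}(\psi_{0})$, and uniqueness by integrating (\ref{MA2}) against the volume form to force $L_{\eta}(\varphi_{0})=0$ and then invoking Theorem \ref{El}. The only difference is that you spell out the supporting facts the paper leaves implicit, namely the translation property $L_{\eta}(\varphi+c)=L_{\eta}(\varphi)+c$ (Proposition \ref{fundamental}) and the invariance of $\int_{S}(\frac{1}{2}d\eta_{\varphi})^{n}\wedge\eta_{\varphi}$, both correctly justified.
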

\begin{proof}
Take any solution $\psi_{0} \in \mathscr{H}$ of the equation (\ref{MA1}) at $t = 0$ and define 
$\varphi_{0} := \psi_{0} - L_{\eta}(\psi_{0})$. 
Then it is easy to check that $\varphi_{0}$ is a solution of the equation (\ref{MA2}). 
This proves the existence of a solution of (\ref{MA2}). 
Furthermore, for any solution $\varphi_{0}$ of (\ref{MA2}) we have 
\begin{align*}
\int_{S}(\frac{1}{2}d\eta )^{n} \wedge \eta 
&= \int_{S}(\frac{1}{2}d\eta_{\varphi} )^{n} \wedge \eta_{\varphi} \\
&= \int_{S}\exp(-L_{\eta}(\varphi_{0}) + h)(\frac{1}{2}d\eta )^{m} \wedge \eta \\
&= \exp(-L_{\eta}(\varphi_{0}))\int_{S}e^{h}(\frac{1}{2}d\eta )^{n} \wedge \eta \\
&= \exp(-L_{\eta}(\varphi_{0}))\int_{S}(\frac{1}{2}d\eta )^{n} \wedge \eta. 
\end{align*}
This shows that $L_{\eta}(\varphi_{0}) = 0$. 
Therefore, $\varphi_{0}$ is also a solution of equation (\ref{MA1}) at $t = 0$. 
Now the required uniqueness now follows from Theorem \ref{El} and that $L_{\eta}(\varphi_{0}) = 0$. 
\qed \end{proof}
For each $\varphi \in \mathscr{H}$, we denote by $\Box_{\varphi} := \Box_{B,g_{\varphi}}$ the basic complex Laplacian with respect to the Sasaki metric $g_{\varphi}$. 
The following proposition shows the local extension property of solutions of (\ref{MA2}) for $t \in [0, 1)$ (see also \cite{S}). 
\begin{prop}\label{openness}
Let $0 < \tau < 1$. 
Suppose that the equation (\ref{MA2}) has a solution $\varphi_{\tau}$ at $t = \tau$. Then for some $\varepsilon > 0$, $\varphi_{\tau}$ uniquely extends to a smooth one parameter family $\{ \varphi_{t}\ |\ t \in [0, 1] \cap [\tau - \varepsilon, \tau + \varepsilon] \}$ of solutions of (\ref{MA2}). 
\end{prop}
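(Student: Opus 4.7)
The plan is to prove Proposition \ref{openness} via the implicit function theorem for Banach spaces, applied to the non-linear basic Monge--Amp\`ere operator at $(\tau, \varphi_{\tau})$. Fix $k \geq 2$ and $\alpha \in (0,1)$; write $C^{k,\alpha}_{B}(S)$ for the Banach space of basic $C^{k,\alpha}$ functions, and let $\mathscr{H}^{k+2,\alpha} \subset C^{k+2,\alpha}_{B}(S)$ denote the open subset on which $g^{T}_{i\bar j} + \partial^{2}\varphi / \partial z^{i}\partial\bar z^{j}$ remains positive definite. Consider the smooth map
\begin{equation*}
F(t, \varphi) \;:=\; \log\frac{\det(g^{T}_{i\bar j} + \varphi_{i\bar j})}{\det(g^{T}_{i\bar j})} + t(2n+2)\varphi + L_{\eta}(\varphi) - h, \qquad F : (0,1) \times \mathscr{H}^{k+2,\alpha} \to C^{k,\alpha}_{B}(S).
\end{equation*}
By construction, the zeros of $F$ are precisely the solutions of (\ref{MA2}), so $F(\tau, \varphi_{\tau}) = 0$.

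Next I would compute the linearization in $\varphi$. Using $\delta \log\det(g^{T}_{\varphi}) = (g^{T}_{\varphi})^{i\bar j}\psi_{i\bar j}$, the local formula $\Box_{B, g_{\varphi}}\psi = -(g^{T}_{\varphi})^{i\bar j}\psi_{i\bar j}$ from Proposition \ref{Laplacian}, and the first-variation formula $\frac{d}{ds}|_{s=0}L_{\eta}(\varphi_{\tau} + s\psi) = V^{-1}\int_{S}\psi\,(\tfrac{1}{2}d\eta_{\varphi_{\tau}})^{n} \wedge \eta_{\varphi_{\tau}}$, one obtains
\begin{equation*}
D_{\varphi}F\bigl|_{(\tau, \varphi_{\tau})}(\psi) \;=\; -\Box_{\tau}\psi + \tau(2n+2)\psi + \frac{1}{V}\int_{S}\psi\,(\tfrac{1}{2}d\eta_{\varphi_{\tau}})^{n} \wedge \eta_{\varphi_{\tau}},
\end{equation*}
where $\Box_{\tau} := \Box_{B, g_{\varphi_{\tau}}}$. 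This operator is an elliptic self-adjoint second-order operator perturbed by a rank-one projection, hence Fredholm of index zero from $C^{k+2,\alpha}_{B}(S)$ to $C^{k,\alpha}_{B}(S)$; it will be an isomorphism as soon as it is injective.

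To prove injectivity, suppose $\psi$ lies in the kernel and set $d\mu_{\tau} := (\tfrac{1}{2}d\eta_{\varphi_{\tau}})^{n}\wedge \eta_{\varphi_{\tau}}$ (total mass $V$). Integrating the kernel equation against $d\mu_{\tau}$, using $\int_{S}\Box_{\tau}\psi\,d\mu_{\tau} = 0$, gives $(\tau(2n+2)+1)\bar\psi = 0$ where $\bar\psi := V^{-1}\int_{S}\psi\,d\mu_{\tau}$, so $\bar\psi = 0$ and $\psi$ satisfies $\Box_{\tau}\psi = \tau(2n+2)\psi$ with zero mean. By Remark \ref{remark_positive}, $\rho^{T}_{\tau} - \tau(2n+2)\omega^{T}_{\tau} = (1-\tau)(2n+2)\omega^{T}$ is a strictly positive transverse $(1,1)$-form on the compact manifold $S$, so there exists $\delta > 0$ with $Ric^{T}_{\tau} \geq (\tau(2n+2)+\delta)\,g^{T}_{\tau}$. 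The transverse Lichnerowicz inequality, obtained by a Bochner computation in the basic K\"ahler setting (cf.\ \cite{El}), then gives $\lambda_{1}(\Box_{\tau}) \geq \tau(2n+2)+\delta > \tau(2n+2)$, which forces $\psi \equiv 0$.

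The implicit function theorem therefore yields, for some $\varepsilon > 0$, a unique $C^{k+2,\alpha}$ family $\{\varphi_{t}\}_{|t-\tau|<\varepsilon}$ of solutions of (\ref{MA2}), depending smoothly on $t$; standard elliptic bootstrapping applied to the linearization of (\ref{MA2}) along $t$- and $\varphi$-derivatives upgrades each $\varphi_{t}$ to $C^{\infty}$. The main delicate point I anticipate is the verification of the transverse Lichnerowicz estimate with a \emph{strict} gap: while the Bochner identity transfers from the classical K\"ahler case once all integrations are checked to respect the basic structure, one must confirm that the strict positivity of $\rho^{T}_{\tau} - \tau(2n+2)\omega^{T}_{\tau}$ rules out the Matsushima-type equality case in the first eigenspace of $\Box_{\tau}$ on basic functions.
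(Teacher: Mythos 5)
Your proposal is correct and follows essentially the same route as the paper: the same map $\Gamma(t,\varphi)$ on H\"older spaces of basic functions, the same linearization $-\Box_{\tau}+\tau(2n+2)$ plus the rank-one averaging term, and invertibility via Remark \ref{remark_positive} together with the transverse Lichnerowicz eigenvalue gap $\lambda_{1}(\Box_{\tau})>\tau(2n+2)$, followed by the implicit function theorem and elliptic regularity. The only cosmetic differences are that you spell out the injectivity/Fredholm bookkeeping explicitly and omit the paper's separate (and, for $0<\tau<1$, unneeded) case $\tau=0$.
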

\begin{proof}
Let $2 \leq k \in \mathbb{Z}$ and fix $\alpha \in \mathbb{R}$ with $0 < \alpha < 1$. 
Let $C_{B}^{k, \alpha}(S)$ be the set of all basic functions which belong to $C^{k, \alpha}(S)$, and $\mathscr{H}^{k, \alpha}$ be the open set of all functions $\varphi \in C_{B}^{k, \alpha}(S)$ satisfying that $(g^{T}_{i \bar j} + \frac{\partial^{2} \varphi}{\partial z^{i} \partial \bar z^{j}})$ is positive definite. 
Define $\Gamma : \mathscr{H}^{k, \alpha} \times \mathbb{R} \to C_{B}^{k-2, \alpha}(S)$ by 
\begin{equation*}
\Gamma(\varphi, t) := \log \left( \frac{\det(g^{T}_{i \bar{j}} + \frac{\partial^{2}\varphi}{\partial z^{i}\partial \bar z^{j}})}{\det(g^{T}_{i \bar{j}})} \right) + t(2n+2)\varphi + L_{\eta}(\varphi) - h. 
\end{equation*}
Then its Fr${\rm \acute e}$chet derivative $D_{\varphi}\Gamma$ with respect to the first factor at $(\varphi, t)$ is given by 
\begin{equation*}
D_{\varphi}\Gamma(\psi) = (-\Box_{\varphi} + t(2n + 2))\psi + \frac{1}{V}\int_{S}\psi (\frac{1}{2}d\eta_{\varphi})^{n} \wedge \eta_{\varphi}
\end{equation*}
for each $\psi \in C_{B}^{k, \alpha}(S)$. 
Note that, by the well-known regularity theorem, we have $\varphi \in C_{B}^{\infty}(S)$ for every $(\varphi, t) \in \mathscr{H}^{k, \alpha} \times \mathbb{R}$ whenever $\Gamma(\varphi, t) = 0$. 
Since $\Gamma(\varphi_{\tau}, \tau) = 0$, an application of the implicit function theorem now reduces the proof to showing that $D_{\varphi}\Gamma$ is invertible at $(\varphi_{\tau}, \tau)$. 
There are the following cases. 
\begin{description}
\item[Case 1] $\tau = 0$. 
Then $D_{\varphi} \Gamma$ at $(\varphi_{0}, 0)$ is given by 
\begin{equation*}
D_{\varphi}\Gamma(\psi) = -\Box_{\varphi_{0}}\psi + \frac{1}{V}\int_{S}\psi (\frac{1}{2}d\eta_{\varphi_{0}})^{n} \wedge \eta_{\varphi_{0}}, 
\end{equation*}
which is invertible. 
\item[Case 2] $\tau \neq 0$. 
First note $\rho_{\varphi_{\tau}}^{T} > \tau(2n + 2)\omega_{\varphi_{\tau}}^{T}$ by Remark \ref{remark_positive}. Then the similar argument of Lichnerowicz \cite{Li} tells us that the first positive eigenvalue of $\Box_{\varphi_{\tau}}$ is greater than $\tau(2n + 2)$ (see also the proof of Theorem 2.4.3 in \cite{F}). 
This shows that $D_{\varphi}\Gamma |_{(\varphi_{\tau}, \tau)}$ is invertible. 
\end{description}
\qed \end{proof}
\begin{rem}\label{openness2}
A Hamiltonian holomorphic vector field $X$ is said to be {\it normalized} if the Hamiltonian function $u_{X}$ satisfies that 
\begin{equation*}
\int_{S}u_{X}e^{h}(\frac{1}{2}d\eta)^{n} \wedge \eta = 0. 
\end{equation*}
For any $X \in \mathfrak{h}$, there exists a constant $c$ such that $X + c\xi$ is normalized Hamiltonian holomorphic vector field. 
We denote by $\mathfrak{h}_{0}$ the set of all normalized Hamiltonian holomorphic vector fields. 
If $\mathfrak{h}_{0} = \{ 0 \}$ and $\tau =1$, the result of Futaki, Ono and Wang (cf. Theorem 5.1 in \cite{FOW}) tells us that $\ker(\Box_{\varphi_{1}} - (2n + 2)) \cong \mathfrak{h}_{0} = \{ 0 \}$ and the first positive eigenvalue of $\Box_{\varphi_{1}}$ is greater than $2n +2$. 
This shows that $D_{\varphi}\Gamma |_{(\varphi_{1}, 1)}$ is invertible. 
Hence we obtain that Proposition \ref{openness} still holds for the case that $\mathfrak{h}_{0} = \{ 0 \}$ and $\tau =1$. 
\end{rem}

Next we shall give a bound for solutions of (\ref{MA2}). 
By El Kacimi-Alaoui's generalization of Yau's estimate \cite{Yau} for transverse Monge-Ampe${\rm \grave r}$e equations, the $C^{0}$-estimate for solutions $\varphi$ of (\ref{MA2}) implies the $C^{2, \alpha}$-estimate for them. 
First of all, we give a bound for the oscillation ${\rm osc}_{S}\varphi = \sup_{S}\varphi - \inf_{S}\varphi$ for $\varphi \in \mathscr{H}$. 
The following proposition is proved by the same way as K${\rm \ddot a}$hler geometry. 
\begin{prop}\label{osc1}
Let $\varphi \in \mathscr{H}$. 
We assume that there exists real constants $A, \delta > 0$ such that 
\begin{equation*}
\| \psi \|_{L^{2m/(m-1)}} \leq A\| d\psi \|_{L^{2}},\ \delta \| \psi \|_{L^{2}}^{2} \leq \| d\psi \|_{L^{2}}^{2} 
\end{equation*}
for every basic function $\psi \in C_{B}^{\infty}(S)$ which satisfies $\int_{S}\psi dV_{g} = 0$. 
Moreover, suppose that 
\begin{equation*}
\sup_{S}\frac{\det (g_{i \bar{j}}^{T} + \frac{\partial^{2}\varphi}{\partial z^{i}\partial \bar z^{j}})}{\det (g_{i \bar{j}}^{T})} \leq B
\end{equation*}
for some constant $B >0$. 
Then there exists a real constant $C > 0$ depending only $A, \delta$ and $B$ which satisfies 
\begin{equation*}
{\rm osc}_{S}\varphi \leq C. 
\end{equation*}
\end{prop}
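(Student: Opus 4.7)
The plan is to carry out Yau's classical $C^0$-estimate from the K\"ahler case in the transverse K\"ahler setting of a compact Sasaki manifold, via Moser iteration. The hypotheses on the Sobolev and Poincar\'e constants $A,\delta$ are tailored precisely so that this iteration goes through; the supremum bound on the quotient of determinants plays the role of the $L^\infty$-bound on the Monge--Amp\`ere right-hand side.

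First I would normalize: after adding a suitable constant we may assume $\int_S \varphi\, dV_g = 0$, and then it suffices to bound $\|\varphi\|_{L^\infty}$ (Poincar\'e with constant $\delta$ eventually converts this to the oscillation bound, but really the oscillation bound is what the normalized $L^\infty$-bound gives). Next, using $\omega^T_\varphi = \omega^T + \sqrt{-1}\,\partial_B\bar\partial_B\varphi$, expand
\begin{equation*}
(\omega^T_\varphi)^n - (\omega^T)^n \;=\; \sqrt{-1}\,\partial_B\bar\partial_B\varphi \,\wedge\, T_{n-1},\qquad T_{n-1} := \sum_{k=0}^{n-1}(\omega^T_\varphi)^k\wedge(\omega^T)^{n-1-k},
\end{equation*}
which is a basic $(n-1,n-1)$-form that is positive whenever $\varphi\in\mathscr H$. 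Wedging with $\eta$ and integrating identities of this shape against $|\varphi|^{p-2}\varphi$ turns the transverse Monge--Amp\`ere equation into an inequality involving $\|d_B(|\varphi|^{p/2})\|_{L^2}^2$ on one side and $\|\varphi\|_{L^{p-1}}^{p-1}$ on the other; the key is that on basic forms $\iota_\xi=L_\xi=0$, so integration by parts on $S$ against the volume $(\tfrac12 d\eta)^n\wedge\eta$ reduces to the standard K\"ahler computation fiberwise on the local K\"ahler quotient.

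The main step is then the following, done for each $p\ge 2$: multiply $(F-1)(\omega^T)^n\wedge\eta = \sqrt{-1}\,\partial_B\bar\partial_B\varphi\wedge T_{n-1}\wedge\eta$ by $|\varphi|^{p-2}\varphi$, integrate, and integrate by parts once to obtain
\begin{equation*}
c_{n,p}\int_S |d_B\varphi|^2\,|\varphi|^{p-2}\,(\omega^T)^n\wedge\eta \;\le\; (B+1)\int_S |\varphi|^{p-1}\,(\omega^T)^n\wedge\eta,
\end{equation*}
where $c_{n,p}\sim (p-1)/p^2$ and positivity of $T_{n-1}$ is used to drop the cross terms. Rewriting the left side as $\tfrac{4(p-1)}{p^2}\|d_B(|\varphi|^{p/2}\mathrm{sgn}\,\varphi)\|_{L^2}^2$ and applying the assumed Sobolev inequality $\|\psi\|_{L^{2m/(m-1)}}\le A\|d\psi\|_{L^2}$ to $\psi=|\varphi|^{p/2}\mathrm{sgn}\,\varphi$ (after correcting its mean by the Poincar\'e constant $\delta$) gives a Moser-type recursion
\begin{equation*}
\|\varphi\|_{L^{pm/(m-1)}}^{p} \;\le\; C(A,\delta,B)\,p\,\|\varphi\|_{L^{p-1}}^{p-1}.
\end{equation*}
Iterating along $p_k = 2\bigl(m/(m-1)\bigr)^k$ in the standard fashion produces a uniform bound $\|\varphi\|_{L^\infty}\le C$ depending only on $A,\delta,B$, and shifting back by $\sup_S\varphi$ or $\inf_S\varphi$ yields $\mathrm{osc}_S\varphi\le C$.

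I expect the main (and essentially the only) obstacle to be bookkeeping rather than mathematical depth: one must check that the integration-by-parts against the Sasaki volume $(\tfrac12 d\eta)^n\wedge\eta$ produces no boundary or Reeb-direction contributions (this is ensured by the basicness of all integrands and $L_\xi$-invariance of the volume form), and that the mixed form $T_{n-1}$ behaves as a positive $(n-1,n-1)$-coefficient in the Cauchy--Schwarz/AM--GM step so that one truly recovers $|d_B\varphi|^2$ on the left-hand side. Once these transverse-K\"ahler analogues of the standard K\"ahler computations are in hand, the Moser iteration is formally identical to the one in the proof of Yau's $C^0$-estimate.
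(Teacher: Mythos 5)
Your proposal is correct and is exactly the argument the paper has in mind: the paper offers no proof of this proposition beyond the remark that it ``is proved by the same way as K\"ahler geometry,'' i.e.\ Yau's Moser-iteration $C^{0}$-estimate transplanted to the transverse K\"ahler setting, which is what you carry out. The two transverse-specific points you flag --- that integration by parts against $(\tfrac{1}{2}d\eta)^{n}\wedge\eta$ produces no extra terms because a basic form of top degree vanishes, and that the Sobolev step must be seeded and mean-corrected using the Poincar\'e constant $\delta$ --- are precisely the only places the argument differs from the classical K\"ahler case, so nothing is missing.
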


Proposition \ref{osc1} has the following important implication. 
In our proof, Theorem A is essential to obtain a bound for the infimum of basic functions $\varphi \in \mathscr{H}$. 

\begin{prop}\label{osc}
Let $G = G_{\eta}$ be the Green function of the initial metric $g$ and $K$ be the real constant which satisfies $\inf G \geq -K$. 
For $\varphi \in \mathscr{H}$, assume that $\rho_{\varphi}^{T} \geq t(2n + 2)\omega_{\varphi}^{T}$ for some $t \in (0, 1]$. 
Then there exists a positive constant $\gamma >0$ such that 
\begin{eqnarray*}
{\rm osc}_{S}\varphi 
\leq I_{\eta}(\varphi) + 2n\left( KV_{0} + \gamma \frac{(2\pi)^{2}(2n-1)}{t(2n + 2)}\right), 
\end{eqnarray*}
where $V_{0} := V/n!$ is the volume of $(S, g)$. 
\end{prop}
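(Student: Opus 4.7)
The plan is to adapt the Bando--Mabuchi argument by splitting the oscillation as
$$\mathrm{osc}_S \varphi \;=\; \bigl(\sup_S\varphi - \bar\varphi\bigr) + \bigl(\bar\varphi - \bar\varphi_\varphi\bigr) + \bigl(\bar\varphi_\varphi - \inf_S\varphi\bigr),$$
where $\bar\varphi := V_0^{-1}\!\int_S\!\varphi\,dV_g$ and $\bar\varphi_\varphi := V_0^{-1}\!\int_S\!\varphi\,dV_{g_\varphi}$. Unwinding the definition of $I_\eta$ (recalling that $(\frac{1}{2}d\eta)^n\wedge\eta = n!\,dV_g$ and similarly for $g_\varphi$, and that the total volume is preserved under basic potentials) identifies the middle term with $I_\eta(\varphi)$ exactly. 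It therefore suffices to bound each of the two outer terms by a multiple of $n$ times $KV_0$ and $\gamma(2\pi)^2(2n-1)/(t(2n+2))$ respectively.

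For the first term I would apply Green's formula on $(S,g)$ at a maximum point $x_0$ of $\varphi$ and rewrite it as
$$\sup_S\varphi - \bar\varphi \;=\; -\!\int_S \bigl(G(x_0,y)+K\bigr)\Delta\varphi(y)\,dV_g(y),$$
using that $\int_S\Delta\varphi\,dV_g=0$ and $G+K\ge 0$. The positive definiteness of $(g_{i\bar j}^T+\partial_i\partial_{\bar j}\varphi)$ gives a one-sided pointwise bound on $\Box_B\varphi$, and hence on $\Delta\varphi = \Delta_B\varphi$ via Proposition \ref{Laplacian}, of order $n$. Combining this with $\int_S (G+K)\,dV_g = KV_0$ produces the required bound, contributing the $2n\cdot KV_0$ piece.

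For the third term I would run the same argument with respect to $(S,g_\varphi)$, using the Green function $G_\varphi$, and the analogous Laplacian bound $\Delta_\varphi\varphi$ obtained from the positivity $(g_\varphi^T)^{i\bar j}g_{i\bar j}^T > 0$. The critical input is a lower bound on $G_\varphi$, and this is precisely where Theorem A (equivalently, Theorem \ref{Myers}) enters: the assumption $\rho_\varphi^T \geq t(2n+2)\omega_\varphi^T$ is exactly $\mathrm{Ric}^T_\varphi \geq t(2n+2)\,g^T_\varphi$, so
$$\mathrm{diam}(S,g_\varphi)\;\le\; 2\pi\sqrt{\frac{2n-1}{t(2n+2)}}.$$
Together with the full Ricci lower bound on $g_\varphi$ coming from the Sasakian identity $\mathrm{Ric}(X,Y) = \mathrm{Ric}^T(X,Y) - 2g(X,Y)$ on $D$ (and $\mathrm{Ric}(\xi,\cdot) = 2n\,\eta(\cdot)$), a classical Riemannian estimate of Cheng--Li--Yau type gives
$$-\inf_{S\times S} G_\varphi \;\le\; \frac{\gamma}{V_0}\,\mathrm{diam}(S,g_\varphi)^2 \;\le\; \frac{\gamma\,(2\pi)^2(2n-1)}{V_0\,t(2n+2)}$$
for a constant $\gamma>0$ depending only on the dimension. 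Mirroring the first bound yields the stated contribution, and summing the three pieces completes the argument.

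The main obstacle is ensuring the Green-function estimate for $g_\varphi$ is uniform in $\varphi$ and in $t\in(0,1]$: its dependence must funnel only through $\mathrm{diam}(S,g_\varphi)$, the fixed volume $V_0$, and an $\varphi$-independent lower bound on $\mathrm{Ric}_{g_\varphi}$. The Sasakian curvature identities above make the Ricci lower bound explicit, but one must take care that the bookkeeping truly absorbs into a single dimensional constant $\gamma$, independent of $\varphi$, so that the right-hand side of the proposition is meaningful for families of metrics $g_\varphi$ along the continuity method.
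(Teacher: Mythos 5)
Your proposal follows essentially the same route as the paper's proof: the paper bounds $\sup_S\varphi$ by the $g$-average plus $2nKV_0$ via the Green function of $g$ and the bound $\Box_0\varphi\le n$, bounds $\inf_S\varphi$ from below by the $g_\varphi$-average minus $2n\gamma\,\mathrm{diam}(S,g_\varphi)^2$ via the Green function of $g_\varphi$, the bound $\Box_\varphi\varphi\ge -n$ and the lower Ricci bound $\mathrm{Ric}=\mathrm{Ric}^T-2g\ge -2$ together with Theorem~3.2 of \cite{BM}, and then invokes Theorem A to control $\mathrm{diam}(S,g_\varphi)$, the difference of the two averages being exactly $I_\eta(\varphi)$. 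The uniformity concern you raise about $\gamma$ is legitimate (the paper handles it by citing the Gallot-type estimate of \cite{BM} and revisits the point via $D$-homothetic deformations in its concluding section), but it does not change the structure of the argument.
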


\begin{proof}
First we observe that, by the identity $\frac{1}{2}d\eta_{\varphi} = \frac{1}{2}d\eta + \sqrt{-1} \partial_{B} \bar \partial_{B}\varphi$, we have 
\begin{equation*}
\Box_{0}\varphi \leq n\ \text{and}\ \Box_{\varphi}\varphi \geq -n. 
\end{equation*}
Since the basic Laplacian coincides with the restriction of the Riemannian Laplacian to $C_{B}^{\infty}(S)$ (cf. Proposition \ref{Laplacian}), 
we have 
\begin{align*}
\varphi(p) 
&= \frac{1}{V_{0}}\int_{S}\varphi dV_{g} 
+ \int_{S}(G(p, q) + K)(\Delta_{0}\varphi)(q) dV_{g}(q) \\
&= \frac{1}{V}\int_{S}\varphi (\frac{1}{2}d\eta)^{n} \wedge \eta 
+ \int_{S}(G(p, q) + K)(2\Box_{0}\varphi)(q) dV_{g}(q) \\
&\leq \frac{1}{V}\int_{S}\varphi (\frac{1}{2}d\eta)^{n} \wedge \eta 
+ 2nKV_{0}. 
\end{align*}
This leads the following estimate for $\varphi$; 
\begin{equation}\label{sup}
\sup_{S}\varphi \leq \frac{1}{V}\int_{S}\varphi (\frac{1}{2}d\eta)^{n} \wedge \eta + 2nKV_{0}. 
\end{equation}

On the other hand, by using the Green function $G_{\varphi}$ of $g_{\varphi}$ we have 
\begin{align*}
\varphi(p) 
&= \frac{1}{V_{0}}\int_{S}\varphi dV_{g_{\varphi}} 
+ \int_{S}(G_{\varphi}(p, q) + K_{\varphi})(\Delta_{\varphi} \varphi)(q) dV_{g_{\varphi}}(q) \\
&= \frac{1}{V}\int_{S}\varphi (\frac{1}{2}d\eta_{\varphi})^{n} \wedge \eta_{\varphi}
+ \int_{S}(G_{\varphi}(p, q) + K_{\varphi})(2\Box_{\varphi} \varphi)(q) dV_{g_{\varphi}}(q), 
\end{align*}
where $K_{\varphi} = \sup(-G_{\varphi})$. 
Since ${\rm Ric}^{T} \geq t(2n + 2)$ by assumption, we have ${\rm Ric} \geq t(2n + 2) - 2 \geq -2$. 
Then Theorem 3.2 in \cite{BM} tells us that there exists a positive constant $\gamma > 0$ which depends only $n$ and satisfies 
\begin{align*}
\varphi(p) 
&\geq \frac{1}{V}\int_{S}\varphi (\frac{1}{2}d\eta_{\varphi})^{n} \wedge \eta_{\varphi} 
- 2n\gamma {\rm diam}(S, g_{\varphi})^{2}. 
\end{align*}
Moreover, by Theorem \ref{Myers2} we have 
\begin{equation*}
{\rm diam}(S, g_{\varphi}) \leq 2\pi\sqrt{\frac{2n-1}{t}}. 
\end{equation*}
Hence we obtain 
\begin{equation}\label{inf}
\inf_{S}\varphi \geq \frac{1}{V}\int_{S}\varphi (\frac{1}{2}d\eta_{\varphi})^{n} \wedge \eta_{\varphi} 
- 2n\gamma \frac{(2\pi)^{2}(2n-1)}{t}
\end{equation}
and hence 
\begin{equation}
{\rm osc}_{S}\varphi 
= \sup_{S}\varphi - \inf_{S}\varphi 
\leq I_{\eta}(\varphi) + 2n\left( KV_{0} + (2n-1) \gamma \frac{(2\pi)^{2}}{t}\right)
\end{equation}
by inequalities (\ref{sup}) and (\ref{inf}). 
\end{proof}

We then see that a bound for $I_{\eta}$ on solutions of (\ref{MA2}) implies a priori $C^{0}$-estimate for solutions. 
\begin{prop}\label{a priori}
Let $\varphi_{t}$ be a solution of (\ref{MA2}) at $t$ and $A > 0$ be a constant which satisfies 
\begin{equation*}
I_{\eta}(\varphi_{t}) \leq A. 
\end{equation*}
Then there exists a real constant $C > 0$ depending only $A$, $n$ and the initial metric $g$ which satisfies 
\begin{equation*}
\sup_{S} \varphi_{t} \leq C. 
\end{equation*}
\end{prop}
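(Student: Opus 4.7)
The strategy is to combine the oscillation estimate from Proposition \ref{osc} with an upper bound on $\inf_S \varphi_t$ extracted from the Monge--Amp\`ere equation (\ref{MA2}) itself by the maximum principle, and then to control the auxiliary quantity $L_\eta(\varphi_t)$ via an integral identity and Jensen's inequality.

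First, by Remark \ref{remark_positive}, any solution $\varphi_t$ of (\ref{MA2}) satisfies $\rho^T_{\varphi_t} \geq t(2n+2)\omega^T_{\varphi_t}$, so Proposition \ref{osc} (whose proof relies on the diameter bound of Theorem A) gives
\begin{equation*}
\mathrm{osc}_S \varphi_t \;\leq\; I_\eta(\varphi_t) + 2n\!\left(K V_0 + \gamma\,\frac{(2\pi)^2(2n-1)}{t(2n+2)}\right) \;\leq\; A + C_1(t, n, g).
\end{equation*}
We may restrict to $t \in [t_0, 1]$ for some small $t_0 > 0$: for $t$ close to $0$, the openness from Proposition \ref{openness} about the explicit solution $\varphi_0$ of Corollary \ref{existence0} already yields a uniform bound on $\varphi_t$, so the $1/t$ factor above is harmless.

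Second, apply the maximum principle to (\ref{MA2}) at a minimum point $p$ of $\varphi_t$. There the transverse complex Hessian $\partial^2\varphi_t/\partial z^i\partial\bar z^j$ is positive semi-definite, so the left-hand side of (\ref{MA2}) is at least $1$ at $p$, yielding
\begin{equation*}
0 \;\leq\; -t(2n+2)\inf_S \varphi_t - L_\eta(\varphi_t) + h(p),\qquad\text{hence}\qquad \inf_S \varphi_t \;\leq\; \frac{-L_\eta(\varphi_t) + \sup_S h}{t(2n+2)}.
\end{equation*}
Thus it remains to bound $-L_\eta(\varphi_t)$ from above by a constant depending only on $A$, $n$ and $g$.

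Third, integrating (\ref{MA2}) against $(\tfrac12 d\eta)^n \wedge \eta$ and using $\int_S d\mu_{\varphi_t} = V$ gives the identity
\begin{equation*}
L_\eta(\varphi_t) \;=\; \log\!\left(\frac{1}{V}\int_S e^{-t(2n+2)\varphi_t + h}\,(\tfrac12 d\eta)^n \wedge \eta\right),
\end{equation*}
and Jensen's inequality applied to the convex function $x \mapsto e^x$ yields $L_\eta(\varphi_t) \geq -t(2n+2)\bar\varphi_t + \bar h$, where $\bar\varphi_t := \frac{1}{V}\int_S \varphi_t\,(\tfrac12 d\eta)^n\wedge\eta$. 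Combining this with the definition $I_\eta(\varphi_t) = \bar\varphi_t - \frac{1}{V}\int_S \varphi_t\,d\mu_{\varphi_t}$, the trivial lower bound $\frac{1}{V}\int_S \varphi_t\,d\mu_{\varphi_t} \geq \inf_S \varphi_t$, the hypothesis $I_\eta(\varphi_t) \leq A$, and the oscillation bound from Step 1 permits one to close the estimate for $t \in [t_0, 1]$, yielding $\sup_S \varphi_t = \inf_S \varphi_t + \mathrm{osc}_S \varphi_t \leq C(A, n, g)$.

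The principal obstacle is this last step, where $L_\eta(\varphi_t)$, $\bar\varphi_t$, $\inf_S \varphi_t$ and $\sup_S \varphi_t$ are all coupled through (\ref{MA2}); some bookkeeping is needed to cancel the possibly $t$-dependent terms (in particular the $1/t$ from Proposition \ref{osc}) by playing off the max and min principle bounds against the Jensen estimate in both directions. Once this is arranged on $[t_0, 1]$, the openness at $t = 0$ handles the remaining range $[0, t_0]$, giving the desired uniform bound.
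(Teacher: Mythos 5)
Your proposal does not close, and the two places where it fails are exactly where the paper's proof does its real work. First, the small-$t$ regime: you propose to restrict to $t\in[t_0,1]$ and dispose of $[0,t_0]$ ``by openness'' around $\varphi_0$. But Proposition \ref{openness} is an existence statement (a continuation for \emph{some} unspecified $\varepsilon$), not an a priori estimate; the proposition to be proved concerns an \emph{arbitrary} solution at an \emph{arbitrary} $t$, with a constant depending only on $A$, $n$ and $g$, and no bound on $[0,t_0]$ follows from the implicit function theorem. The paper instead removes the $1/t$ in two stages: multiplying the conclusion of Proposition \ref{osc} by $t$ gives the uniform bound $t\,\mathrm{osc}_S\varphi_t\leq C_1$; combined with the existence of a point $p_t$ where $-t(2n+2)\varphi_t(p_t)-L_\eta(\varphi_t)+h(p_t)=0$ (obtained by integrating (\ref{MA2})), this bounds $\sup_S\bigl|{-t(2n+2)\varphi_t-L_\eta(\varphi_t)+h}\bigr|$ by a constant, i.e.\ bounds the Monge--Amp\`ere density above and below uniformly in $t$; \emph{then} Proposition \ref{osc1} — not Proposition \ref{osc} — yields a $t$-independent oscillation bound $\mathrm{osc}_S\varphi_t\leq C_3$. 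Your argument never produces a $t$-free oscillation bound.

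Second, your closing step is circular as written. The maximum principle gives $t(2n+2)\inf_S\varphi_t\leq -L_\eta(\varphi_t)+\sup_S h$, and Jensen gives $-L_\eta(\varphi_t)\leq t(2n+2)\bar\varphi_t-\bar h$; substituting, you need an upper bound on $\bar\varphi_t=\frac1V\int_S\varphi_t(\frac12 d\eta)^n\wedge\eta$, and every route to that (via $I_\eta\leq A$ or via $\sup_S\varphi_t$) reintroduces the quantity you are trying to estimate — the inequalities all point the same way and only control differences of averages, never an absolute value of $\varphi_t$. The paper supplies the missing normalization through the elementary observation that $L_\eta(\varphi_t)$ is itself an average of $\varphi_t$ (over the measures $(\frac12 d\eta_{s\varphi_t})^n\wedge\eta$), so $|L_\eta(\varphi_t)-\varphi_t(p_t)|\leq \mathrm{osc}_S\varphi_t$; combined with the identity at $p_t$ this gives $\{1+t(2n+2)\}|\varphi_t(p_t)|\leq \mathrm{osc}_S\varphi_t+\sup_S|h|$, an absolute bound on an actual value of $\varphi_t$, whence $\sup_S\varphi_t\leq 2C_3+\sup_S|h|$. (Your max-principle step could be salvaged by invoking this same averaging property $L_\eta(\varphi_t)\geq\inf_S\varphi_t$ instead of Jensen, but as proposed the estimate does not close, and you would still lack the $t$-independent oscillation bound.)
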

\begin{proof}
By Proposition \ref{osc}, there exists $C_{1} > 0$ which depends only $A$, $n$ and the initial metric $g$ such that 
\begin{equation*}
t{\rm osc}_{S}\varphi_{t} 
\leq t \left( I_{\eta}(\varphi_{t}) + 2n\left( K_{0}V_{0} + \gamma \frac{(2\pi)^{2}(2n - 1)}{t(2n + 2)}\right)\right)
\leq C_{1}, 
\end{equation*}
where $K_{0}$ is a constant which satisfies $\inf G_{\eta} \geq -K_{0}$. 
By integrating both sides of (\ref{MA2}) we have 
\begin{equation*}
\int_{S}\exp(-t(2n + 2)\varphi_{t} - L_{\eta}(\varphi_{t}) + h) (\frac{1}{2}d\eta)^{n} \wedge \eta 
= \int_{S}(\frac{1}{2}d\eta)^{n} \wedge \eta 
\end{equation*}
and hence 
\begin{equation}\label{integration}
-t(2n + 2)\varphi_{t}(p_{t}) - L_{\eta}(\varphi_{t}) + h(p_{t}) = 0
\end{equation}
for some $p_{t} \in S$. 
Then there exists a constant $C_{2} > 0$ which depends only $A$, $n$ and the initial metric $g$ such that 
\begin{align*}
|-t(2n + 2)\varphi_{t}(p) - L_{\eta}(\varphi_{t}) + h(p)|
&\leq |t(2n + 2)\varphi_{t}(p_{t}) - t(2n + 2)\varphi_{t}(p)| \\
&\quad + |h(p) - h(p_{t}) | \\
&\leq t(2n + 2){\rm osc}_{S}\varphi_{t} + 2\sup_{S}|h| \leq C_{2}
\end{align*}
for each $p \in S$. 
This shows that 
\begin{equation*}
\sup_{S}\left| \log \frac{\det(g^{T}_{i \bar{j}} + \frac{\partial^{2}\varphi_{t}}{\partial z^{i}\partial \bar z^{j}})}{\det(g^{T}_{i \bar{j}})} 
\right|
= \left| \exp(-t(2n+2)\varphi_{t} -L_{\eta}(\varphi_{t}) + h) \right| \leq C_{2}. 
\end{equation*}
Hence by Proposition \ref{osc1} there exists a constant $C_{3} > 0$ such that 
\begin{equation}\label{osc_inequality}
{\rm osc}_{S}\varphi \leq C_{3}. 
\end{equation}
For $p_{t}$ defined above we have 
\begin{align*}
|L_{\eta}(\varphi_{t}) - \varphi_{t}(p_{t})| 
&= \left| \frac{1}{V}\int_{0}^{1} ds \int_{S}(\varphi_{t} - \varphi_{t}(p_{t}))(\frac{1}{2}d\eta_{s\varphi_{t}})^{n} \wedge \eta \right| \\
&\leq \frac{1}{V} \int_{0}^{1} ds \int_{S} {\rm osc}_{S}\varphi_{t} (\frac{1}{2}d\eta_{s\varphi_{t}})^{n} \wedge \eta  \\
&= {\rm osc}_{S}\varphi_{t} \leq C_{3}. 
\end{align*}
Then by combining (\ref{integration}), we obtain 
\begin{align*}
\{1 + t(2n + 2)\} | \varphi_{t}(p_{t}) |
&\leq | \varphi_{t}(p_{t}) - L_{\eta}(\varphi_{t}) + h(p_{t}) | \\
&\leq | \varphi_{t}(p_{t}) - L_{\eta}(\varphi_{t})| + |h(p_{t}) | \\
&\leq C_{3} + \sup_{S}|h|
\end{align*}
and hence 
\begin{equation*}
\sup_{S}\varphi_{t}
= {\rm osc}_{S}\varphi_{t} + \inf_{S}\varphi_{t} 
\leq {\rm osc}_{S}\varphi + \varphi_{t}(p_{t}) 
\leq 2C_{3} + \sup_{S}|h|
\end{equation*}
If we put $C := 2C_{3} + \sup_{S}|h|$, then it depends only $A$, $n$ and the initial metric $g$, and satisfies $\sup_{S}\varphi_{t} \leq C$. 
This completes the theorem. 
\end{proof}

To obtain a bound for $I_{\eta}$, we need to see the behavior of $M_{\eta}$ along the solutions of (\ref{MA2}). 
The following lemma asserts that $M_{\eta}$ is non-increasing along the solutions, whose proof can be given as in \cite{BM}. 

\begin{lem}\label{energy_inequality}
Let $\{\varphi_{t}\ |\ t \in [0, 1] \}$ be an arbitrary smooth family of solution of (\ref{MA2}). 
Then 
\begin{equation*}
\frac{d M_{\eta}(\varphi_{t})}{dt} 
= -(1-t)(2n + 2)\frac{d}{dt}\left( I_{\eta}(\varphi_{t}) - J_{\eta}(\varphi_{t})\right)
\leq 0. 
\end{equation*}
\end{lem}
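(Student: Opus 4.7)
The proof follows the strategy of Bando--Mabuchi \cite{BM}, adapted to the transverse Kähler setting. The plan is to first convert both derivatives $dM_\eta/dt$ and $d(I_\eta - J_\eta)/dt$ into the same integral involving $\dot\varphi_t\,\Box_{\varphi_t}\varphi_t$, and then use the differentiated continuity equation (\ref{MA2}) to establish the sign.

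\emph{Step 1: an identity for the transverse scalar curvature.} Apply $-\sqrt{-1}\partial_B\bar\partial_B$ to the logarithm of (\ref{MA2}). Since $\rho^T_{\varphi_t}-\rho^T = -\sqrt{-1}\partial_B\bar\partial_B\log(\det g^T_{\varphi_t}/\det g^T)$ and $\rho^T - (2n+2)\omega^T = \sqrt{-1}\partial_B\bar\partial_B h$, one recovers the identity of Remark~\ref{remark_positive},
\[
\rho^T_{\varphi_t} \;=\; t(2n+2)\,\omega^T_{\varphi_t} + (1-t)(2n+2)\,\omega^T.
\]
Taking the transverse trace with respect to $\omega^T_{\varphi_t}$ and using $\mathrm{tr}_{\omega^T_{\varphi_t}}\omega^T = n + \Box_{\varphi_t}\varphi_t$, we obtain the pointwise formula
\[
s^T(\varphi_t) - n(2n+2) \;=\; (1-t)(2n+2)\,\Box_{\varphi_t}\varphi_t.
\]

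\emph{Step 2: differentiating $M_\eta$ and $I_\eta - J_\eta$.} Substituting the identity of Step 1 into the integrand defining $M_\eta$ gives
\[
\frac{dM_\eta(\varphi_t)}{dt} \;=\; -\frac{(1-t)(2n+2)}{V}\int_{S} \dot\varphi_t\,\Box_{\varphi_t}\varphi_t\,\bigl(\tfrac12 d\eta_{\varphi_t}\bigr)^n\wedge \eta_{\varphi_t}.
\]
On the other hand, differentiating the definitions of $I_\eta$ and $J_\eta$ in the parameter and integrating by parts (this is a routine transverse version of the Kähler identity in \cite{BM}; the computation uses that $\tfrac{d}{dt}(\tfrac12 d\eta_{\varphi_t})^n\wedge \eta_{\varphi_t} = -\Box_{\varphi_t}\dot\varphi_t\,(\tfrac12 d\eta_{\varphi_t})^n\wedge \eta_{\varphi_t}$ together with the self-adjointness of $\Box_{\varphi_t}$) yields
\[
\frac{d(I_\eta - J_\eta)(\varphi_t)}{dt} \;=\; \frac{1}{V}\int_{S} \dot\varphi_t\,\Box_{\varphi_t}\varphi_t\,\bigl(\tfrac12 d\eta_{\varphi_t}\bigr)^n\wedge \eta_{\varphi_t}.
\]
Combining the two displayed identities yields the equality claimed in the lemma.

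\emph{Step 3: the sign.} For the inequality $dM_\eta(\varphi_t)/dt\le 0$, since the prefactor $-(1-t)(2n+2)$ is nonpositive for $t\in[0,1]$, it suffices to show that
\[
\int_S \dot\varphi_t\,\Box_{\varphi_t}\varphi_t \,\bigl(\tfrac12 d\eta_{\varphi_t}\bigr)^n\wedge \eta_{\varphi_t} \;\ge\; 0.
\]
Differentiating (\ref{MA2}) in $t$ gives the basic identity $\Box_{\varphi_t}\dot\varphi_t - t(2n+2)\dot\varphi_t = (2n+2)\varphi_t + c(t)$, where $c(t) = \dot L_\eta(\varphi_t)$ is spatially constant. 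Using self-adjointness of $\Box_{\varphi_t}$ to move $\Box_{\varphi_t}$ from $\varphi_t$ onto $\dot\varphi_t$ in the integral, and combining with the Lichnerowicz-type lower bound $\lambda_1(\Box_{\varphi_t}) \ge t(2n+2)$ on functions of zero mean (which is available from Step 1 because $\rho^T_{\varphi_t}\ge t(2n+2)\omega^T_{\varphi_t}$, exactly as in the proof of Proposition~\ref{openness}), one obtains the required nonnegativity.

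\emph{Main obstacle.} The only delicate point is the final sign argument in Step~3: the integral $\int \dot\varphi_t\Box_{\varphi_t}\varphi_t\,dV_{\varphi_t}$ is not manifestly nonnegative, and one must combine the differentiated Monge--Ampère equation with the transverse spectral gap to extract nonnegativity. Everything else is either a direct computation carried over from the Kähler setting or an immediate consequence of the transverse $\partial\bar\partial$-formalism developed in Section~2.
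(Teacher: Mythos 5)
Your proposal is correct and follows exactly the route the paper intends: the paper gives no details for this lemma and simply defers to Bando--Mabuchi, and your three steps (tracing the identity $\rho^{T}_{\varphi_{t}} = t(2n+2)\omega^{T}_{\varphi_{t}} + (1-t)(2n+2)\omega^{T}$ to get $s^{T}(\varphi_{t}) - n(2n+2) = (1-t)(2n+2)\Box_{\varphi_{t}}\varphi_{t}$, matching this against the formula (\ref{differential}) for $\frac{d}{dt}(I_{\eta}-J_{\eta})$, and then using the differentiated equation together with the Lichnerowicz bound $\lambda_{1}(\Box_{\varphi_{t}}) \geq t(2n+2)$ to get the sign) are precisely that argument transplanted to the transverse setting. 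The only detail worth writing out explicitly in Step 3 is the substitution $(2n+2)\varphi_{t} = \Box_{\varphi_{t}}\dot\varphi_{t} - t(2n+2)\dot\varphi_{t} - c(t)$ into $\int_{S}\varphi_{t}\,\Box_{\varphi_{t}}\dot\varphi_{t}\,dV_{\varphi_{t}}$, which turns the integral into $\frac{1}{2n+2}\sum_{k\geq 1}a_{k}^{2}\lambda_{k}\bigl(\lambda_{k} - t(2n+2)\bigr) \geq 0$ in an eigenbasis of $\Box_{\varphi_{t}}$.
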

Combining Proposition \ref{a priori}, Lemma \ref{energy_inequality} and Proposition \ref{functional_inequality}, we obtain the following result. 

\begin{thm}\label{sol}
Let $0< \tau <1$. 
Then any solution $\varphi_{\tau}$ of (\ref{MA2}) at $t = \tau$ uniquely extends to a smooth family $\{\varphi_{t}\ |\ t \in [0, \tau] \}$ of solutions of (\ref{MA2}). 
In particular the equation (\ref{MA2}) admits at most one solution at $t = \tau$. 
\end{thm}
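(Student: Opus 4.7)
The plan is to run a continuity method backwards in $t$, starting from the given solution $\varphi_\tau$ at $t = \tau$ and extending it down to $t = 0$. Define
$$T := \{\, s \in [0, \tau] : \varphi_\tau \text{ extends to a smooth family } \{\varphi_t\}_{t \in [s, \tau]} \text{ of solutions of (\ref{MA2})} \,\},$$
so that $\tau \in T$ trivially. The theorem amounts to showing $0 \in T$ together with uniqueness of the family, and the strategy is to prove that $T$ is both open and closed in $[0, \tau]$.

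Openness of $T$ is essentially a repackaging of Proposition \ref{openness}: at any interior $s \in T \cap (0, 1)$, the implicit-function argument there provides a two-sided smooth extension of the family; at the endpoint $s = 0$, Corollary \ref{existence0} supplies the unique solution $\varphi_0$ and the implicit function theorem still applies (the linearization is invertible there as in Case~1 of Proposition \ref{openness}). Hence once $s^* := \inf T$ has been shown to belong to $T$, the family can be continued strictly below $s^*$, forcing $s^* = 0$.

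For closedness, we need uniform a priori estimates along $\{\varphi_t\}_{t \in (s^*, \tau]}$. The chain of estimates is the following. Lemma \ref{energy_inequality} gives
$$\frac{d}{dt}\bigl(I_\eta(\varphi_t) - J_\eta(\varphi_t)\bigr) \geq 0 \quad \text{on } [s^*, \tau],$$
so $I_\eta(\varphi_t) - J_\eta(\varphi_t) \leq I_\eta(\varphi_\tau) - J_\eta(\varphi_\tau)$ is bounded by a constant depending only on the fixed datum $\varphi_\tau$. The comparison between $I_\eta$ and $I_\eta - J_\eta$ indicated by Proposition \ref{functional_inequality} of the Appendix then yields a uniform bound on $I_\eta(\varphi_t)$. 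Plugging this into Proposition \ref{a priori}, whose proof crucially invokes Theorem A via the diameter estimate used in Proposition \ref{osc}, gives a uniform $C^0$ bound on $\varphi_t$. From this, El-Kacimi-Alaoui's transverse analogue of Yau's estimate produces a uniform $C^{2,\alpha}$ bound, and Schauder bootstrap on the elliptic equation (\ref{MA2}) yields $C^{k,\alpha}$ bounds in every order. An Arzel\`a--Ascoli argument then extracts a smooth limit that solves (\ref{MA2}) at $t = s^*$, putting $s^* \in T$.

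Uniqueness follows from openness: if two smooth families $\{\varphi_t\}$ and $\{\tilde\varphi_t\}$ on $[0, \tau]$ both agree with $\varphi_\tau$ at $t = \tau$, Proposition \ref{openness} shows that the set of $t$ where they coincide is open; it is also closed by continuity and non-empty, hence equal to $[0, \tau]$. Applied at $t = \tau$ this gives the ``in particular'' statement, for any two solutions at $t = \tau$ would both extend backwards to the unique $\varphi_0$ of Corollary \ref{existence0} and therefore coincide. The main obstacle — and the point where this argument departs most visibly from the K\"ahler case of Bando--Mabuchi — is precisely the $C^0$ estimate via Proposition \ref{a priori}, since the infimum-side bound in Proposition \ref{osc} is exactly what Theorem A was designed to provide.
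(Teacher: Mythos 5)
Your proposal is correct and follows essentially the same route as the paper: the paper phrases the argument as a contradiction with a maximal family $\{\varphi_t\ |\ t\in(\sigma,\tau]\}$ rather than an open--closed set $T$, but the substance is identical --- monotonicity of $I_\eta - J_\eta$ from Lemma \ref{energy_inequality}, the bound $I_\eta \leq (n+1)(I_\eta - J_\eta)$ from Proposition \ref{functional_inequality}, the $C^0$ estimate from Proposition \ref{a priori} (resting on Theorem A via Proposition \ref{osc}), the $C^{2,\alpha}$ bound from El Kacimi-Alaoui's transverse Yau estimate, and Arzel\`a--Ascoli to reach the endpoint. Your uniqueness argument via the implicit function theorem and the unique solution at $t=0$ also matches the paper's.
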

In particular, if $\mathfrak{h}_{0} = \{ 0 \}$ then there exists at most one Sasaki-Einstein metric of $S$ which is compatible with $g$ by Remark \ref{openness2} and Theorem \ref{sol}. 

\begin{proof}[proof of Theorem \ref{sol}]
First note that a smooth family $\{\varphi_{t}\ |\ t \in [0, \tau] \}$ of solutions of (\ref{MA2}) is unique if it exists because of the implicit function theorem and the uniqueness of solutions of (\ref{MA2}) at $t = 0$. 
Hence it is sufficient to show that a solution $\varphi_{\tau}$ of (\ref{MA2}) at $t = \tau$ can be extended to a smooth family $\{\varphi_{t}\ |\ t \in [0, \tau] \}$ of solutions of (\ref{MA2}). 
We therefore assume, for contradiction, that any such extension is impossible. 
Then by Proposition \ref{openness} we have a maximal smooth family $\{ \varphi_{t}\ |\ t \in (\sigma, \tau]\}$ of solutions of (\ref{MA2}) for some $0 \leq \sigma$. 
In this proof we always denote by $t \in \mathbb{R}$ a real number satisfying $\sigma < t \leq \tau$. 
For arbitrary solution $\varphi_{t}$ we have 
\begin{equation*}
I_{\eta}(\varphi_{t}) 
\leq (n + 1) \left( I_{\eta}(\varphi_{t}) - J_{\eta}(\varphi_{t}) \right) 
\leq (n + 1) \left( I_{\eta}(\varphi_{\tau}) - J_{\eta}(\varphi_{\tau}) \right)  
\end{equation*}
by Lemma \ref{energy_inequality} and Proposition \ref{functional_inequality}. 
In particular, there exists a constant $A > 0$ which is independent of $t$ such that $I_{\eta}(\varphi_{t}) \leq A$. 
Hence by Lemma \ref{a priori}, there exists a constant $C > 0$ which depends only $A$, $n$ and the initial metric $g$ such that 
$\sup_{S}\varphi_{t} \leq C$. 
By El Kacimi-Alaoui's generalization of Yau's estimate, we can find a constant $C_{1} > 0$ such that $\| \varphi_{t} \|_{C^{2, \alpha}} \leq C_{1}$ for all $t \in (\sigma, \tau]$ and fixed $\alpha \in (0, 1)$. 
We now choose an arbitrary decreasing sequence $\{ t_{j} \}_{j=1}^{\infty} \subset (\sigma, \tau]$ such that $\lim_{j \to \infty}t_{j} = \sigma$. 
Then by Arzela-Ascoli's theorem, there exists a convergent subsequence of $\{ \varphi_{t_{j}} \}_{j=1}^{ \infty }$, which leads to a contradiction to the maximality of $\{ \varphi_{t}\ |\ t \in (\sigma, \tau]\}$. 
\qed \end{proof}
\subsection{Solutions at $t = 1$}
Next we mention at $t = 1$. 
By assumption, $\mathscr{E} \neq \phi$ and hence the equation (\ref{MA1}) has a solution at $t = 1$. 
We begin the following lemma. 
\begin{lem}\label{tangent_space}
Let $\{ \varphi_{t} \}_{t \in [0, 1]}$ be a smooth family of solutions of equation (\ref{MA1}). 
Put $\varphi := \varphi_{1}$ and $\eta_{SE} := \eta_{\varphi}$. 
Then 
\begin{equation}\label{kernel}
\int_{S}\varphi \psi (\frac{1}{2}d\eta_{SE})^{n} \wedge \eta_{SE} = 0
\end{equation}
for each $\psi \in \ker(\Box_{SE} - (2n+2))$, where $\Box_{SE}$ is the basic complex Laplacian for the Sasaki-Einstein metric $g_{SE} = g_{\varphi}$. 
\end{lem}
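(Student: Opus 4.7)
The plan is to differentiate the Monge--Amp\`ere equation (\ref{MA1}) in $t$ and derive a linear relation between $\dot\varphi_t$ and $\varphi_t$ involving $\Box_{\varphi_t}$. Taking the logarithm of (\ref{MA1}) and differentiating in $t$ gives, in a foliation chart,
\begin{equation*}
(g^{T}_{\varphi_t})^{i\bar j}\frac{\partial^{2}\dot\varphi_t}{\partial z^{i}\partial\bar z^{j}} = -(2n+2)\varphi_t - t(2n+2)\dot\varphi_t.
\end{equation*}
Using the formula $\Box_{\varphi_t}\dot\varphi_t = -(g^{T}_{\varphi_t})^{i\bar j}\partial_{i}\partial_{\bar j}\dot\varphi_t$ from the proposition in Section~2.5, this rewrites as
\begin{equation*}
(\Box_{\varphi_t} - t(2n+2))\dot\varphi_t = (2n+2)\varphi_t.
\end{equation*}

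Next, specialize to $t=1$. Since the family $\{\varphi_t\}$ is smooth and each $\varphi_t$ is basic, $\dot\varphi_1 \in C_{B}^{\infty}(S)$, and the identity becomes
\begin{equation*}
(\Box_{SE} - (2n+2))\dot\varphi_{1} = (2n+2)\varphi.
\end{equation*}
Now pair both sides with an arbitrary $\psi \in \ker(\Box_{SE}-(2n+2))$ against the Sasaki-Einstein volume form $(\tfrac{1}{2}d\eta_{SE})^{n}\wedge\eta_{SE}$. Since $\psi,\dot\varphi_1\in C_B^\infty(S)$ and $\Box_{SE}$ is self-adjoint on basic functions with respect to this pairing, I obtain
\begin{equation*}
(2n+2)\int_{S}\varphi\psi\,(\tfrac{1}{2}d\eta_{SE})^{n}\wedge\eta_{SE} = \int_{S}\dot\varphi_{1}\cdot(\Box_{SE} - (2n+2))\psi\,(\tfrac{1}{2}d\eta_{SE})^{n}\wedge\eta_{SE} = 0,
\end{equation*}
which yields the desired orthogonality relation.

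The only real content is (i) verifying the linearization formula and (ii) invoking self-adjointness of $\Box_{SE}$ on basic functions. Both are essentially formal, so I do not expect a genuine obstacle; the main thing to be careful about is the sign convention for $\Box_B$ and the fact that differentiation in $t$ preserves basicness so that one may legitimately apply the self-adjointness of $\Box_{SE}$ as an operator on $C_B^\infty(S)$.
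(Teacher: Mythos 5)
Your proposal is correct and is essentially the paper's own argument: differentiate the logarithm of (\ref{MA1}) at $t=1$ to obtain $(\Box_{SE}-(2n+2))\dot\varphi_t|_{t=1}=(2n+2)\varphi$, then pair with $\psi\in\ker(\Box_{SE}-(2n+2))$ and use self-adjointness of $\Box_{SE}$ on $C_B^{\infty}(S)$ with respect to the volume form $(\tfrac{1}{2}d\eta_{SE})^{n}\wedge\eta_{SE}$. The paper states only the linearized identity and says the lemma "follows immediately"; you have simply written out that last step explicitly.
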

\begin{proof}
By differentiating the logarithms of both sides of equality (\ref{MA1}) at $t = 1$, we obtain 
\begin{equation*}
(\Box_{SE} - (2n+2))\dot \varphi_{t}|_{t = 1} = (2n + 2) \varphi. 
\end{equation*}
Then the lemma follows immediately. 
\qed \end{proof}

Consider the $G$-action on $\mathscr{E}$. Let $O$ be an arbitrary $G$-orbit in $\mathscr{E}$. 
For each $g_{SE} \in \mathscr{E}$, we can uniquely associate a function $\varphi = \varphi(g_{SE}) \in \mathscr{H}$ such that $g_{SE} = g_{\varphi}$ and $\varphi$ satisfies the equation (\ref{MA1}) at $t = 1$. 
Hence we can regard $O$ as a subset of the set of all solutions of the equation (\ref{MA1}) at $t = 1$ in $\mathscr{H}$. 
By the identification, we endow $O$ with the topology induced from the $C^{2, \alpha}$-norm on $C_{B}^{\infty}(S)$. 
Then the $G$-action on $O$ is clearly continuous. 
Hence the topology on $O$ coincides with the natural topology of the homogeneous space $O \cong G/K_{g_{SE}}$, where $K_{g_{SE}}$ is the isotropic subgroup of $G$ at $g_{SE}$. 
For each $\psi \in \ker(\Box_{SE} - (2n+2))$ we have associated normalized Hamiltonian holomorphic vector field $X_{\psi}$; 
\begin{equation*}
X_{\psi} = \psi \xi + \nabla^{i}\psi \frac{\partial}{\partial z^{i}} - \eta(\nabla^{i}\psi \frac{\partial}{\partial z^{i}}) \xi 
\end{equation*}
for a foliation coordinate $(x_{0}, z_{1}, \cdots, z_{n})$ (see Theorem 5.1 of \cite{FOW}). 
Let $f_{\psi, t}$ be a corresponding one-parameter group; $f_{\psi, t} = \exp(t X_{\psi}^{\mathbb{R}})$, where $X_{\psi}^{\mathbb{R}}$ is the real part of $X_{\psi}$. 
We put $g_{SE}(t) := f_{\psi, t}^{*}g_{SE}$ and $\varphi(t) := \varphi(g_{SE}(t))$. 
Then we can check easily that $\dot \varphi(0) = \psi + C$ for some $C \in \mathbb{R}$. 
On the other hand, since $\varphi(t)$ satisfies the equation (\ref{MA1}) we have $\Box_{SE}\dot \varphi(0) = (2n + 2)\dot \varphi(0)$ by differentiating the equality (\ref{MA1}). 
This shows that $C = 0$ and hence $\dot \varphi(0) = \psi$. 

Conversely, for each smooth curve $g(t) \in O$ with $g(0) = g_{SE}$, take the corresponding smooth functions $\varphi(t) \in \mathscr{H}$. 
Then we have $\Box_{SE}\dot \varphi(0) = (2n + 2)\dot \varphi(0)$ by differentiating the identity (\ref{MA1}). 
Thus we obtain 
\begin{equation*}
T_{g_{SE}}O \cong \ker(\Box_{SE} - (2n+2)). 
\end{equation*}

Define $\iota := (I_{\eta} - J_{\eta})|_{O}\ (\geq 0) : O \to \mathbb{R}$. 
The basic properties of $\iota$ are as follows. 
\begin{lem}\label{critical}
Let $g_{SE} \in O$. 
Then the followings are equivalent. 
\begin{enumerate}
\item $g_{SE}$ is a critical point of $\iota$, 
\item $\varphi(g_{SE})$ satisfies the condition (\ref{kernel}). 
\end{enumerate}
\end{lem}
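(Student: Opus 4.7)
The plan is to differentiate $\iota = (I_\eta - J_\eta)|_O$ at $g_{SE}$ along a curve in the orbit $O$ and identify the result with a linear functional on $\ker(\Box_{SE} - (2n+2))$, so that the critical point condition becomes precisely the vanishing of $\int_S \varphi\psi\,\Omega_{SE}$. Concretely, I take a smooth curve $g_{SE}(t) \in O$ with $g_{SE}(0) = g_{SE}$, set $\varphi(t) := \varphi(g_{SE}(t))$, and use the identification $T_{g_{SE}}O \cong \ker(\Box_{SE} - (2n+2))$ established just above to deduce that $\dot\varphi(0) = \psi$ ranges over the whole kernel as the curve varies.

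For the first variation I would rewrite $I_\eta - J_\eta = L_\eta - \frac{1}{V}\int_S \varphi\,\Omega_\varphi$, where $\Omega_\varphi := (\tfrac{1}{2}d\eta_\varphi)^n \wedge \eta_\varphi$; this is immediate from the definitions together with $\int_0^1 \dot\varphi(s)\,ds = \varphi$. Along a path the defining formula of $L_\eta$ gives $\frac{d}{dt}L_\eta(\varphi(t)) = \frac{1}{V}\int_S \dot\varphi(t)\,\Omega_{\varphi(t)}$, so the $\dot\varphi\,\Omega$ contributions cancel and
\begin{equation*}
\frac{d}{dt}(I_\eta - J_\eta)(\varphi(t)) = -\frac{1}{V}\int_S \varphi(t)\,\frac{d}{dt}\Omega_{\varphi(t)}.
\end{equation*}
The derivative $\frac{d}{dt}\Omega_{\varphi(t)}$ splits into a transverse piece $n\omega_t^{n-1}\wedge i\partial_B\bar\partial_B\dot\varphi(t)\wedge \eta_{\varphi(t)}$ and a Reeb piece $\omega_t^n\wedge 2d_B^c\dot\varphi(t)$ coming from $\dot\eta_{\varphi(t)} = 2d_B^c\dot\varphi(t)$. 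The Reeb piece vanishes for a degree reason: its integrand is a basic $(2n+1)$-form on a $(2n+1)$-dimensional Sasaki manifold, and basic forms have transverse degree at most $2n$. The transverse piece equals $-\Box_t\dot\varphi(t)\cdot\Omega_{\varphi(t)}$ by the standard trace identity $n\omega^{n-1}\wedge i\partial\bar\partial\psi = -(\Box_B\psi)\omega^n$, consistent with the paper's convention $\Box_B\psi = -(g^T)^{i\bar j}\partial_i\partial_{\bar j}\psi$.

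Combining, $\frac{d}{dt}(I_\eta - J_\eta)(\varphi(t)) = \frac{1}{V}\int_S \varphi(t)\,\Box_t\dot\varphi(t)\,\Omega_{\varphi(t)}$, and evaluating at $t = 0$ with $\Box_{SE}\psi = (2n+2)\psi$ yields the first variation $\frac{2n+2}{V}\int_S \varphi\psi\,\Omega_{SE}$. Since $\psi$ ranges over all of $\ker(\Box_{SE} - (2n+2))$, vanishing of this linear functional is exactly condition (\ref{kernel}), which gives the equivalence. The only subtle point in this plan is the degree argument that kills the Reeb term in $\frac{d}{dt}\Omega_{\varphi(t)}$; the remainder is a direct Sasaki analogue of the K\"ahler first variation formula used by Bando--Mabuchi in \cite{BM}.
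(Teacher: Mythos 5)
Your proof is correct and follows essentially the same route as the paper: the paper's proof consists of citing the first variation formula (\ref{differential}), $\frac{d}{dt}(I_{\eta}-J_{\eta})(\varphi_{t})=\frac{1}{V}\int_{S}\varphi_{t}\,\Box_{t}\dot\varphi_{t}\,(\frac{1}{2}d\eta_{\varphi_{t}})^{n}\wedge\eta_{\varphi_{t}}$, and evaluating it on $T_{g_{SE}}O\cong\ker(\Box_{SE}-(2n+2))$ exactly as you do. The only difference is that you re-derive (\ref{differential}) (correctly, including the degree argument killing the $d_{B}^{c}\dot\varphi$ term), a computation the paper relegates to the appendix as ``a simple calculation.''
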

This is immediately from (\ref{differential}). 
The following lemma shows the existence of a minimizer of $\iota$. 
\begin{lem}\label{properness}
The functional $\iota$ is proper. 
In particular, its minimum is always attained at some point of the orbit $O$. 
\end{lem}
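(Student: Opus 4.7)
The plan is to reduce properness of $\iota$ to the $C^{0}$-estimate for Sasaki-Einstein potentials, upgrade to smooth convergence via the transverse Yau estimates, and finally verify that subsequential limits remain in $O$. Given $\{g_{k}\} \subset O$ with $\iota(g_{k}) \leq M$, put $\varphi_{k} := \varphi(g_{k}) \in \mathscr{H}$, so that each $\varphi_{k}$ solves (\ref{MA1}) at $t = 1$. Proposition \ref{functional_inequality} gives $I_{\eta}(\varphi_{k}) \leq (n+1)\iota(g_{k}) \leq (n+1)M$, and since $\rho^{T}_{\varphi_{k}} = (2n+2)\omega^{T}_{\varphi_{k}}$ the hypothesis of Proposition \ref{osc} is satisfied with $t = 1$; together these bound ${\rm osc}_{S}\varphi_{k}$ uniformly. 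Integrating (\ref{MA1}) produces a point $p_{k} \in S$ with $\varphi_{k}(p_{k}) = h(p_{k})/(2n+2)$, which combined with the oscillation bound yields $\|\varphi_{k}\|_{C^{0}} \leq C$ uniformly in $k$.

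With a uniform $C^{0}$-bound in hand, the transverse analogue of Yau's estimate due to El Kacimi-Alaoui supplies $\|\varphi_{k}\|_{C^{2,\alpha}} \leq C'$, which bootstraps via standard elliptic theory to $\|\varphi_{k}\|_{C^{k,\alpha}} \leq C_{k}$ for every $k$. By Arzela-Ascoli a subsequence converges in $C^{2,\alpha'}$ for any $\alpha' < \alpha$ to some $\varphi_{\infty} \in \mathscr{H}$, which again satisfies (\ref{MA1}) at $t = 1$, so that $g_{\infty} := g_{\varphi_{\infty}} \in \mathscr{E}$.

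The main obstacle is to show $g_{\infty}$ lies in $O$ itself rather than in some other $G$-orbit. Fix a base point $g_{0} \in O$ and write $g_{k} = f_{k}^{*} g_{0}$ with $f_{k} \in G$. Since $g_{0}$ is Sasaki-Einstein, its isotropy $K_{g_{0}} \subset G$ is a maximal compact subgroup of the reductive group $G$, so one has a polar decomposition $f_{k} = \kappa_{k} \exp(\sqrt{-1} X_{k})$ with $\kappa_{k} \in K_{g_{0}}$ and $X_{k}$ in the real part of $\mathfrak{h}_{0}$. The Sasaki analogue of the Bando-Mabuchi strict convexity of $I_{\eta} - J_{\eta}$ along one-parameter subgroups $\{\exp(s X)^{*} g_{0}\}$ implies that $\iota(f_{k}^{*}g_{0}) \to \infty$ whenever $\|X_{k}\| \to \infty$, so $\{X_{k}\}$ stays bounded. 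Passing to a subsequence, $\kappa_{k} \to \kappa_{\infty} \in K_{g_{0}}$ and $X_{k} \to X_{\infty}$, whence $f_{k} \to f_{\infty} \in G$ and $g_{\infty} = f_{\infty}^{*} g_{0} \in O$. Since $\iota \geq 0$ by Proposition \ref{functional_inequality}, the infimum is finite and is therefore attained.
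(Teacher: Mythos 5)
Your derivation of the a priori bounds is exactly the paper's argument: a bound on $\iota$ gives $I_{\eta}(\varphi_{k})\leq (n+1)M$ by Proposition \ref{functional_inequality}; since $\rho^{T}_{\varphi_{k}}=(2n+2)\omega^{T}_{\varphi_{k}}$, Proposition \ref{osc} with $t=1$ bounds ${\rm osc}_{S}\varphi_{k}$; integrating (\ref{MA1}) pins down the value of $\varphi_{k}$ at a point and hence gives a uniform $C^{0}$-bound; and El Kacimi-Alaoui's transverse version of Yau's estimate upgrades this to $C^{2,\alpha}$ and, by bootstrapping, to all orders. Up to extracting a subsequence converging to a limit in $\mathscr{E}$, this coincides with the proof in the paper.

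The final paragraph, however, contains a genuine gap. You invoke a ``Sasaki analogue of the Bando-Mabuchi strict convexity of $I_{\eta}-J_{\eta}$ along one-parameter subgroups'' to conclude that $\iota(f_{k}^{*}g_{0})\to\infty$ whenever $\|X_{k}\|\to\infty$. No such convexity statement is proved in this paper, and nothing here implies it: formula (\ref{differential}) gives only the first derivative of $I_{\eta}-J_{\eta}$ along a path, and the second-derivative computation of Lemma \ref{Hessian} is carried out only at critical points. Even granting convexity in each direction, convexity alone does not force divergence at infinity, and passing from divergence along each ray to divergence as $\|X_{k}\|\to\infty$ needs a further uniformity argument; in effect this step assumes the very properness of $\iota$ on $G/K_{g_{0}}$ that is to be proved. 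The polar decomposition $f_{k}=\kappa_{k}\exp(\sqrt{-1}X_{k})$ also presupposes that $G$ is reductive with $K_{g_{0}}$ maximal compact, a Matsushima-type theorem not established here. To be fair, the paper's own proof is silent on why the limit lies in $O$ rather than in another orbit; the clean way to close this is to observe that each $G$-orbit is open in $\mathscr{E}$: the identification $T_{g_{SE}}O\cong\ker(\Box_{SE}-(2n+2))$ combined with the implicit-function-theorem description of the solution set of (\ref{MA1}) at $t=1$ (as in the proof of Proposition \ref{continuation}) shows that every Sasaki-Einstein metric sufficiently close to one in $O$ already belongs to $O$, so $g_{k}\to g_{\infty}\in O^{\prime}$ forces $O=O^{\prime}$.
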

\begin{proof}
Let $g_{SE} \in O$ with $|\iota(g_{SE})| \leq r$ for some $r > 0$. 
Then by Proposition \ref{functional_inequality} we have $I_{\eta}(\varphi) \leq (n+1)r$ for $\varphi := \varphi(g_{SE})$. 
Since $\rho_{\varphi}^{T} = (2n + 2)\omega_{\varphi}^{T}$, we have 
\begin{align*}
{\rm osc}_{S}\varphi 
&\leq C_{r}
\end{align*}
by Proposition \ref{osc}, where $K := \sup (-G_{\eta})$ and 
\begin{equation*}
C_{r} = (n + 1)r + 2n\left( KV_{0} + \gamma \frac{(2\pi)^{2}(2n-1)}{6n}\right). 
\end{equation*}
On the other hand, from (\ref{MA1}) we obtain 
\begin{equation*}
\int_{S}(\frac{1}{2}d\eta)^{n} \wedge \eta 
= \int_{S}(\frac{1}{2}d\eta_{\varphi})^{n} \wedge \eta_{\varphi} 
= \int_{S}\exp(-(2n + 2)\varphi + h)(\frac{1}{2}d\eta)^{n} \wedge \eta 
\end{equation*}
and hence 
$(2n + 2)\varphi(p) = h(p)$ for some $p \in S$. 
Hence we obtain 
\begin{align*}
\sup_{S}\varphi 
&= {\rm osc}_{S}\varphi - \inf_{S}\varphi 
\leq {\rm osc}_{S}\varphi - \varphi(p) \\
&= {\rm osc}_{S}\varphi - \frac{h(p)}{2n + 2} 
\leq {\rm osc}_{S}\varphi + \frac{1}{2n + 2}\sup_{S}|h| \\
&\leq C_{r} + \frac{1}{2n + 2}\sup_{S}|h|. 
\end{align*}
Thus if we put $C^{1}_{r} := C_{r} + \frac{1}{2n + 2}\sup_{S}|h|$ we have $\sup_{S}\varphi \leq C^{1}_{r}$. 
Then the result follows from El Kacimi-Alaoui's generalization of Yau's estimate \cite{Yau}. 
\qed \end{proof}
Then we shall calculate the Hessian of $\iota$ at a critical point. 
For the proof, we need the following formula for $g_{SE} \in \mathscr{E}$, which is shown by the same calculation as K${\rm \ddot a}$hler geometry (see \cite{BM} and Theorem 5.1 in \cite{FOW}); 
\begin{align}\label{Toshiki} 
\Box_{SE}\langle \partial_{B}\psi, \partial_{B} \varphi^{\prime} \rangle 
&= -\langle \partial_{B} \bar \partial_{B} \psi, \partial_{B} \bar \partial_{B} \varphi^{\prime} \rangle + \langle \partial_{B}(\Box_{SE}\psi), \partial_{B} \varphi^{\prime} \rangle, 
\end{align}
for any $\varphi^{\prime} \in \ker(\Box_{SE} - (2n + 2))$ and $\psi \in C_{B}^{\infty}(S)$, where $\langle \cdot, \cdot \rangle$ is the natural Hermitian pairings on basic forms induced from the transverse K${\rm \ddot a}$hler metric $g_{SE}^{T}$. 
\begin{lem}\label{Hessian}
Let $g_{SE} \in O$ be a critical point of $\iota$. 
Then the Hessian $({\rm Hess\ \iota})_{g_{SE}}$ of $\iota$ at $g_{SE}$ is given by 
\begin{equation*}
({\rm Hess\ \iota})_{g_{SE}}(\varphi^{\prime}, \varphi^{\prime \prime}) 
= \frac{2n + 2}{V}\int_{S}\left( 1 - \frac{1}{2}\Box_{SE}\varphi \right) \varphi^{\prime} \varphi^{\prime \prime} (\frac{1}{2}d\eta_{\varphi})^{n} \wedge \eta_{\varphi}
\end{equation*}
for each $\varphi^{\prime}, \varphi^{\prime \prime} \in \ker (\Box_{SE} - (2n + 2)) \cong T_{g_{SE}}O$, where $\varphi := \varphi(g_{SE})$. 
\end{lem}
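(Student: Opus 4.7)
The plan is to compute the Hessian by taking a two-parameter family through $g_{SE}$ inside the orbit $O$, applying the first variation formula (\ref{differential}) for $I_{\eta} - J_{\eta}$, and then using the Sasaki-Einstein equation together with the commutation identity (\ref{Toshiki}). To this end, let $X_{\varphi'}, X_{\varphi''} \in \mathfrak{h}_{0}$ be the normalized Hamiltonian holomorphic vector fields associated to $\varphi', \varphi''$ and set
\[
g(s,t) := (\exp sX_{\varphi'}^{\mathbb{R}})^{*}(\exp tX_{\varphi''}^{\mathbb{R}})^{*}g_{SE}, \qquad \varphi(s,t) := \varphi(g(s,t)) \in \mathscr{H}.
\]
Each $g(s,t)$ lies in $O$, and the discussion immediately preceding this lemma shows that $\partial_{s}\varphi|_{(0,0)} = \varphi'$ and $\partial_{t}\varphi|_{(0,0)} = \varphi''$; in particular, since $g_{SE}$ is a critical point of $\iota$, one has $({\rm Hess}\,\iota)_{g_{SE}}(\varphi', \varphi'') = \partial_{s}\partial_{t}(I_{\eta} - J_{\eta})(\varphi(s,t))|_{(0,0)}$.

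Applying the first variation formula (\ref{differential}), I would write $\partial_{t}(I_{\eta} - J_{\eta})(\varphi(s,t))$ as an integral of the schematic form $\frac{c}{V}\int_{S}\varphi(s,t)\, L_{\varphi(s,t)}(\partial_{t}\varphi)\,d\mu_{\varphi(s,t)}$, where $L_{\varphi}$ is a multiple of $\Box_{\varphi}$ and $d\mu_{\varphi} := (\frac{1}{2}d\eta_{\varphi})^{n}\wedge\eta_{\varphi}$. Differentiating in $s$ and evaluating at the origin, the contributions arising from varying $L_{\varphi(s,t)}$ and $d\mu_{\varphi(s,t)}$ produce integrals of the form $\int_{S}\varphi \cdot Q(\varphi', \varphi'')\,d\mu_{SE}$ with $Q$ quadratic in the tangent vectors and their derivatives; parts of these can be absorbed using the critical-point condition (\ref{kernel}) from Lemma \ref{critical}, which asserts that $\int_{S}\varphi\psi\,d\mu_{SE} = 0$ for every $\psi \in \ker(\Box_{SE} - (2n+2))$. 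The remaining leading contribution comes from $\partial_{s}\partial_{t}\varphi$ together with $\partial_{s}\varphi \cdot L_{SE}(\partial_{t}\varphi)$, where the latter equals $(2n+2)\varphi'\varphi''$ by the eigenvalue identity $\Box_{SE}\varphi'' = (2n+2)\varphi''$.

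The key technical step is to produce the factor $1 - \frac{1}{2}\Box_{SE}\varphi$. For this I would use $\rho^{T}_{SE} = (2n+2)\omega^{T}_{SE}$ to replace transverse Ricci contributions, and then apply the commutation identity (\ref{Toshiki}) with $\psi = \varphi$ paired against $\varphi'$ (and symmetrically against $\varphi''$). This converts an expression $\langle \partial_{B}\bar\partial_{B}\varphi, \partial_{B}\bar\partial_{B}\varphi' \rangle$ into $\langle \partial_{B}(\Box_{SE}\varphi), \partial_{B}\varphi' \rangle - \Box_{SE}\langle \partial_{B}\varphi, \partial_{B}\varphi' \rangle$; after integrating against $\varphi''\,d\mu_{SE}$ and performing one further integration by parts, the second term yields precisely the factor $\Box_{SE}\varphi$ multiplying $\varphi'\varphi''$ in the integrand. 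The main obstacle will be the bookkeeping: several distinct second-variation contributions --- from the variation of $d\mu_{\varphi(s,t)}$, of $\Box_{\varphi(s,t)}$, and from the mixed partial $\partial_{s}\partial_{t}\varphi$ --- each produce cubic integrals in $\varphi, \varphi', \varphi''$ and their derivatives, and the delicate point is to verify that after applying (\ref{Toshiki}) and the Sasaki-Einstein equation, all these terms collapse exactly into the stated formula $\frac{2n+2}{V}\int_{S}(1 - \frac{1}{2}\Box_{SE}\varphi)\varphi'\varphi''\,d\mu_{SE}$.
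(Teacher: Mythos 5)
Your overall skeleton coincides with the paper's: take a two-parameter family in $O$ through $g_{SE}$ with prescribed tangent vectors, differentiate the first variation formula (\ref{differential}), use the critical-point condition (\ref{kernel}) to kill kernel ambiguities, and invoke (\ref{Toshiki}) together with the Einstein equation. However, there is a genuine gap at the crux of the computation, and your description of where (\ref{Toshiki}) enters is misdirected. After applying (\ref{differential}) and the eigenvalue identity $\Box_{s,t}\partial_{t}\varphi_{s,t}=(2n+2)\partial_{t}\varphi_{s,t}$, the $s$-derivative produces three terms: $\int_{S}\varphi'\varphi''\,d\mu$, $\int_{S}\varphi\,\partial_{s}\partial_{t}\varphi|_{0,0}\,d\mu$, and $-\int_{S}\varphi\,\varphi''\,\Box_{SE}\varphi'\,d\mu$ from the variation of the volume form. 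The second of these cannot be evaluated until you identify $\partial_{s}\partial_{t}\varphi|_{0,0}$, and this is the heart of the proof: differentiating the Einstein equation (\ref{MA1}) twice gives
\begin{equation*}
(-\Box_{SE}+(2n+2))\bigl(\partial_{s}\partial_{t}\varphi|_{0,0}\bigr)=\langle\partial_{B}\bar\partial_{B}\varphi',\partial_{B}\bar\partial_{B}\varphi''\rangle,
\end{equation*}
and it is here -- with $\psi=\varphi'$ and the eigenfunction $\varphi''$, \emph{not} with $\psi=\varphi$ -- that (\ref{Toshiki}) is applied, yielding $\langle\partial_{B}\bar\partial_{B}\varphi',\partial_{B}\bar\partial_{B}\varphi''\rangle=(-\Box_{SE}+(2n+2))\langle\partial_{B}\varphi',\partial_{B}\varphi''\rangle$ and hence $\partial_{s}\partial_{t}\varphi|_{0,0}\equiv\langle\partial_{B}\varphi',\partial_{B}\varphi''\rangle$ modulo $\ker(\Box_{SE}-(2n+2))$; the kernel part is then annihilated by (\ref{kernel}). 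Your proposal acknowledges the $\partial_{s}\partial_{t}\varphi$ term but never computes it, so the argument cannot be completed as written.

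Relatedly, your ``key technical step'' -- applying (\ref{Toshiki}) with $\psi=\varphi$ to a pairing $\langle\partial_{B}\bar\partial_{B}\varphi,\partial_{B}\bar\partial_{B}\varphi'\rangle$ -- targets a term that does not actually arise in the second variation: the potential $\varphi$ enters (\ref{differential}) only as a linear multiplier in the integrand, never inside a Hessian pairing. The factor $1-\tfrac{1}{2}\Box_{SE}\varphi$ in the final formula is obtained much more simply: once $\partial_{s}\partial_{t}\varphi$ is substituted, the three terms combine via the product rule $\Box_{SE}(\varphi'\varphi'')=2(2n+2)\varphi'\varphi''-\langle\partial_{B}\varphi',\partial_{B}\varphi''\rangle-\langle\partial_{B}\varphi'',\partial_{B}\varphi'\rangle$ into $\frac{2n+2}{V}\int_{S}\{\varphi'\varphi''-\tfrac{1}{2}\varphi\,\Box_{SE}(\varphi'\varphi'')\}\,d\mu$, and a single integration by parts (self-adjointness of $\Box_{SE}$) moves the Laplacian onto $\varphi$. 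No appeal to $\rho^{T}_{SE}=(2n+2)\omega^{T}_{SE}$ beyond the equation (\ref{MA1}) itself is needed at that stage.
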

\begin{proof}
Let $\{ \varphi_{s, t}\ |\ (-\varepsilon, \varepsilon) \times(-\varepsilon, \varepsilon)\}$ be a smooth family of functions  satisfying the following conditions; 
\begin{equation*}
g_{\varphi_{s, t}} \in O,\ 
\varphi_{0, 0} = \varphi,\ \frac{\partial \varphi_{s, t}}{\partial s} |_{s, t = 0} = \varphi^{\prime},\ \frac{\partial \varphi_{s, t}}{\partial t} |_{s, t = 0} = \varphi^{\prime \prime}. 
\end{equation*}
We shall denote $\Box_{\varphi_{s, t}}$ by $\Box_{s, t}$ for brevity. 
Since $\varphi_{s, t}$ satisfies (\ref{MA1}), we obtain 
\begin{equation}\label{1diff}
(-\Box_{s, t} + (2n + 2))\frac{\partial \varphi_{s, t}}{\partial t} = 0
\end{equation}
by differentiating the equation (\ref{MA1}) with respect to $t$. 
Further differentiation with respect to $s$ yields 
\begin{equation}\label{2diff}
-\left \langle \partial_{B}\bar \partial_{B}\frac{\partial \varphi_{s, t}}{\partial s}, \partial_{B}\bar \partial_{B}\frac{\partial \varphi_{s, t}}{\partial t} \right \rangle_{s, t} + (-\Box_{s, t} + (2n + 2))\left( \frac{\partial^{2} \varphi_{s, t}}{\partial s \partial t}\right) = 0, 
\end{equation}
where $\langle \cdot, \cdot \rangle_{s, t}$ is the natural Hermitian pairing on complex basic forms induced from the transverse K${\rm \ddot a}$hler metric $g_{\varphi_{s, t}}^{T}$. 
By evaluating this at $(s, t) = (0, 0)$, we obtain 
\begin{align*}
(-\Box_{SE} + (2n + 2))\left( \frac{\partial^{2} \varphi_{s, t}}{\partial s \partial t}\right)|_{s, t = 0} 
&= \left \langle \partial_{B}\bar \partial_{B}\varphi^{\prime}, \partial_{B}\bar \partial_{B}\varphi^{\prime \prime} \right \rangle_{0, 0} \\
&= -\Box_{SE}\langle \partial_{B}\varphi^{\prime}, \partial_{B}\varphi^{\prime \prime} \rangle_{0, 0} + \langle \partial_{B}\Box_{SE}\varphi^{\prime}, \partial_{B}\varphi^{\prime \prime} \rangle_{0, 0}\\
&= (-\Box_{SE} + (2n + 2))\langle \partial_{B}\varphi^{\prime}, \partial_{B}\varphi^{\prime \prime} \rangle_{0, 0}. 
\end{align*}
This shows that 
\begin{align*}
\frac{\partial^{2} \varphi_{s, t}}{\partial s \partial t}|_{s, t = 0} 
&\equiv \langle \partial_{B}\varphi^{\prime}, \partial_{B}\varphi^{\prime \prime} \rangle_{0, 0}\ \text{(mod $\ker(-\Box_{SE} + (2n + 2)) \otimes \mathbb{C}$)} \\
&\equiv \langle \partial_{B}\varphi^{\prime \prime}, \partial_{B}\varphi^{\prime} \rangle_{0, 0}\ \text{(mod $\ker(-\Box_{SE} + (2n + 2)) \otimes \mathbb{C}$)}. 
\end{align*}
Now we can calculate the Hessian of $\iota$; 
\begin{align*}
({\rm Hess}\ \iota)_{g_{SE}}(\varphi^{\prime}, \varphi^{\prime \prime}) 
&= \frac{\partial^{2}}{\partial s \partial t}\left( I_{\eta}(\varphi_{s, t}) - J_{\eta}(\varphi_{s, t}) \right)|_{s, t = 0} \\
&= \frac{\partial}{\partial s}\left\{ \frac{1}{V}\int_{S}\varphi_{s, t}\Box_{s,t}\frac{\partial \varphi_{s, t}}{\partial t}(\frac{1}{2}d\eta_{\varphi_{s, t}})^{n} \wedge \eta_{\varphi_{s, t}} \right\}|_{s, t = 0} \\
&= \frac{\partial}{\partial s}\left\{ \frac{2n + 2}{V}\int_{S}\varphi_{s, t}\frac{\partial \varphi_{s, t}}{\partial t}(\frac{1}{2}d\eta_{\varphi_{s, t}})^{n} \wedge \eta_{\varphi_{s, t}} \right\}|_{s, t = 0}\quad \text{(cf. (\ref{1diff}))} \\
&= \frac{2n + 2}{V}\int_{S} \varphi^{\prime} \varphi^{\prime \prime} \wedge (\frac{1}{2}d\eta_{\varphi})^{n} \wedge \eta_{\varphi} \\
&\quad + \frac{2n + 2}{V}\int_{S} \varphi \frac{\partial^{2} \varphi_{s, t}}{\partial s \partial t} |_{s, t = 0} \wedge (\frac{1}{2}d\eta_{\varphi})^{n} \wedge \eta_{\varphi} \\
&\quad - \frac{2n + 2}{V}\int_{S} \varphi \varphi^{\prime \prime}\Box_{SE}\varphi^{\prime} \wedge (\frac{1}{2}d\eta_{\varphi})^{n} \wedge \eta_{\varphi} \\
&= \frac{2n + 2}{V}\int_{S} \varphi^{\prime} \varphi^{\prime \prime} \wedge (\frac{1}{2}d\eta_{\varphi})^{n} \wedge \eta_{\varphi} \\
&\quad + \frac{2n + 2}{V}\int_{S} \varphi \frac{\langle \partial_{B} \varphi^{\prime}, \partial_{B}\varphi^{\prime \prime} \rangle_{0, 0} + \langle \partial_{B} \varphi^{\prime \prime}, \partial_{B}\varphi^{\prime} \rangle_{0, 0}}{2} \wedge (\frac{1}{2}d\eta_{\varphi})^{n} \wedge \eta_{\varphi} \\
&\quad - \frac{(2n + 2)^{2}}{V}\int_{S} \varphi \varphi^{\prime \prime}\varphi^{\prime} \wedge (\frac{1}{2}d\eta_{\varphi})^{n} \wedge \eta_{\varphi} \\
&= \frac{2n + 2}{V}\int_{S} \left\{ \varphi^{\prime} \varphi^{\prime \prime} 
- \frac{1}{2} \varphi \Box_{SE}(\varphi^{\prime} \varphi^{\prime \prime})  \right\} \wedge (\frac{1}{2}d\eta_{\varphi})^{n} \wedge \eta_{\varphi} \\
&= \frac{2n + 2}{V}\int_{S} \left( 1- \frac{1}{2}\Box_{SE}\varphi
\right) \varphi^{\prime} \varphi^{\prime \prime} (\frac{1}{2}d\eta_{\varphi})^{n} \wedge \eta_{\varphi}. 
\end{align*}
\qed \end{proof}

The following proposition is crucial for our proof of Theorem B. 

\begin{prop}\label{continuation}
For every critical point $g_{SE} \in O$ of $\iota$ with non-degenerate Hessian, $\varphi_{1} := \varphi(g_{SE})$ can be extended to a smooth family $\{ \varphi_{t}\ |\ t \in [1-\varepsilon, 1]\}$ of solutions of (\ref{MA1}) for some $\varepsilon > 0$. 
\end{prop}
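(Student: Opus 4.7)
The plan is to apply a two-stage implicit function theorem at $(\varphi_1, 1)$, using the $G$-orbit of $g_{SE}$ to absorb the non-trivial kernel $K := \ker(\Box_{SE} - (2n+2))$ of the linearization $L := -\Box_{SE} + (2n+2)$ of the operator
$$F(\varphi, t) := \log \frac{\det(g_{i\bar j}^T + \varphi_{i\bar j})}{\det(g_{i\bar j}^T)} + t(2n+2)\varphi - h.$$
Recall that $K$ is canonically identified with $T_{g_{SE}}O$ via the Hamiltonian holomorphic correspondence $\psi \leftrightarrow X_\psi^{\mathbb{R}}$ of Section 4. Decompose $C_B^{k,\alpha}(S) = K \oplus K^\perp$ with respect to the $L^2$-inner product induced by $g_{SE}$, and fix a local slice $V$ of the identity coset in $G/K_{g_{SE}}$, so that $T_e V \cong K$.

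\textbf{First stage ($K^\perp$-direction).} Consider the extended map
$$\Phi(g, \psi, s) := F\!\left(\varphi(g^* g_{SE}) + \psi,\; 1 - s\right), \quad (g, \psi, s) \in V \times (K^\perp \cap C_B^{k,\alpha}) \times (-\varepsilon_0, \varepsilon_0).$$
Variations in the $g$-direction produce tangent vectors to the orbit $O$, all lying in $\ker L$, so $\partial_g \Phi|_{(e, 0, 0)} = 0$; on the other hand $\partial_\psi \Phi|_{(e, 0, 0)} = L|_{K^\perp}$ is an isomorphism of $K^\perp$. The classical implicit function theorem uniquely solves $\pi_{K^\perp}\Phi(g, \psi, s) = 0$ for $\psi = \psi(g, s) \in K^\perp$ smooth with $\psi(e, 0) = 0$. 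Since each $\varphi(g^*g_{SE}) \in \mathscr{H}$ already satisfies $F(\cdot, 1) = 0$, uniqueness of this solve forces $\psi(g, 0) = 0$ for all $g \in V$, so the residual $K$-projection $\Psi(g, s) := \pi_K \Phi(g, \psi(g, s), s)$ vanishes identically on $V \times \{0\}$. We may therefore factor $\Psi(g, s) = s\, \tilde\Psi(g, s)$ with $\tilde\Psi$ smooth, and the remaining task is to solve $\tilde\Psi(g, s) = 0$.

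\textbf{Second stage ($K$-direction).} A direct computation (using that $L$ maps $K^\perp$ into $K^\perp$) gives
$$\tilde\Psi(e, 0) = \partial_s \Psi|_{(e, 0)} = -(2n+2)\pi_K \varphi_1,$$
which vanishes precisely by the critical point condition (Lemma \ref{critical} together with (\ref{kernel}) in Lemma \ref{tangent_space}). The implicit function theorem applied to $\tilde\Psi = 0$ at $(e, 0)$ therefore reduces to establishing invertibility of the linear map
$$A := \partial_g \tilde\Psi|_{(e, 0)} = \partial_g\partial_s\Psi|_{(e, 0)} : T_e V \cong K \longrightarrow K.$$
A computation combining the first $G$-variation of the transverse Laplacian $\Box_{\bar\varphi_g}$, evaluated on the fixed element $\psi_s(e, 0) = (2n+2) L^{-1}\pi_{K^\perp}\varphi_1 \in K^\perp$, with the identity (\ref{Toshiki}) that underlies Lemma \ref{Hessian}, identifies $A$---up to a non-zero multiplicative constant---with the self-adjoint operator $\hat H : K \to K$ corresponding to the bilinear form $({\rm Hess}\, \iota)_{g_{SE}}$ via $L^2(g_{SE})$-duality. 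The non-degeneracy hypothesis on $({\rm Hess}\, \iota)_{g_{SE}}$ therefore supplies invertibility of $A$, and the implicit function theorem yields a smooth curve $g(s) \in V$ with $g(0) = e$ and $\tilde\Psi(g(s), s) = 0$ for $s \in [0, \varepsilon)$. Setting
$$\varphi_{1-s} := \varphi(g(s)^* g_{SE}) + \psi(g(s), s)$$
produces the required $C^{k,\alpha}$-family of solutions of (\ref{MA1}), and standard elliptic bootstrapping (El Kacimi-Alaoui \cite{El}) upgrades it to a smooth family.

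\textbf{Main obstacle.} The technical heart is the identification of $A$ with $\hat H$ in the second stage. This is the analogue of Lemma 6.3 of \cite{BM} in the transverse setting: it hinges on the explicit shape of the first-order variation of $\Box_{\bar\varphi_g}$ along the $G$-orbit acting on $\psi_s(e, 0)$, combined with the basic-form commutation identity (\ref{Toshiki}), and requires careful bookkeeping analogous to the derivation of Lemma \ref{Hessian}.
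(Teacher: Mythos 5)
Your proposal is correct and follows essentially the same route as the paper: a two-stage Lyapunov--Schmidt reduction with respect to $\ker(\Box_{SE}-(2n+2))$, solving the orthogonal complement first by the implicit function theorem, dividing the residual kernel component by $t-1$ using the fact that the $t=1$ slice consists of solutions along the orbit, and reducing everything to the invertibility of the remaining derivative on the kernel, which is identified with $({\rm Hess}\ \iota)_{g_{SE}}$ and hence invertible by the non-degeneracy hypothesis. The computation you flag as the main obstacle is exactly the paper's Lemma \ref{Hessian2}, proved there via the integral identity (\ref{integral}) and the commutation formula (\ref{Toshiki}); the only cosmetic difference is that you parametrize the kernel direction by a slice of the $G$-orbit rather than by $W$ itself, which makes the vanishing of the reduced equation at $t=1$ automatic rather than a consequence of uniqueness in the first-stage solve.
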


\begin{proof}
Put $L_{B}^{2}(S; g_{SE})$ to be the closure of $C_{B}^{\infty}(S)$ in $L^{2}(S; g_{SE})$. 
Let $W := \ker(\Box_{\varphi_{1}} - (2n + 2))$ and $P$ be the orthogonal projection from $L_{B}^{2}(S; g_{SE})$ to $W$. 
Fixing $\alpha \in \mathbb{R}$ with $0 < \alpha < 1$, define $W_{k, \alpha}^{\perp}$ to be the intersection of the orthogonal complement of $W$ and $C_{B}^{k, \alpha}(S)$, for $k = 0, 1, 2, \cdots$. 
Recall that $\varphi_{1}$ belongs to $W_{k, \alpha}^{\perp}$. 
Let $k \geq 2$, and consider the mapping 
\begin{equation*}
\Psi: \mathbb{R} \times \mathscr{H}^{k, \alpha}(S) \to C_{B}^{k-2, \alpha}(S),\ \Psi(t, \varphi) := \log \left( \frac{\det(g^{T}_{i \bar{j}} + \frac{\partial^{2}\varphi}{\partial z^{i}\partial \bar z^{j}})}{\det(g^{T}_{i \bar{j}})} \right) + t(2n+2)\varphi - h. 
\end{equation*}
Note that, by the well-known regularity theorem, any $\varphi \in \mathscr{H}^{k, \alpha}$ satisfying $\Psi(t, \varphi) = 0$ is automatically smooth. 
For each $\varphi \in \mathscr{H}^{k, \alpha}(S)$, we write 
\begin{equation*}
\varphi = \varphi_{1} + \psi + \theta, 
\end{equation*}
where $\psi := P(\varphi - \varphi_{1}) \in W$ and $\theta := (1 - P)(\varphi - \varphi_{1}) \in W_{k, \alpha}^{\perp}$. 
Now the equation 
\begin{equation}\label{MA_last}
\Psi(t, \varphi) = 0
\end{equation}
is written in the form 
\begin{equation*}
P \Psi(t, \varphi_{1} + \psi + \theta) = 0,\quad \Psi_{0}(t, \psi, \theta) = 0, 
\end{equation*}
where $\Psi_{0}$ is defined by 
\begin{equation*}
\Psi_{0}(t, \psi, \theta) := (1-P) \Psi(t, \varphi_{1} + \psi + \theta). 
\end{equation*}
Then clearly $\Psi_{0}(1, 0, 0) = 0$ and the Fr${\rm \acute e}$chet derivative $D_{\theta}\Psi_{0}|_{(1, 0, 0)}$ of $\Psi_{0}$ with respect to $\theta$ at $(t, \psi, \theta) = (1, 0, 0)$ is 
\begin{equation*}
D_{\theta}\Psi_{0}|_{(1, 0, 0)}(\theta^{\prime}) = (-\Box_{SE} + (2n + 2))\theta^{\prime}, 
\end{equation*}
which is invertible. 
Hence by the implicit function theorem we obtain a smooth mapping $U \ni (t, \psi) \mapsto \theta_{t, \psi} \in W_{k, \alpha}^{\perp}$ of a neighborhood $U$ of $(1, 0) \in \mathbb{R} \times W$ to $W_{k, \alpha}^{\perp}$ such that 
\begin{enumerate}
\item $\theta_{1, 0} = 0$, 
\item $\|\theta_{t, \psi} \|_{C^{k, \alpha}} \leq \delta$ on $U$ for some $\delta > 0$ and 
\item $\Psi_{0}(t, \psi, \theta) = 0$ (where $\|\theta \|_{C^{k, \alpha}} \leq \delta$) is, as an equation in $\theta \in C_{B}^{k, \alpha}(S)$, uniquely solvable in the form $\theta = \theta_{t, \psi}$ on $U$. 
\end{enumerate}
By differentiating the identity $\Psi_{0}(t, \psi, \theta_{t, \psi}) = 0$ at $(1, 0)$ we obtain 
\begin{align}
&(-\Box_{SE} + (2n + 2)) \left( \frac{\partial}{\partial t}\theta_{t, \psi} |_{(1, 0)} \right) = -(2n + 2) \varphi_{1}, \label{identity1} \\
&(D_{\psi}\theta_{t, \psi})|_{(1, 0)}(\psi^{\prime}) = 0\quad \text{for all $\psi^{\prime} \in W$}. \label{identity2}
\end{align}
Then the equation (\ref{MA_last}), on a small neighborhood of $\varphi_{1}$, reduces to 
\begin{equation*}
\Psi_{1}(t, \psi) = 0, 
\end{equation*}
where $\Psi_{1}(t, \psi) := P\Psi(t, \varphi_{1} + \psi + \theta_{t, \psi})$ for $(t, \psi) \in U$. 
Recall that $\Psi(1, \varphi) = 0$ for all $\varphi \in O$. 
Hence $\Psi_{1} = 0$ on $\{ t = 1 \}$ and therefore the mapping 
\begin{equation*}
U|_{t \neq 1} \ni (t, \psi) \mapsto \Psi_{2}(t, \psi) := \frac{\Psi_{1}(t, \psi)}{t - 1}
\end{equation*}
naturally extends to a smooth map on $U$ to $W$ (denoted by the same $\Psi_{2}$). 
Note that, for $t = 1$, we have 
\begin{equation*}
\Psi_{2}(1, 0) = \frac{\partial \Psi_{1}(t, 0)}{\partial t}|_{t=1} = 0. 
\end{equation*}
Hence, if the Fr${\rm \acute e}$chet derivative $D_{\psi} \Psi_{2}|_{(1, 0)}$ is invertible, we obtain the desired result. 
The Fr${\rm \acute e}$chet derivative $D_{\psi} \Psi_{2}|_{(1, 0)}$ is written in the following form, whose proof is given later.  
\begin{lem}\label{Hessian2}
For each $\psi^{\prime}, \psi^{\prime \prime} \in W$, 
\begin{align*}
\int_{S} D_{\psi}\Psi_{2} |_{(1, 0)}(\psi^{\prime}) \cdot \psi^{\prime \prime} ( \frac{1}{2}d\eta_{1})^{n} \wedge \eta_{1} 
&= (2n + 2)\int_{S}\left( 1 - \frac{1}{2}\Box_{SE} \right)\psi^{\prime} \psi^{\prime \prime}(\frac{1}{2}d\eta_{1})^{n} \wedge \eta_{1} \\
&= V({\rm Hess}\ \iota)_{g_{SE}}(\psi^{\prime}, \psi^{\prime \prime}). 
\end{align*}
\end{lem}
Then by this lemma, $D_{\psi} \Psi_{2}|_{(1, 0)}$ is invertible. 
Hence the implicit function theorem shows that the equation $\Psi_{2}(t, \psi) = 0$ in $\psi$ is uniquely solvable in a neighborhood of $(1, 0)$ to produce a smooth curve $\{\psi(t)\ |\ t \in (1-\varepsilon, 1] \}$ in $\ker(\Box_{SE} - (2n + 2))$ such that $\psi(1) = 0$ and $\Psi_{2}(t, \psi(t)) = 0$. 
Therefore, we have $\Psi(t, \varphi_{1} + \psi(t) + \theta_{t, \psi(t)}) = 0$ for $t \in (1-\varepsilon, 1]$ and hence $\{\varphi_{1} + \psi(t) + \theta_{t, \psi(t)}\ |\ t \in (1-\varepsilon, 1]\}$ is a one parameter family of solutions of (\ref{MA1}). 

Finally, we shall prove Lemma \ref{Hessian2}. 
First we shall show the following formula;
\begin{align}\label{integral}
&-\int_{S}\varphi^{\prime} \langle \partial_{B} \bar \partial_{B} \psi, \partial_{B} \bar \partial_{B} \varphi^{\prime \prime} \rangle_{\varphi_{1}} (\frac{1}{2}d\eta_{1})^{n} \wedge \eta_{1} \\
&\quad = \int_{S}(2n + 2)\varphi^{\prime} \varphi^{\prime \prime} - \langle \partial_{B} \varphi^{\prime}, \partial_{B} \varphi^{\prime \prime} \rangle_{\varphi_{1}} ((-\Box_{\varphi_{1}} + (2n + 2))\psi) (\frac{1}{2}d\eta_{1})^{n} \wedge \eta_{1} \notag
\end{align}
 for each $\varphi^{\prime}, \varphi^{\prime \prime} \in W$ and $\psi \in C_{B}^{\infty}(S)$. 
For (\ref{integral}), put $\zeta := (-\Box_{\varphi_{1}} + (2n + 2))\psi$. 
Then we have 
\begin{align*}
&\int_{S}(2n + 2) \varphi^{\prime} \varphi^{\prime \prime} - \langle \partial_{B} \varphi^{\prime}, \partial_{B} \varphi^{\prime \prime} \rangle_{\varphi_{1}} \zeta (\frac{1}{2}d\eta_{1})^{n} \wedge \eta_{1} \\
&\quad = -\sqrt{-1}\int_{S}n \zeta (\varphi^{\prime} \partial_{B} \bar \partial_{B} \varphi^{\prime \prime} + \partial_{B} \varphi^{\prime} \wedge \bar \partial_{B} \varphi^{\prime \prime}) \wedge (\frac{1}{2}d\eta_{1})^{n-1} \wedge \eta_{1} \\
&\quad = -\sqrt{-1}\int_{S}n\zeta \partial_{B}(\varphi^{\prime} \bar \partial_{B} \varphi^{\prime \prime}) \wedge (\frac{1}{2}d\eta_{1})^{n-1} \wedge \eta_{1} \\
&\quad = \sqrt{-1}\int_{S}n\varphi^{\prime}\partial_{B}\zeta \wedge \bar \partial_{B} \varphi^{\prime \prime} \wedge (\frac{1}{2}d\eta_{1})^{n-1} \wedge \eta_{1} \\
&\quad = \int_{S}\varphi^{\prime} \langle \partial_{B}\zeta, \partial_{B} \varphi^{\prime \prime} \rangle_{\varphi_{1}} (\frac{1}{2}d\eta_{1})^{n} \wedge \eta_{1} \\
&\quad = \int_{S}\varphi^{\prime} \langle \partial_{B}(-\Box_{\varphi_{1}} + (2n + 2))\psi, \partial_{B} \varphi^{\prime \prime} \rangle_{\varphi_{1}} (\frac{1}{2}d\eta_{1})^{n} \wedge \eta_{1} \\
&\quad = \int_{S}\varphi^{\prime} \left\{  -\Box_{\varphi_{1}}\langle \partial_{B}\psi, \partial_{B} \varphi^{\prime \prime} \rangle_{\varphi_{1}} + (2n + 2)\langle \partial_{B} \psi, \partial_{B} \varphi^{\prime \prime} \rangle_{\varphi_{1}} \right\}(\frac{1}{2}d\eta_{1})^{n} \wedge \eta_{1} \\
&\quad \quad -\int_{S}\varphi^{\prime}\langle \partial_{B} \bar \partial_{B} \psi, \partial_{B} \bar \partial_{B} \varphi^{\prime \prime} \rangle_{\varphi_{1}} (\frac{1}{2}d\eta_{1})^{n} \wedge \eta_{1} \\
&\quad = -\int_{S}\varphi^{\prime}\langle \partial_{B} \bar \partial_{B} \psi, \partial_{B} \bar \partial_{B} \varphi^{\prime \prime} \rangle_{\varphi_{1}} (\frac{1}{2}d\eta_{1})^{n} \wedge \eta_{1}. 
\end{align*}
This shows (\ref{integral}). 
Then, by (\ref{identity2}) we have 
\begin{align*}
D_{\psi}\Psi_{2}|_{(1, 0)}(\psi^{\prime}) 
&= D_{\psi} \frac{\partial \Psi_{1}}{\partial t}|_{(1, 0)}(\psi^{\prime}) \\
&= (2n + 2) \psi^{\prime} - P \left \langle \partial_{B} \bar \partial_{B} \left( \frac{\partial \theta_{t, \psi}}{\partial t} |_{(1, 0)}\right), \partial_{B} \bar \partial_{B}\psi^{\prime} \right \rangle_{\varphi_{1}} 
\end{align*}
for each $\psi^{\prime} \in W$. 
Hence, it follows that 
\begin{align*}
&\int_{S} D_{\psi}\Psi_{2} |_{(1, 0)}(\psi^{\prime}) \cdot \psi^{\prime \prime} ( \frac{1}{2}d\eta_{1})^{n} \wedge \eta_{1} \\
&\quad= \int_{S}\left\{ (2n + 2) \psi^{\prime}\psi^{\prime \prime} - \psi^{\prime \prime} \left \langle \partial_{B} \bar \partial_{B} \left( \frac{\partial \theta_{t, \psi}}{\partial t} |_{(1, 0)}\right), \partial_{B} \bar \partial_{B}\psi^{\prime} \right \rangle_{\varphi_{1}} \right\}(\frac{1}{2}d\eta_{1})^{n} \wedge \eta_{1} \\
&\quad= (2n + 2)\int_{S}\left\{ \psi^{\prime}\psi^{\prime \prime} - ( (2n + 2)\psi^{\prime}\psi^{\prime \prime} - \langle \partial_{B}\psi^{\prime}, \partial_{B}\psi^{\prime \prime} \rangle_{\varphi_{1}} )\varphi \right\}(\frac{1}{2}d\eta_{1})^{n} \wedge \eta_{1} \\
&\quad= (2n + 2)\int_{S}\left( 1 - \frac{1}{2}\Box_{SE} \varphi \right)\psi^{\prime}\psi^{\prime \prime}(\frac{1}{2}d\eta_{1})^{n} \wedge \eta_{1} \\
&\quad = V({\rm Hess}\ \iota)_{g_{SE}}(\psi^{\prime}, \psi^{\prime \prime}). 
\end{align*}
This proves the lemma. 
\qed \end{proof}

\begin{rem}\label{gauge}
Fix a $G$-orbit $O$ in $\mathscr{E}$ arbitrary and take a minimizer $g_{SE}$ of $\iota : O \to \mathbb{R}$. 
Then $g_{SE}$ is a critical point of $\iota$ and the Hessian is automatically positive semi-definite. 
We shall realize a critical point for $\iota$ with positive definite Hessian by a small change of the initial metric $g$. 

For sufficient small $\delta \in (0, 1)$, define $g^{\delta} := g_{\delta\varphi_{1}}$, where $\varphi_{1} = \varphi(g_{SE})$. 
The associated transverse K${\rm \ddot a}$hler form is given by $(\omega^{\delta})^{T} = (1-\delta)\omega^{T} + \delta \omega_{1}^{T} = \omega^{T} + \delta \sqrt{-1}\partial_{B}\bar \partial_{B}\varphi_{1}$. 
When the role of the initial metric $g$ is played by the new Sasaki metric $g^{\delta}$, the Sasaki-Einstein metric $g_{SE} = g_{\varphi_{1}}$ corresponds to $g_{\varphi_{1}^{\delta}}^{\delta}$ for a basic function 
\begin{equation*}
\varphi_{1}^{\delta} = (1 - \delta)\varphi_{1} + C_{\delta}, 
\end{equation*}
where $C_{\delta}$ is a constant. 
Then $g_{SE}$ is a critical point of $\iota^{\delta}$ with positive definite Hessian, where $\iota^{\delta}$ denotes the one corresponding to $\iota$. 
Indeed, for each $\psi \in \ker(\Box_{g_{SE}} - (2n + 2))$ we have 
\begin{equation*}
\int_{S}\psi \varphi_{1}^{\delta} (\frac{1}{2}d\eta_{SE})^{n} \wedge \eta_{SE} = (1-\delta) \int_{S}\psi \varphi_{1}(\frac{1}{2}d\eta_{SE})^{n} \wedge \eta_{SE} = 0 
\end{equation*}
and hence $\varphi_{1}^{\delta}$ is a critical point of $\iota^{\delta}$ by Lemma \ref{critical}. 
Moreover, by Lemma \ref{Hessian} we have 
\begin{align*}
({\rm Hess}\ \iota^{\delta})_{g_{SE}}(\psi^{\prime}, \psi^{\prime \prime}) 
&= \frac{2n + 2}{V}\int_{S}\left( 1 - \frac{1}{2}\Box_{SE}\varphi_{1}^{\delta} \right)\psi^{\prime} \psi^{\prime \prime}(\frac{1}{2}d\eta_{SE})^{n} \wedge \eta_{SE} \\
&= (1 - \delta)({\rm Hess}\ \iota)_{g_{SE}}(\psi^{\prime}, \psi^{\prime \prime}) \\
&\quad + \delta \frac{2n + 2}{V}\int_{S}\psi^{\prime} \psi^{\prime \prime}(\frac{1}{2}d\eta_{SE})^{n} \wedge \eta_{SE}, 
\end{align*}
where $\Box_{SE}$ is the basic complex Laplacian with respect to the Sasaki metric $g_{SE}$. 
This shows that $({\rm Hess}\ \iota^{\delta})_{g_{SE}}$ is positive definite. 
Hence by the argument in the last subsection and Proposition \ref{continuation}, $\varphi_{1}^{\delta}$ can be uniquely extended to a smooth family $\{ \varphi_{t}^{\delta}\ |\ t \in [1-\varepsilon, 1] \}$ of the equation (\ref{MA1}) with respect to the initial metric $g^{\delta}$. 
\end{rem}

\subsection{Proof of Theorem B}
Let $O^{\prime}$ and $O^{\prime \prime}$ be arbitrary $G$-orbits in $\mathscr{E}$. 
Then by the argument of Remark \ref{gauge}, for a suitable choice of the initial metric $g_{0}$, the function $\iota^{\prime}: O^{\prime} \ni g^{\prime} \mapsto I(g_{0}, g^{\prime}) - J(g_{0}, g^{\prime}) \in \mathbb{R}$ has a critical point $g_{SE}^{\prime} \in O^{\prime}$ with positive definite Hessian. 
Recall that the function $\iota^{\prime \prime}: O^{\prime \prime} \ni g^{\prime \prime} \mapsto I(g_{0}, g^{\prime \prime}) - J(g_{0}, g^{\prime \prime}) \in \mathbb{R}$ takes its minimum at some point $g_{SE}^{\prime \prime} \in O^{\prime \prime}$. 
We now put $g_{0}^{\delta} := (1 - \delta) g_{0} + \delta g_{SE}^{\prime \prime}$ for $\delta \in [0, 1]$. 
Again by the argument of Remark \ref{gauge} applied to $O^{\prime \prime}$, $g_{SE}^{\prime \prime}$ is a critical point of $(\iota^{\prime \prime})^{\delta}$ with positive definite Hessian whenever $\delta \in (0, 1]$. 
Hence by Proposition \ref{continuation}, $\varphi_{1}^{\prime \prime} := \varphi(g_{SE}^{\prime \prime})$ (with respect to the initial metric $g_{0}^{\delta}$) can be extended uniquely to a smooth family $\{\varphi_{t}^{\prime \prime}\ |\ t \in (1-\varepsilon, 1] \}$ of (\ref{MA1}) with respect to the initial metric $g_{0}^{\delta}$. 

We finally consider the functional $\iota_{\delta}^{\prime}: O^{\prime} \ni g^{\prime} \mapsto I(g_{0}^{\delta}, g^{\prime}) - J(g_{0}^{\delta}, g^{\prime}) \in \mathbb{R}$. 
Note that $\iota_{\delta}^{\prime}$ converges to $\iota^{\prime}$ as $\delta$ tends to $0$. 
Then for a sufficiently small $\delta > 0$, $g_{SE}^{\prime}$ is a critical point of $\iota_{\delta}^{\prime}$ with positive definite Hessian. 
Hence $\varphi_{1}^{\prime} := \varphi(g_{SE}^{\prime})$ (with respect to the initial metric $g_{0}^{\delta}$) can be extended uniquely to a smooth family $\{\varphi_{t}^{\prime}\ |\ t \in (1-\varepsilon, 1] \}$ of (\ref{MA1}) with respect to the initial metric $g_{0}^{\delta}$. 
By Theorem \ref{sol} and the equivalence between the equations (\ref{MA1}) and (\ref{MA2}) for $t \in (0, 1]$, we conclude $g^{\prime} = g^{\prime \prime}$. 
Thus, $O^{\prime} = O^{\prime \prime}$ and the proof is now complete. 

\section{Concluding remarks}
Theorem A plays a central role to obtain a priori $C^{0}$-estimate for solutions of the equation (\ref{MA2}) (cf. Proposition \ref{osc}). 
We remark that a similar estimate can be obtained without the diameter bound in the following way. 
Let $(S, g)$ be a $(2n + 1)$-dimensional Sasaki manifold with Sasakian structure $\mathcal{S} = \{g, \xi, \eta, \Phi \}$. 
Consider a solution $\varphi_{t} \in \mathscr{H}$ of the equation (\ref{MA2}) at $t$. 
Note that, as shown in Remark \ref{remark_positive}, $\varphi_{t}$ satisfies 
\begin{equation*}
\rho^{T}_{\varphi_{t}} \geq t(2n + 2)\omega^{T}_{\varphi_{t}}. 
\end{equation*}
We introduce a family of contact structures by multiplication of positive constant $\mu$, 
\begin{align}
\eta_{\varphi_{t}, \mu} &= \mu^{-1}\eta_{\varphi_{t}}, \label{D_contact}\\
\xi_{\mu} &= \mu \xi. \label{D_Reeb}
\end{align}
Then we see that $(\eta_{\varphi_{t}, \mu}, \xi_{\mu})$ gives a Sasakian structure with the metric $g_{\varphi_{t}, \mu}$ on $S$. 
The transversal metric $g^{T}_{\varphi_{t}, \mu}$ is given by $g^{T}_{\varphi_{t}, \mu} = \mu^{-1}g^{T}_{\varphi_{t}}$, and the volume form of $g_{\varphi_{t}, \mu}$ is given by 
\begin{equation}\label{volume}
\eta_{\varphi, \mu} \wedge (d\eta_{\varphi_{t}, \mu})^{n} = \mu^{-(n+1)}\eta_{\varphi_{t}} \wedge (d\eta_{\varphi_{t}})^{n}. 
\end{equation}
Let $\Delta_{\varphi_{t}, \mu}$ be the Laplacian with the Green function $G_{\varphi_{t}, \mu}$ and $Ric_{\varphi_{t}, \mu}$ the Ricci tensor with respect to $g_{\varphi_{t}, \mu}$. 
The following is a well-known fact on the Green function of compact Riemannian manifolds. 
\begin{fact}[\cite{BM}]\label{Gallot}
Let $(S, g)$ be a $(2n + 1)$-dimensional compact Riemannian manifold with the Green function $G(p, q)$. 
We assume 
\begin{equation*}
{\rm diam}(S, g)^{2} Ric \geq -\varepsilon^{2}g 
\end{equation*}
for a constant $\varepsilon \geq 0$. 
Then there exists a constant $\gamma(n, \varepsilon) > 0$ which depends on only $m$ and $\varepsilon$ and we have 
\begin{equation*}
G(p, q) \geq -\gamma(n, \varepsilon) \frac{{\rm diam}(S, g)^{2}}{{\rm Vol}(S, g)}
\end{equation*}
for the Green function of $(S, g)$. 
\end{fact}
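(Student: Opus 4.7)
The plan is to normalize by scaling and then estimate the Green function via the heat semigroup. First I would observe that the claimed inequality is invariant under the homothety $g \mapsto \lambda g$ on a $(2n+1)$-manifold: ${\rm diam}^{2}$ scales by $\lambda$, the volume by $\lambda^{(2n+1)/2}$, and the Green function (normalized so that $\int_{S} G(p,\cdot)\, dV = 0$) scales by $\lambda^{1-(2n+1)/2}$, since $\Delta_{\lambda g} = \lambda^{-1}\Delta_{g}$. Consequently the quantity $G(p,q)\cdot {\rm Vol}(S,g)/{\rm diam}(S,g)^{2}$ is scale-invariant, and we may rescale so that ${\rm diam}(S,g) = 1$; the hypothesis then reads ${\rm Ric} \geq -\varepsilon^{2} g$, and the conclusion reduces to the uniform estimate $G(p,q) \geq -\gamma(n,\varepsilon)/{\rm Vol}(S,g)$.

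Second, I would represent the Green function in terms of the heat kernel $p_{t}(x,y)$ by
\[
G(p,q) = \int_{0}^{\infty} \left( p_{t}(p,q) - \frac{1}{{\rm Vol}(S,g)}\right) dt,
\]
and bound this integral separately on a short-time interval $t \leq t_{0}$ and a long-time interval $t \geq t_{0}$, where $t_{0} = t_{0}(n,\varepsilon)$ is to be chosen. Positivity $p_{t} \geq 0$ immediately yields the short-time lower bound $-t_{0}/{\rm Vol}(S,g)$. For the long-time part, the normalized Ricci bound and Bishop--Gromov volume comparison provide (i) a uniform local Sobolev inequality and, by Moser iteration, an on-diagonal estimate $p_{t_{0}}(x,x) \leq C_{1}(n,\varepsilon)/{\rm Vol}(S,g)$, and (ii) a uniform spectral gap $\lambda_{1} \geq \lambda_{1}(n,\varepsilon) > 0$ for $-\Delta$ (for instance via Li--Yau type eigenvalue estimates). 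Spectral decomposition then yields
\[
\left| p_{t}(p,q) - \frac{1}{{\rm Vol}(S,g)}\right| \leq \frac{C_{2}(n,\varepsilon)}{{\rm Vol}(S,g)}\, e^{-\lambda_{1}(t - t_{0})} \quad \text{for } t \geq t_{0},
\]
whose integral over $[t_{0},\infty)$ contributes at most $C_{2}/(\lambda_{1} {\rm Vol}(S,g))$. Summing the two regimes produces the constant $\gamma(n,\varepsilon)$.

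The main obstacle is dependence of constants: after the diameter normalization, only the Ricci lower bound is available, so every auxiliary estimate (Sobolev constant, Moser exponent, on-diagonal heat kernel bound, eigenvalue gap) must be derived from this single piece of geometric data together with Bishop--Gromov, with no appeal to injectivity radius or sectional curvature bounds. A subsidiary technical point is that the heat kernel representation of $G$ requires the convention $\int_{S}G(p,\cdot)\,dV = 0$; this must be fixed at the outset in order for the rescaling computation to give the scale invariance used in the first step.
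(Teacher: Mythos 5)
The paper does not prove this statement at all: it is labelled a \emph{Fact} and quoted verbatim from Bando--Mabuchi \cite{BM} (their Theorem 3.2), so there is no internal proof to compare against. Your outline is, however, a correct reconstruction of the standard argument behind that result, and it is essentially the route taken in the cited literature. The scaling computation is right (with $m=2n+1$, the quantity $G\cdot{\rm Vol}/{\rm diam}^{2}$ is indeed invariant under $g\mapsto\lambda g$ once one fixes the normalization $\int_{S}G(p,\cdot)\,dV=0$, which is also the normalization needed for the representation formula the paper uses in Proposition 4.7), the heat-kernel representation $G(p,q)=\int_{0}^{\infty}\bigl(p_{t}(p,q)-{\rm Vol}^{-1}\bigr)\,dt$ is the correct one for that normalization, and the split into $t\le t_{0}$ (positivity of $p_{t}$) and $t\ge t_{0}$ (spectral gap plus on-diagonal bound, combined via the semigroup property and Cauchy--Schwarz) is exactly how one produces a constant depending only on $n$ and $\varepsilon$. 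The two inputs you defer to the literature are available under precisely your hypotheses: after normalizing ${\rm diam}=1$ one has ${\rm Ric}\ge-\varepsilon^{2}g$, and then Li--Yau gives both $\lambda_{1}\ge c(n,\varepsilon)>0$ and $p_{1}(x,x)\le C(n,\varepsilon)/{\rm Vol}(B(x,1))=C(n,\varepsilon)/{\rm Vol}(S)$; alternatively the uniform Sobolev inequality you mention is Gallot's isoperimetric inequality, which is the ingredient \cite{BM} actually invokes. You correctly identify that this Sobolev/heat-kernel input is the only genuinely deep step and that it must be derived from the Ricci and diameter bounds alone; since that is a known theorem, your sketch has no gap, though as written it is an outline resting on those cited results rather than a self-contained proof. (Incidentally, the statement in the paper contains a typo: the constant depends on $n$, not ``$m$''.)
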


Fact \ref{Gallot} has the following implication on the volume and the diameter of $(S, g_{\varphi_{t}, \mu})$. 

\begin{prop}\label{vol_diam}
Let $(S, g)$ be a $(2n + 1)$-compact Sasakian manifold and $\varphi_{t}$ a solution of (\ref{MA2}) at $t$. 
If we set $\mu = t^{-1}$, then we have estimates of the volume and the diameter with respect to the metric $g_{\varphi_{t}, \mu}$, 
\begin{align*}
{\rm Vol}(S, g_{\varphi_{t}, \mu}) &= t^{n+1}V_{0}, \\
{\rm diam}(S, g_{\varphi_{t}, \mu}) &\leq \pi, 
\end{align*}
where $V_{0} = \int_{S}\frac{1}{n!}(d\eta)^{n} \wedge \eta$. 
\end{prop}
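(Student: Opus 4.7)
The plan is to reduce each of the two claims to a direct bookkeeping of the D-homothetic scaling formulas (\ref{D_contact})--(\ref{volume}) and then to invoke a standard curvature identity together with the classical Myers theorem.

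For the volume, the Riemannian volume form of any Sasakian structure equals $\tfrac{1}{n!}(\tfrac{1}{2}d\eta)^{n}\wedge\eta$, as recorded in Section 2.1. Combining this with the scaling (\ref{volume}) immediately gives $dV_{g_{\varphi_{t},\mu}} = \mu^{-(n+1)}\,dV_{g_{\varphi_{t}}}$. Next I would use that the integral $\int_{S}(\tfrac{1}{2}d\eta_{\varphi_{t}})^{n}\wedge\eta_{\varphi_{t}}$ depends only on the basic K\"ahler class $[\tfrac{1}{2}d\eta]_{B}$; this invariance is implicit in the fact that the normalizing constant $V$ used in the definitions of $L_{\eta}, M_{\eta}, I_{\eta}, J_{\eta}$ in Section 4.1 does not depend on the representative. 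Setting $\mu = t^{-1}$ then yields $\mathrm{Vol}(S, g_{\varphi_{t},\mu}) = t^{n+1}V_{0}$.

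For the diameter bound I would proceed in two stages. First, since the transverse Ricci form $\rho^{T} = -\sqrt{-1}\,\partial_{B}\bar{\partial}_{B}\log\det(g^{T}_{i\bar{j}})$ is manifestly unchanged by rescaling $g^{T}$ by a positive constant, the relation $g^{T}_{\varphi_{t},\mu} = \mu^{-1}g^{T}_{\varphi_{t}}$ implies $\mathrm{Ric}^{T}_{\varphi_{t},\mu} = \mathrm{Ric}^{T}_{\varphi_{t}}$ as $(0,2)$-tensors. Remark \ref{remark_positive} provides $\mathrm{Ric}^{T}_{\varphi_{t}} \geq t(2n+2)g^{T}_{\varphi_{t}}$, and rewriting the right-hand side in terms of $g^{T}_{\varphi_{t},\mu}$ gives $\mathrm{Ric}^{T}_{\varphi_{t},\mu} \geq t(2n+2)\mu\,g^{T}_{\varphi_{t},\mu}$, which for $\mu = t^{-1}$ reduces to $\mathrm{Ric}^{T}_{\varphi_{t},\mu} \geq (2n+2)g^{T}_{\varphi_{t},\mu}$.

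The second stage promotes this transverse bound to a bound on the full Riemannian Ricci tensor of $g_{\varphi_{t},\mu}$. The Sasaki identity $\mathrm{Ric}^{T}(X,Y) = \mathrm{Ric}(X,Y) + 2g(X,Y)$ for sections $X,Y$ of the contact distribution (Section 2.2) applied to the Sasakian structure $(g_{\varphi_{t},\mu},\xi_{\mu},\eta_{\varphi_{t},\mu},\Phi)$ gives $\mathrm{Ric}_{\varphi_{t},\mu}(X,Y) \geq 2n\,g_{\varphi_{t},\mu}(X,Y)$ on the contact distribution. In the Reeb direction the standard Sasaki identities, which are direct consequences of $R(X,\xi)Y = g(\xi,Y)X - g(X,Y)\xi$, yield $\mathrm{Ric}_{\varphi_{t},\mu}(\xi_{\mu},\xi_{\mu}) = 2n = 2n\,g_{\varphi_{t},\mu}(\xi_{\mu},\xi_{\mu})$ and $\mathrm{Ric}_{\varphi_{t},\mu}(\xi_{\mu},X) = 0$ for $X$ in the contact distribution. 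Thus $\mathrm{Ric}_{\varphi_{t},\mu} \geq 2n\,g_{\varphi_{t},\mu}$ on all of $TS$, and the classical Bonnet--Myers theorem applied in dimension $2n+1$ delivers $\mathrm{diam}(S,g_{\varphi_{t},\mu}) \leq \pi$.

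The main subtlety, rather than a serious obstacle, is making sure the various scaling factors for $d\eta$, $\eta$, $g^{T}$, and $\xi$ line up correctly; the choice $\mu = t^{-1}$ is forced precisely so that the constant $t$ appearing in Remark \ref{remark_positive} is absorbed. It is worth noting that a direct application of Theorem A with $\tau = 2n+2$ would give only $\mathrm{diam} \leq 2\pi\sqrt{(2n-1)/(2n+2)}$, strictly weaker than $\pi$ for $n \geq 2$; the sharp constant $\pi$ here is made possible by the $+2g$ shift in the transverse-to-full Ricci relation, which lets the classical Myers theorem on $(2n+1)$-manifolds be invoked.
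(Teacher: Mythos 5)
Your proposal is correct and follows essentially the same route as the paper: the volume claim by the scaling identity (\ref{volume}) together with invariance of the total volume within the basic K\"ahler class, and the diameter claim by noting $Ric^{T}$ is scale-invariant, converting $Ric^{T}_{\varphi_{t}}\geq t(2n+2)g^{T}_{\varphi_{t}}$ into $Ric_{\varphi_{t},\mu}\geq 2n\,g_{\varphi_{t},\mu}$ on $D$ via the $O$'Neill-type identity $Ric^{T}=Ric+2g$, handling the Reeb direction by the standard Sasaki identity, and applying classical Myers. Your closing observation that the choice $\mu=t^{-1}$ is exactly what absorbs the factor $t$, and that this yields a sharper constant than Theorem A would, is a fair gloss but not needed for the argument.
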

\begin{proof}
Since $\mu = t^{-1}$, we have 
\begin{align*}
{\rm Vol}(S, g_{\varphi_{t}, \mu}) 
&= \int_{S}\frac{1}{n!}(d\eta_{\varphi_{t}, \mu})^{n} \wedge \eta_{\varphi_{t}, \mu} = \mu^{-(n+1)} \int_{S} \frac{1}{n!}(d\eta_{\varphi_{t}})^{n} \wedge \eta_{\varphi_{t}} \\
&= t^{n+1} \int_{S}\frac{1}{n!}(d\eta)^{n} \wedge \eta \\
&= t^{n+1}V_{0}. 
\end{align*}
Furthermore, we have 
\begin{equation*}
Ric_{\varphi_{t}, \mu}(X, Y) = Ric^{T}_{\varphi_{t}, \mu}(X, Y) - 2g_{\varphi_{t}, \mu}(X, Y) 
\end{equation*}
and 
\begin{equation*}
\mu g_{\varphi_{t}, \mu}(X, Y) = g_{\varphi_{t}}(X, Y) 
\end{equation*}
for all $X, Y \in \ker \eta_{\varphi_{t}, \mu}$. 
Since the transversal Ricci curvature is invariant under the multiplication by positive constant of a transversal metric, thus $Ric^{T}_{\varphi_{t}, \mu} = Ric^{T}_{\varphi_{t}}$, for all $X, Y \in \ker \eta_{\varphi_{t}, \mu}$. 
Then we have 
\begin{align*}
Ric^{T}_{\varphi_{t}, \mu}(X, Y) 
&= Ric^{T}_{\varphi_{t}}(X, Y) \\
&\geq t(2n + 2)g^{T}_{\varphi_{t}}(X, Y) \\
&= t(2n + 2) \mu g^{T}_{\varphi_{t}, \mu}(X, Y) \\
&= t(2n + 2) \mu g_{\varphi_{t}, \mu}(X, Y). 
\end{align*}
Therefore we have
\begin{equation*}
Ric_{\varphi_{t}, \mu}(X, Y) \geq t(2n + 2) \mu g_{\varphi_{t}, \mu}(X, Y) - 2g_{\varphi_{t}, \mu}(X, Y). 
\end{equation*}
Since $\mu = t^{-1}$, we have $Ric_{\varphi_{t}, \mu}(X, Y) \geq 2n g_{\varphi_{t}, \mu}(X, Y)$. 
It follows that 
\begin{align*}
Ric_{\varphi_{t}, \mu}(X, \xi_{\mu}) 
&= 2n\eta_{\varphi_{t}, \mu}(X) \\
&= 2ng_{\varphi_{t}, \mu}(X, \xi_{\mu}) 
\end{align*} 
for all $X \in TS$. 
Therefore we obtain 
\begin{equation*}
Ric_{\varphi_{t}, \mu} \geq 2ng_{\varphi_{t}, \mu}. 
\end{equation*}
Finally, we have ${\rm diam}(S, g_{\varphi_{t}, \mu}) \leq \pi$ by Myers' theorem. 
\end{proof}

Now we consider about the oscillation ${\rm osc}_{S}\varphi_{t}$ of $\varphi_{t}$. 
As shown in Proposition \ref{osc}, we have 
\begin{equation*}
\sup_{S}\varphi_{t} \leq \frac{1}{V}\int_{S}\varphi_{t}(d\eta)^{n} \wedge \eta - 2n K V_{0}, 
\end{equation*}
where $-K$ is the infimum of the Green function $G$ with respect to the metric $g$. 
We shall give an estimate for the infimum of $\varphi_{t}$. 
Let $\Delta_{t,\mu}$ be the Laplacian and $G_{t,\mu}$ the Green function with respect to the Sasaki metric $g_{\varphi_{t}, \mu}$. By Proposition \ref{vol_diam} and Fact \ref{Gallot}, we have 
\begin{equation}\label{sekiya_green}
G_{t, \mu} \geq -\gamma(n, 0)\frac{\pi^{2}}{t^{n+1}V}, 
\end{equation}
where $\mu = t^{-1}$ as in Proposition \ref{vol_diam}. 
We denote by $\Box_{t, \mu}$ the basic complex Laplacian with respect to the transversal K${\rm \ddot a}$hler form $\frac{1}{2}d\eta_{\varphi_{t}, \mu}$. 
Then it follows that $\Delta_{t,\mu}\varphi_{t} = 2\Box_{t, \mu}\varphi_{t}$. 
By $d\eta_{\varphi_{t}, \mu} = \mu^{-1}(d\eta + \sqrt{-1}\partial_{B} \bar{\partial}_{B} \varphi_{t})$, we have 
\begin{equation}\label{sekiya_laplacian}
\Box_{t,\mu}\varphi_{t} = \mu \Box_{t, \mu}\mu^{-1}\varphi_{t} = \mu {\rm tr}_{d\eta_{\varphi_{t}, \mu}}(d\eta_{\mu} - d\eta_{\varphi_{t}, \mu}) \geq -nt^{-1}, 
\end{equation}
where $\eta_{\mu} = \mu^{-1}\eta$ and $\mu = t^{-1}$. 
By applying the Fact \ref{Gallot} to $(S, g_{\varphi_{t}, \mu})$, we have 
\begin{align*}
\varphi_{t}(p) 
&= \frac{1}{t^{n+1}V}\int_{S}\varphi_{t}(d\eta_{\varphi_{t}, \mu})^{n} \wedge \eta_{\varphi_{t}, \mu} \\
&\quad + \int_{S} \left( G_{t, \mu}(p, q) + \gamma(n, 0)\frac{\pi^{2}}{t^{n+1}V} \right)(\Delta_{t, \mu}\varphi_{t})dV_{g_{\varphi_{t}, \mu}}(q). 
\end{align*}
By (\ref{volume}), the first term is given by 
\begin{align*}
\frac{1}{t^{n+1}V}\int_{S}\varphi_{t}(d\eta_{\varphi_{t}, \mu})^{n} \wedge \eta_{\varphi_{t}, \mu} 
= \frac{1}{V}\int_{S}\varphi_{t}(d\eta_{\varphi_{t}})^{n} \wedge \eta. 
\end{align*}
By using (\ref{sekiya_green}) and (\ref{sekiya_laplacian}), we have 
\begin{align*}
&\quad \int_{S} \left( G_{t, \mu}(p, q) + \gamma(n, 0) \frac{\pi^{2}}{t^{n+1}V} \right) (\Delta_{t, \mu}\varphi_{t})dV_{g_{\varphi_{t}, \mu}}(q) \\
&= \int_{S} \left( G_{t, \mu}(p, q) + \gamma(n, 0) \frac{\pi^{2}}{t^{n+1}V} \right) (2\Box_{t, \mu}\varphi_{t})dV_{g_{\varphi_{t}, \mu}}(q) \\
&\geq -\frac{2n}{t}\gamma(n, 0)\frac{\pi^{2}}{t^{n+1}V}\frac{t^{n+1}V}{n!} \\
&= -2n\gamma(n, 0)\frac{\pi^{2}}{t(n!)}. 
\end{align*}
Thus we obtain
\begin{equation}\label{sekiya_lower_bound}
\varphi_{t}(x) \geq \frac{1}{V}\int_{S}\varphi_{t}(d\eta_{\varphi_{t}})^{n} \wedge \eta - 2n \gamma(n, 0)\frac{\pi^{2}}{t(n!)}. 
\end{equation}
This gives the desired estimate 
\begin{align}\label{sekiya_osc}
{\rm osc}_{S}\varphi_{t} 
&= \sup_{S}\varphi_{t} - \inf_{S}\varphi_{t} \notag \\
&\leq I(0, \varphi_{t}) + 2n\left( KV_{0} + \gamma(n, 0)\frac{\pi^{2}}{t(n!)} \right). 
\end{align}
By applying the inequality (\ref{sekiya_osc}), we can prove directly the uniqueness of Sasaki-Einstein metrics up to the action of the identity component of the automorphism group for the transverse holomorphic structure. 

The deformation of a Sasakian structure defined by (\ref{D_contact}) and (\ref{D_Reeb}) is called a {\it $D$-homothetic deformation}. 
By applying $D$-homothetic deformations to complete Sasaki manifolds with positive transverse Ricci curvature, Hasegawa and Seino shows in \cite{HS} that such Sasaki manifolds are compact with finite fundamental group. 
Although this method does not lead to a diameter bound, it is applicable to $C^{0}$-estimates for solutions of (\ref{MA2}). 

\appendix
\section{Some functionals on the space of Sasakian metrics}
In this appendix, we introduce some functionals on the space of K${\rm \ddot a}$hler potentials for the transverse K${\rm \ddot a}$hler structure. 
Let $(S, g)$ be a $(2n + 1)$-dimensional Sasaki manifold with Sasakian structure $\mathcal{S} = (g, \xi, \eta, \Phi)$. We assume that $c_{1}^{B}(S) > 0$ and $c_{1}(D) = 0$. 
Let $V_{0} = \frac{1}{n!}\int_{S}(\frac{1}{2}d\eta)^{n} \wedge \eta$ be the volume of $(S, g)$ and put $V := n! V_{0}$. 

\subsection{Functionals $L_{\eta}$ and $M_{\eta}$}
For each $\varphi^{\prime}, \varphi^{\prime \prime} \in \mathscr{H}$, we put 
\begin{align*}
L(\varphi^{\prime}, \varphi^{\prime \prime}) 
&:= \frac{1}{V}\int_{a}^{b}dt\int_{S}\dot{\varphi_{t}}(\frac{1}{2}d\eta_{\varphi_{t}})^{n} \wedge \eta_{\varphi_{t}} , \\
M(\varphi^{\prime}, \varphi^{\prime \prime}) 
&:= -\frac{1}{V}\int_{a}^{b}dt\int_{S}\dot{\varphi_{t}}(s^{T}(\varphi_{t}) - n(2n + 2))(\frac{1}{2}d\eta_{\varphi_{t}})^{n} \wedge \eta_{\varphi_{t}}, 
\end{align*}
where $\{\varphi_{t}\ |\ t \in [a, b]\}$ is an arbitrary piecewise smooth path in $\mathscr{H}$ such that $\varphi_{a} = \varphi^{\prime}$ and $\varphi_{b} = \varphi^{\prime \prime}$ and $s^{T}(\varphi_{t})$ is the transverse scalar curvature for the transverse K${\rm \ddot a}$hler metric $\omega^{T}_{\varphi_{t}} = \omega^{T} + \sqrt{-1}\partial_{B}\bar \partial_{B} \varphi_{t}$. 
The functionals were defined by Futaki-Ono-Wang \cite{FOW}, and proved that the definition of the functions is independent of choice of the path $\{\varphi_{t}\ |\ t \in [a, b]\}$. 
Put $L_{\eta}(\varphi) := L(0, \varphi)$ and $M_{\eta}(\varphi) := M(0, \varphi)$. 
Then the critical points of $M_{\eta}$ give Sasaki-Einstein metrics which are compatible with the initial metric $g$. 
As in the K${\rm \ddot a}$hler geometry, the functionals $L$ and $M$ have the following properties. 
The proofs are given by the same arguments in K${\rm \ddot a}$hler geometry. 
A functional $H: \mathscr{H} \times \mathscr{H} \to \mathbb{R}$ is said to satisfy {\it the 1-cocycle condition} if 
\begin{enumerate}
\item $H(\varphi^{\prime}, \varphi^{\prime \prime}) + H(\varphi^{\prime \prime}, \varphi^{\prime}) = 0$, and 
\item $H(\varphi^{\prime}, \varphi^{\prime \prime}) + H(\varphi^{\prime \prime}, \varphi^{\prime \prime \prime}) + H(\varphi^{\prime \prime \prime}, \varphi^{\prime}) = 0$
\end{enumerate}
for each $\varphi^{\prime}, \varphi^{\prime \prime}, \varphi^{\prime \prime \prime} \in \mathscr{H}$. 
\begin{prop}\label{fundamental} \ 
\begin{enumerate}
\item The functionals $L$ and $M$ satisfy the 1-cocycle condition. 
\item $L(\varphi^{\prime}, \varphi^{\prime \prime} + C) = L(\varphi^{\prime} - C, \varphi^{\prime \prime}) = L(\varphi^{\prime}, \varphi^{\prime \prime}) + C$ ({\rm resp. }$M(\varphi^{\prime} + C, \varphi^{\prime \prime} + C) = M(\varphi^{\prime}, \varphi^{\prime \prime})$)
for each $\varphi^{\prime}, \varphi^{\prime \prime} \in \mathscr{H}$ and $C \in \mathbb{R}$. 
\end{enumerate}
\end{prop}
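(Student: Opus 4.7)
My plan is to derive both assertions from the well-definedness (path-independence) of $L$ and $M$, which is the substantive content; the cocycle condition and translation property then follow formally. Path-independence itself is established exactly as in the K\"ahler case: given a two-parameter family $\varphi_{s,t}$, one differentiates the defining integral in $s$ and, using $d\eta_{\varphi} = 2 \omega^{T}_{\varphi}$, $d\omega^{T}_{\varphi} = 0$, and repeated integration by parts on $S$ (Fubini plus Stokes plus the self-adjointness of $\Box_{B}$ against the measure $(\omega^{T}_{\varphi})^{n}\wedge\eta_{\varphi}$ on basic functions), one checks that the $s$-derivative vanishes. This is the main technical point and is carried out in \cite{FOW}, so I take it as given.

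For (1), the constant path $\varphi_{t}\equiv\varphi^{\prime}$ has $\dot\varphi_{t}\equiv 0$, so $L(\varphi^{\prime},\varphi^{\prime}) = 0 = M(\varphi^{\prime},\varphi^{\prime})$. Given any path from $\varphi^{\prime}$ to $\varphi^{\prime\prime}$, its time reversal has $\dot\varphi_{t}$ of opposite sign, so $L(\varphi^{\prime\prime},\varphi^{\prime}) = -L(\varphi^{\prime},\varphi^{\prime\prime})$, which is the two-term cocycle identity. For three points, concatenate paths $\varphi^{\prime}\to\varphi^{\prime\prime}\to\varphi^{\prime\prime\prime}\to\varphi^{\prime}$ into a loop based at $\varphi^{\prime}$; by additivity of the integral over the three time segments, the evaluation of $L$ along this loop equals $L(\varphi^{\prime},\varphi^{\prime\prime}) + L(\varphi^{\prime\prime},\varphi^{\prime\prime\prime}) + L(\varphi^{\prime\prime\prime},\varphi^{\prime})$, while by path-independence it equals $L(\varphi^{\prime},\varphi^{\prime}) = 0$. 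The identical argument works for $M$.

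For (2), observe first that adding a constant $C$ to a potential leaves every geometric quantity invariant: since $d_{B}^{c} C = 0$ we have $\eta_{\varphi + C} = \eta_{\varphi}$, $\omega^{T}_{\varphi + C} = \omega^{T}_{\varphi}$, and therefore $s^{T}(\varphi + C) = s^{T}(\varphi)$. Evaluating $L(\varphi^{\prime\prime},\varphi^{\prime\prime}+C)$ along the path $\varphi_{t} := \varphi^{\prime\prime} + tC$ with $t\in[0,1]$ gives $\dot\varphi_{t}\equiv C$ and volume form constantly equal to $(\omega^{T}_{\varphi^{\prime\prime}})^{n}\wedge\eta_{\varphi^{\prime\prime}}$, yielding $L(\varphi^{\prime\prime},\varphi^{\prime\prime}+C) = V^{-1}\cdot C\cdot V = C$. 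Combined with the cocycle identity this gives $L(\varphi^{\prime},\varphi^{\prime\prime}+C) = L(\varphi^{\prime},\varphi^{\prime\prime}) + C$, and the analogous computation $L(\varphi^{\prime}-C,\varphi^{\prime}) = C$ handles the remaining equality. For $M$, translating a path $\varphi_{t}$ from $\varphi^{\prime}$ to $\varphi^{\prime\prime}$ by the constant $C$ produces a path from $\varphi^{\prime}+C$ to $\varphi^{\prime\prime}+C$ with pointwise equal $\dot\varphi_{t}$, $s^{T}(\varphi_{t})$, and volume form, so the integrands coincide and $M(\varphi^{\prime}+C,\varphi^{\prime\prime}+C) = M(\varphi^{\prime},\varphi^{\prime\prime})$. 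The only real obstacle in the whole argument is path-independence, which is inherited from \cite{FOW}; beyond that everything is formal.
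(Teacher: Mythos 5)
Your proof is correct and follows exactly the route the paper intends: the paper gives no argument for Proposition \ref{fundamental} beyond citing path-independence from \cite{FOW} and the analogous K\"ahler computations in \cite{Mabuchi2}, and your derivation (loop concatenation for the cocycle identity, the linear path $\varphi''+tC$ together with the invariance of $\int_{S}(\frac{1}{2}d\eta_{\varphi})^{n}\wedge\eta_{\varphi}$ for the translation property) is precisely that standard argument. No gaps.
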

Hence we can define the mapping $M$ (denoted by the same $M$) on $\mathscr{S}(g)$ by 
\begin{equation*}
M(g^{\prime}, g^{\prime \prime}) := M(\varphi^{\prime}, \varphi^{\prime}), 
\end{equation*}
where $\varphi^{\prime}, \varphi^{\prime}$ are basic functions such that $g_{\varphi^{\prime}} = g^{\prime}$ and $g_{\varphi^{\prime}} = g^{\prime \prime}$. 
We call the functional $M_{\eta}$ on the space of Sasaki metrics which have the same basic K${\rm \ddot a}$hler class as the initial metric $g$ defined by 
\begin{equation*}
M_{\eta}(g^{\prime}) := M(g, g^{\prime}) 
\end{equation*}
{\it the transverse K-energy map} of the Sasaki manifold $(S, g)$. 
\subsection{The functionals $I_{\eta}$ and $J_{\eta}$}
For each $\varphi^{\prime}, \varphi^{\prime \prime} \in \mathscr{H}$, we put 
\begin{align*}
I(\varphi^{\prime}, \varphi^{\prime \prime}) &:= \frac{1}{V}\int_{S}(\varphi^{\prime \prime} - \varphi^{\prime})\left( (\frac{1}{2}d\eta_{\varphi^{\prime}})^{n} \wedge \eta_{\varphi^{\prime}} - (\frac{1}{2}d\eta_{\varphi^{\prime \prime}})^{n} \wedge \eta_{\varphi^{\prime \prime}}\right), \\
J(\varphi^{\prime}, \varphi^{\prime \prime}) &:= \frac{1}{V}\int_{a}^{b}dt \int_{S}\dot{\varphi_{t}}\left( (\frac{1}{2}d\eta_{\varphi^{\prime}})^{n} \wedge \eta_{\varphi^{\prime}} - (\frac{1}{2}d\eta_{\varphi_{t}})^{n} \wedge \eta_{\varphi_{t}}\right), 
\end{align*}
where $\{\varphi_{t}\ |\ t \in [a, b]\}$ is an arbitrary piecewise smooth path in $\mathscr{H}$ such that $\varphi_{a} = \varphi^{\prime}$ and $\varphi_{b} = \varphi^{\prime \prime}$. 
The following lemma is proved by direct calculations. 
\begin{prop}\label{IJ1}\
\begin{enumerate}
\item $J(\varphi^{\prime}, \varphi^{\prime \prime}) = \frac{1}{V}\int_{S}(\varphi^{\prime \prime} - \varphi^{\prime})(\frac{1}{2}d\eta_{\varphi^{\prime}})^{n} \wedge \eta_{\varphi^{\prime}} - L(\varphi^{\prime}, \varphi^{\prime \prime})$. 
In particular, the definition of $J$ is independent of choice of the path $\{\varphi_{t}\ |\ t \in [a, b]\}$. 
\item $I(\varphi^{\prime} + C, \varphi^{\prime \prime} + C) = I(\varphi^{\prime}, \varphi^{\prime \prime})$ and $J(\varphi^{\prime} + C, \varphi^{\prime \prime} + C) = J(\varphi^{\prime}, \varphi^{\prime \prime})$ for each $\varphi^{\prime}, \varphi^{\prime \prime} \in \mathscr{H}$ and constant $C \in \mathbb{R}$. 
\end{enumerate}
\end{prop}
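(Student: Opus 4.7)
The plan is to verify both statements by direct manipulation of the defining integrals, using only (a) the fundamental theorem of calculus in $t$, (b) the path-independence of $L$ supplied by Proposition \ref{fundamental}, and (c) the fact that $d_{B}^{c}$ annihilates constants.

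For part (1), I would split the integrand in the definition of $J(\varphi^{\prime},\varphi^{\prime\prime})$ along the path $\{\varphi_{t}\}_{t\in[a,b]}$ into the two summands
\[
\frac{1}{V}\int_{a}^{b}\!\!dt\int_{S}\dot\varphi_{t}\,(\tfrac{1}{2}d\eta_{\varphi^{\prime}})^{n}\wedge\eta_{\varphi^{\prime}}\quad\text{and}\quad -\frac{1}{V}\int_{a}^{b}\!\!dt\int_{S}\dot\varphi_{t}\,(\tfrac{1}{2}d\eta_{\varphi_{t}})^{n}\wedge\eta_{\varphi_{t}}.
\]
In the first summand the volume form $(\tfrac{1}{2}d\eta_{\varphi^{\prime}})^{n}\wedge\eta_{\varphi^{\prime}}$ is independent of $t$, so Fubini together with $\int_{a}^{b}\dot\varphi_{t}\,dt=\varphi^{\prime\prime}-\varphi^{\prime}$ collapses it to $\frac{1}{V}\int_{S}(\varphi^{\prime\prime}-\varphi^{\prime})(\tfrac{1}{2}d\eta_{\varphi^{\prime}})^{n}\wedge\eta_{\varphi^{\prime}}$. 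The second summand is exactly $-L(\varphi^{\prime},\varphi^{\prime\prime})$ by definition. This gives the claimed formula; and since the boundary term depends only on $\varphi^{\prime},\varphi^{\prime\prime}$ while $L$ is already known to be well-defined (path-independent) by Proposition \ref{fundamental}, the formula shows $J$ itself is path-independent.

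For part (2), the decisive observation is that, for any constant $C$, one has $\eta_{\varphi+C}=\eta+2d_{B}^{c}(\varphi+C)=\eta+2d_{B}^{c}\varphi=\eta_{\varphi}$, since $d_{B}^{c}=\tfrac{\sqrt{-1}}{2}(\bar\partial_{B}-\partial_{B})$ annihilates constants. Consequently $(\tfrac{1}{2}d\eta_{\varphi+C})^{n}\wedge\eta_{\varphi+C}=(\tfrac{1}{2}d\eta_{\varphi})^{n}\wedge\eta_{\varphi}$ pointwise. For $I$, the prefactor $(\varphi^{\prime\prime}+C)-(\varphi^{\prime}+C)=\varphi^{\prime\prime}-\varphi^{\prime}$ is also unchanged, so $I(\varphi^{\prime}+C,\varphi^{\prime\prime}+C)=I(\varphi^{\prime},\varphi^{\prime\prime})$ follows at once. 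For $J$, given any piecewise smooth path $\{\varphi_{t}\}$ from $\varphi^{\prime}$ to $\varphi^{\prime\prime}$, the shifted path $\{\varphi_{t}+C\}$ joins $\varphi^{\prime}+C$ to $\varphi^{\prime\prime}+C$, satisfies $\dot{(\varphi_{t}+C)}=\dot\varphi_{t}$, and produces the same volume forms, so the integral defining $J$ is unchanged.

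No substantial obstacle is expected: the entire argument is bookkeeping, and the only nontrivial input is path-independence of $L$ (already granted) and the elementary identity $d_{B}^{c}C=0$. The slight subtlety worth flagging is that path-independence of $J$ is \emph{derived from} part (1) rather than proved directly, so the formula in part (1) should be established first and then invoked.
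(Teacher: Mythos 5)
Your proof is correct and is exactly the ``direct calculation'' the paper has in mind (the paper itself omits the argument, deferring to the K\"ahler case in \cite{Mabuchi2}): splitting the $J$-integrand and invoking path-independence of $L$ for part (1), and using $d_{B}^{c}C=0$ so that $\eta_{\varphi+C}=\eta_{\varphi}$ for part (2). The only nitpick is that path-independence of $L$ is asserted in the text preceding Proposition \ref{fundamental} (attributed to \cite{FOW}) rather than being part of that proposition's statement, but this does not affect the argument.
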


By Proposition \ref{IJ1}, we can define the mappings $I$ and $J$ (denoted by the same notations) on $\mathscr{S}(g)$ by 
\begin{equation*}
I(g^{\prime}, g^{\prime \prime}) := I(\varphi^{\prime}, \varphi^{\prime \prime})\ \text{and}\ 
J(g^{\prime}, g^{\prime \prime}) := J(\varphi^{\prime}, \varphi^{\prime \prime}), 
\end{equation*}
where $\varphi^{\prime}, \varphi^{\prime}$ are basic functions such that $g_{\varphi^{\prime}} = g^{\prime}$ and $g_{\varphi^{\prime}} = g^{\prime \prime}$. 
Put 
\begin{equation*}
I_{\eta}(g^{\prime}) := I(g, g^{\prime}),\ 
J_{\eta}(g^{\prime}) := J(g, g^{\prime}) 
\end{equation*}
for each $g^{\prime} \in \mathscr{S}(g)$. 
The functional $J$ does not satisfy the $1$-cocycle condition in general, but it satisfies the following equality; 
\begin{align*}
&J(\varphi^{\prime}, \varphi^{\prime \prime}) + J(\varphi^{\prime \prime}, \varphi^{\prime \prime \prime}) \\
&\qquad=  J(\varphi^{\prime}, \varphi^{\prime \prime \prime})- \frac{1}{V}\int_{S}(\varphi^{\prime \prime \prime} - \varphi^{\prime \prime})\left( (\frac{1}{2}d\eta_{\varphi^{\prime}})^{n} \wedge \eta_{\varphi^{\prime}} - (\frac{1}{2}d\eta_{\varphi^{\prime \prime}})^{n} \wedge \eta_{\varphi^{\prime \prime}} \right). 
\end{align*}

Put $I_{\eta}(\varphi) := I(0, \varphi)$ and $J_{\eta}(\varphi) := J(0, \varphi)$ for each $\varphi \in \mathscr{H}$. 
We now take an arbitrary smooth path $\{ \varphi_{t}\ |\ t \in [a, b] \}$ in $\mathscr{H}$. 
Then by a simple calculation we have 
\begin{equation}\label{differential}
\frac{d}{dt}\left( I_{\eta}(\varphi_{t}) - J_{\eta}(\varphi_{t}) \right) 
= \frac{1}{V}\int_{S}\varphi_{t} \Box_{t} \dot{\varphi_{t}}(\frac{1}{2}d\eta_{\varphi_{t}})^{n} \wedge (\eta_{\varphi_{t}}), 
\end{equation}
where $\Box_{t} = \frac{1}{2}\Delta_{B, t}$ is the basic complex Laplacian with respect to the Sasakian metric $g_{\varphi_{t}}$. 
The following properties of $I_{\eta}$ and $J_{\eta}$ are essential to obtain the $C^{0}$-estimate for the solutions of equation (\ref{MA2}). 
\begin{prop}\label{functional_inequality}
$I_{\eta}, I_{\eta} - J_{\eta}, J_{\eta}$ are non negative functionals and satisfies the following inequality; 
\begin{equation*}
0 \leq I_{\eta}(\varphi) \leq (n + 1)(I_{\eta}(\varphi) - J_{\eta}(\varphi)) \leq n I_{\eta}(\varphi). 
\end{equation*}
\end{prop}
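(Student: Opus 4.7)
The plan is to derive explicit formulas that exhibit $VI_\eta(\varphi)$, $V(I_\eta - J_\eta)(\varphi)$, and $VJ_\eta(\varphi)$ as non-negative linear combinations of the manifestly non-negative quantities
\[
T_k(\varphi) := \sqrt{-1}\int_S \partial_B\varphi \wedge \bar\partial_B\varphi \wedge (\tfrac{1}{2}d\eta)^{n-1-k}\wedge (\tfrac{1}{2}d\eta_\varphi)^k \wedge \eta, \qquad k=0,1,\dots,n-1.
\]
Each $T_k$ is non-negative because $\sqrt{-1}\partial_B\varphi \wedge \bar\partial_B\varphi$ is a semi-positive basic $(1,1)$-form and $(\tfrac12 d\eta)^{n-1-k}\wedge(\tfrac12 d\eta_\varphi)^k \wedge \eta$ is a positive $(2n+1)$-form. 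Once the two target identities
\[
V\, I_\eta(\varphi) = \sum_{k=0}^{n-1} T_k(\varphi), \qquad V(I_\eta - J_\eta)(\varphi) = \sum_{k=0}^{n-1} \tfrac{k+1}{n+1}\, T_k(\varphi)
\]
are in hand, all four claimed inequalities follow immediately from the elementary bounds $\tfrac{1}{n+1}\le \tfrac{k+1}{n+1},\ \tfrac{n-k}{n+1}\le \tfrac{n}{n+1}$ for $0\le k\le n-1$.

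The first key input is a Sasaki analogue of integration by parts: for any $d_B$-closed basic $(n-1,n-1)$-form $\Omega$ and any $\varphi \in C_B^\infty(S)$,
\[
\sqrt{-1}\int_S \varphi \, \partial_B \bar\partial_B \varphi \wedge \Omega \wedge \eta = -\sqrt{-1}\int_S \partial_B\varphi \wedge \bar\partial_B\varphi \wedge \Omega \wedge \eta.
\]
This comes from applying Stokes's theorem to $d(\varphi\, \bar\partial_B\varphi \wedge \Omega \wedge \eta)$ and using $d\Omega = 0$, $d\varphi = \partial_B\varphi + \bar\partial_B\varphi$, together with the crucial observation that the would-be correction term $\bar\partial_B\varphi \wedge \Omega \wedge d\eta = 2\bar\partial_B\varphi \wedge \Omega \wedge \tfrac12 d\eta$ is a basic form of degree $2n+1$ on a $(2n+1)$-dimensional Sasaki manifold and therefore vanishes for transverse-dimension reasons. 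The same degree count shows $(\tfrac12 d\eta)^n \wedge d_B^c\varphi = (\tfrac12 d\eta_\varphi)^n \wedge d_B^c\varphi = 0$, so the integrand of $I_\eta(\varphi)$ reduces via $(\tfrac12 d\eta_\varphi)^n \wedge \eta_\varphi - (\tfrac12 d\eta)^n \wedge \eta = ((\tfrac12 d\eta_\varphi)^n - (\tfrac12 d\eta)^n)\wedge \eta$; expanding the right-hand side by the telescoping formula $(\tfrac12 d\eta_\varphi)^n - (\tfrac12 d\eta)^n = \sqrt{-1}\partial_B\bar\partial_B\varphi \wedge \sum_{k=0}^{n-1}(\tfrac12 d\eta)^{n-1-k}\wedge (\tfrac12 d\eta_\varphi)^k$ and applying the integration by parts term by term yields $V I_\eta(\varphi) = \sum_{k=0}^{n-1} T_k(\varphi)$.

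For $J_\eta$, I will use the path $\varphi_s := s\varphi$ for $s \in [0,1]$; Proposition \ref{IJ1}(1) together with differentiation gives $\frac{d}{ds}J_\eta(\varphi_s) = \frac{1}{V}\int_S \varphi((\tfrac12 d\eta)^n - (\tfrac12 d\eta_{s\varphi})^n)\wedge \eta$. The same telescoping plus integration by parts as above converts this into $\tfrac{s}{V}\sum_{m=0}^{n-1}\sqrt{-1}\int_S \partial_B\varphi\wedge\bar\partial_B\varphi \wedge (\tfrac12 d\eta_{s\varphi})^m \wedge (\tfrac12 d\eta)^{n-1-m}\wedge \eta$; expanding $(\tfrac12 d\eta_{s\varphi})^m$ by the binomial theorem, integrating in $s$ from $0$ to $1$, and rearranging the resulting double sum via the hockey-stick identity together with the algebraic identity $(n-p)\binom{n}{p+1} - (p+1)\binom{n}{p+2} = \binom{n+1}{p+2}$ produces the required formula $VJ_\eta(\varphi) = \sum_{k=0}^{n-1}\tfrac{n-k}{n+1}T_k(\varphi)$, from which $V(I_\eta - J_\eta) = \sum_k \tfrac{k+1}{n+1}T_k$ follows by subtraction. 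The main obstacle is precisely this last combinatorial rearrangement: the naive expansion of $J_\eta$ produces a sum in the non-sign-definite quantities $\sqrt{-1}\int_S \partial_B\varphi \wedge \bar\partial_B\varphi \wedge (\sqrt{-1}\partial_B\bar\partial_B\varphi)^p \wedge (\tfrac12 d\eta)^{n-1-p}\wedge \eta$, and the substance of the proof lies in verifying that, after changing basis to $\{T_k\}$, the coefficients come out non-negative with exactly the values $\tfrac{n-k}{n+1}$. Apart from this bookkeeping, every step is formally parallel to the classical K\"ahler Aubin inequality, the only Sasaki-specific feature being the degree count that kills the correction terms arising from $d\eta \ne 0$.
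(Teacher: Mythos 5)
Your argument is correct and is exactly the standard Aubin/Bando--Mabuchi route that the paper itself invokes without details (it only says the proposition ``can be obtained by a similar way as in K\"ahler cases''): the identities $VI_{\eta}(\varphi)=\sum_{k}T_{k}(\varphi)$ and $V(I_{\eta}-J_{\eta})(\varphi)=\sum_{k}\frac{k+1}{n+1}T_{k}(\varphi)$ are the transverse versions of the classical ones, and your two Sasaki-specific degree counts (killing $\bar\partial_{B}\varphi\wedge\Omega\wedge d\eta$ and $(\frac12 d\eta_{\varphi})^{n}\wedge d_{B}^{c}\varphi$) are precisely what is needed to justify the reduction to the K\"ahler computation. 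The combinatorics also check out: $\sum_{k=j}^{n-1}(n-k)\binom{k}{j}=\binom{n+1}{j+2}=(j+2)^{-1}(n+1)\binom{n}{j+1}$ confirms your coefficient $\frac{n-k}{n+1}$ for $VJ_{\eta}$, from which all four inequalities follow as you state.
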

Propositions \ref{fundamental}, \ref{IJ1} and \ref{functional_inequality} can be obtained by a similar way as in K${\rm \ddot a}$hler cases (see \cite{Mabuchi2} for example). 

% BibTeX users please use one of
%\bibliographystyle{spbasic}      % basic style, author-year citations
%\bibliographystyle{spmpsci}      % mathematics and physical sciences
%\bibliographystyle{spphys}       % APS-like style for physics
%\bibliography{}   % name your BibTeX data base

% Non-BibTeX users please use

\end{document}